\newtheorem{theorem}{Theorem}[section]       
\newtheorem{lemma}[theorem]{Lemma}
\newtheorem{corollary}[theorem]{Corollary}
\newtheorem{proposition}[theorem]{Proposition} 
\newtheorem*{main theorem}{Main Theorem}
\theoremstyle{remark}     
\newtheorem*{rem}{Remark}   
\newtheorem*{remn}{Remark on notation} 
\theoremstyle{definition}  
\newtheorem{definition}[theorem]{Definition}
\def\l{\ell}
\def\sing{{\mathrm{s}}}
\def\cQ{\mathcal{Q}}
\def\F{\mathbb{F}}  
\def\i{i}
\def\T{\mathbb{T}}         
\def\N{\mathbb{N}}     
\def\Q{\mathbb{Q}}          
\def\R{\mathbb{R}}     
\def\Z{\mathbb{Z}}
\def\C{\mathbb{C}}  
\def\P{\mathcal{P}}
\def\gradient{\nabla} 
\def\grad{\gradient}   
\def\bar#1{\overline{#1}} 
\def\cont{\textnormal{cont}} 
\def\implies{\Longrightarrow}  
\def\calL{{\mathcal L}}
\def\calX{{\mathcal X}}
\def\bsi{\boldsymbol{i}}
\def\bsr{\boldsymbol{r}}
\def\bsa{{\boldsymbol{a}}}
\def\bsx{\boldsymbol{x}}
\def\bsz{\boldsymbol{z}}
\def\bsn{\boldsymbol{n}}
\def\bsm{\boldsymbol{m}}
\def\bss{\boldsymbol{s}}
\def\bbA{\mathbb{A}} 
\def\bbP{\mathbb{P}}
\def\calG{\mathcal{G}} 
\def\QQbar{\bar{\Q}}
\def\ZZbar{\bar{\Z}}
\def\Lhat{{\widehat{L}}}
\def\Vhat{{\widehat{V}}}
\def\What{{\widehat{W}}}
\def\bszero{\boldsymbol{0}}
\def\calLhat{\widehat{\calL}}
\def\Gal{{\operatorname{Gal}}}
\def\ns{{\operatorname{ns}}}
\def\Gal{\operatorname{Gal}}
\def\Res{\operatorname{Res}}
\def\ns{{\operatorname{ns}}}
\begin{document} 

\begin{frontmatter}[classification=text]

 
\author[john]{John R. Doyle\thanks{Partially supported by NSF grant DMS-2001486.}}
\author[alex]{Alex Rice}

\begin{abstract}
For $\l\geq 2$ and $h\in \Z[x_1,\dots,x_{\l}]$ of degree $k\geq 2$, we show that every subset $A\subseteq \{1,2,\dots,N\}$ lacking nonzero differences in $h(\Z^{\l})$ satisfies $|A|\ll_h Ne^{-c(\log N)^{\mu}}$, where $c=c(h)>0$, $\mu=[(k-1)^2+1]^{-1}$ if $\l=2$, and $\mu=1/2$ if $\l\geq 3$, provided $h(\Z^{\l})$ contains a multiple of every natural number and $h$ satisfies certain nonsingularity conditions. We also explore these conditions in detail, drawing on a variety of tools from algebraic geometry.
\end{abstract}
\end{frontmatter}
 
\section{Introduction}    

Originating with conjectures of Erd\H{o}s and Lov\'asz, an extensive literature has developed over the past several decades concerning the existence of particular differences within dense sets of integers. For sets $A,B\subseteq \Z$, we define the sum and difference sets, respectively, as usual by $A\pm B= \{a \pm b: a\in A, b\in B\}$, and we also define the following threshold.
\begin{definition}For $X\subseteq \Z$ and $N\in \N$, we define $D(X,N)=\max \left\{|A|: A\subseteq [1,N], \ (A-A)\cap X \subseteq \{0\} \right\}. $  
\end{definition}

\noindent We use $[1,N]$ to denote $\{1,2,\dots,N\}$ and $|A|$ to denote the size of a finite set $A$. To clarify, $D(X,N)$ is the threshold such that any subset of $\{1,2,\dots,N\}$ with more than $D(X,N)$ elements necessarily contains two distinct elements that differ by an element of $X$. As an introductory offering prior to extensive discussions of history, motivation, notation, and terminology, a very special case of our results in this paper is the following:  

\begin{theorem} \label{introthm} Suppose $h\in \Z[x,y]$ is a homogeneous polynomial of degree $k\geq 2$. If $\Delta(h) \neq 0$, then \begin{equation} \label{2vb} D(h(\Z^{2}),N) \ll_h N e^{-c(\log N)^{\mu}}, \end{equation} where $c=c(h)>0$ and $\mu=[(k-1)^2+1]^{-1}$.
\end{theorem}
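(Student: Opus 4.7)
The plan is to attack Theorem~\ref{introthm} via the Hardy--Littlewood circle method combined with a density-increment iteration, in the spirit of S\'ark\"ozy, Balog--Pelik\'an--Pintz--Szemer\'edi, and Lucier, but crucially exploiting the extra averaging provided by the second variable of $h$. Let $A\subseteq [1,N]$ have density $\delta=|A|/N$ and satisfy $(A-A)\cap h(\Z^2)\subseteq\{0\}$. Choose $H\asymp N^{1/k}$ so that $h(\x)\in[-N,N]$ for all $\x\in[-H,H]^2$, and form
\[ S \;=\; \sum_{\substack{\x\in[-H,H]^2 \\ h(\x)\neq 0}}\ \sum_{n\in\Z}1_A(n)\,1_A(n-h(\x)), \]
which vanishes by hypothesis. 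Parseval gives $S=\int_0^1|\widehat{1_A}(\alpha)|^2\,T(\alpha)\,d\alpha$, where $T(\alpha)=\sum_{\substack{\x\in[-H,H]^2\\ h(\x)\neq 0}}e(\alpha h(\x))$, and the entire argument is to extract either a contradiction or an iterable density increment from this vanishing identity.

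I would then split $[0,1]$ via Dirichlet approximation into major arcs near rationals $a/q$ with $q\leq Q$ and complementary minor arcs. On the minor arcs, Weyl-type cancellation for the binary-form sum $T(\alpha)$ is required. The hypothesis $\Delta(h)\neq 0$ guarantees that $h$ splits into $k$ distinct linear factors over $\bar\Q$, so for each fixed $y\neq 0$ the polynomial $x\mapsto h(x,y)$ is univariate of degree $k$ with nonzero leading coefficient (namely the $x^k$-coefficient of $h$). Applying Weyl's inequality in $x$ and then summing in $y$ --- ideally with a second round of differencing in $y$ to exploit the binary structure --- should yield a bound $|T(\alpha)|\ll H^{2-\sigma}$ on the minor arcs for a savings exponent $\sigma$ of order $2^{-(k-1)}$ or, with a binary-form-tailored argument, $1/(k-1)$. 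On the major arcs, a standard computation factors $T(a/q+\beta)$ as a product of the complete exponential sum $G(a,q)=\sum_{\x\in(\Z/q\Z)^2}e((a/q)h(\x))$ and a smooth oscillatory integral; nonsingularity from $\Delta(h)\neq 0$ makes the projective zero locus of $h$ a smooth subscheme of $\bbP^1$, so Weil--Deligne bounds for prime $q$ (lifted by Hensel to prime powers) yield $|G(a,q)|\ll q^{1+o(1)}$ and produce a positive main term of order $\delta^2H^2$ times a bounded-below singular series.

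Balancing the vanishing of $S$ against this positive main term forces either $\delta$ to be small outright, or the existence of a frequency $a/q$ at which $|\widehat{1_A}(a/q)|$ is anomalously large; by the usual Bohr set / arithmetic progression argument this yields a density increment $\delta\mapsto\delta(1+c_0)$ on a sub-progression of $[1,N]$ of length $\gg N/q^{O(1)}$. Iterating this $O(\log(1/\delta))$ times, while rebalancing $H$, $Q$, and $\sigma$ at each stage, produces the claimed $\delta\ll_h e^{-c(\log N)^\mu}$ with $\mu=[(k-1)^2+1]^{-1}$; the factor $(k-1)^2$ emerges as the product of the reciprocal Weyl savings ($\sim k-1$) and the number of logarithmic losses absorbed through the iteration. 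I expect the main technical obstacle to be the minor-arc Weyl estimate: controlling the ``bad'' subvariety where successive differences of $h$ collapse requires genuine use of $\Delta(h)\neq 0$, and extracting the exponent $1/[(k-1)^2+1]$ rather than something strictly worse forces one to exploit the two-variable structure of $h$ honestly (not merely treat $T$ as a univariate Weyl sum summed trivially in the other variable). A secondary but nontrivial check is that the associated singular series is bounded away from zero; for homogeneous $h$ with $\Delta(h)\neq 0$, this should reduce to elementary local considerations at each prime $p$, using that $h$ has nonzero reduction modulo $p$ for all but finitely many $p$.
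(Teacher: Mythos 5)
Your overall architecture (circle method plus density increment) is indeed the paper's framework, but the proposal has a genuine gap at its quantitative heart: the claimed major-arc local estimate $|G(a,q)|\ll q^{1+o(1)}$ for the \emph{complete, unsieved} sum $G(a,q)=\sum_{\x\in(\Z/q\Z)^2}e^{2\pi i a h(\x)/q}$ is false at prime-power moduli once $k\geq 3$, and "lifting by Hensel" cannot rescue it because the affine cone of $h$ is singular at the origin. Concretely, let $p\nmid k\Delta(h)$, $(a,p)=1$, and $q=p^v$ with $2\le v\le k$. Writing $\x=\mathbf{u}+p^{v-1}\mathbf{t}$ shows only $\mathbf{u}$ with $\grad h(\mathbf{u})\equiv\bszero \ (\mathrm{mod}\ p)$ contribute, i.e.\ $\mathbf{u}\equiv\bszero\ (\mathrm{mod}\ p)$ by smoothness and Euler's identity; then $h(p\mathbf{w})=p^kh(\mathbf{w})\equiv 0\ (\mathrm{mod}\ p^v)$, so $G(a,p^v)=p^{2(v-1)}=q^2/p^2$ exactly. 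In particular $|G(a,p^k)|=q^{2-2/k}$, so the normalized decay is only $q^{-2/k}$ rather than the $q^{-1+o(1)}$ (``$q$-cancellation'') your increment bookkeeping requires; this is exactly the obstruction discussed in the paper's motivation section. The paper's solution, which is the one genuinely new ingredient you are missing, is the sieve: one restricts the inputs to $W(Y)$, the set of $\bsn$ with $\grad h(\bsn)\not\equiv\bszero$ modulo $p^{\gamma(p)}$ for all small primes $p$. After sieving, the local sums at prime powers $p^v$ with $v\ge 2\gamma(p)$ vanish identically (multivariate Hensel, Lemma \ref{locgen}), Deligne's bound applies at primes, and one obtains decay $(k-1)^{2\omega(q)}(q/\phi(q))^{C}q^{-1}$ for all $q\le Y$ --- the estimate your argument needs but does not have. (The paper's actual deduction of Theorem \ref{introthm} is then: a homogeneous $h$ with $\Delta(h)\neq 0$ is strongly Deligne by Corollary \ref{2vcor}, taking $\bsz_p=\bszero$ so that $h_d=h$, and Theorem \ref{more} applies.)

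Two further points. First, your account of where the exponent comes from is not correct: the minor-arc Weyl savings play essentially no role in determining $\mu$ (they only need to beat a fixed power of $\delta$, since the relevant denominators there exceed $N^{c_0}$); instead $(k-1)^2$ arises from the Deligne constant $(k-1)^{2\omega(q)}$ in the sieved local bound, whose average over $q\le Q$ is $\log^{(k-1)^2-1}Q$ (Hall--Tenenbaum), fed through the Ruzsa--Sanders maximal trick to give an increment $\theta\approx\log^{-k(k-2)}(1/\delta)$ and hence $\approx\log^{(k-1)^2}(1/\delta)$ iterations, with the final $+1$ coming from the interval shrinkage condition. Relatedly, your iteration as described --- a constant-factor density gain repeated $O(\log(1/\delta))$ times --- would yield $\mu=1/2$, not $[(k-1)^2+1]^{-1}$, so the bookkeeping is internally inconsistent with the target; the weaker exponent is forced precisely by the polylogarithmic losses above. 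Second, smaller issues: you never address inheritance under the iteration (for homogeneous $h$ this is easy --- pass to progressions of step $q^k$ and use $h(q\x)=q^kh(\x)$, which is the paper's $h_d=h$, $\lambda(q)=q^k$ --- but it should be said), and the paper's minor-arc treatment does follow your single-variable Weyl idea, though extra work (inclusion--exclusion over sieve moduli and a parity/injection argument) is needed to make Weyl compatible with the sieve.
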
 
\noindent Here $\Delta$ denotes the usual homogeneous discriminant, and we use $\ll$ to denote ``less than a constant times'', with subscripts indicating on what parameters, if any, the implied constant depends. We take the same convention with subscripts on Big O notation. Theorem \ref{introthm} follows from Corollary \ref{2vcor} and our main result, Theorem \ref{more}, of which we discuss various improvements and important special cases throughout Section \ref{maindr}.
\subsection{Background} Lov\'asz asked whether a set of positive upper density must contain two distinct elements that differ by a perfect square, or equivalently whether $D(S,N)=o(N)$, where $S=\{n^2: n\in \N\}$. Similarly, Erd\H{o}s conjectured that $D(\mathcal{P}-1,N)=o(N)$, where $\mathcal{P}-1=\{p-1: p \text{ prime}\}$. Furstenberg \cite{Furst} verified the former using ergodic methods, specifically his correspondence principle, in the same paper in which he provided the second known proof of Szemer\'edi's Theorem on arithmetic progressions. Independently and concurrently, S\'ark\"ozy (\cite{Sark1}, \cite{Sark3}) verified both conjectures with a Fourier analytic density increment argument driven by the Hardy-Littlewood circle method. Further, S\'ark\"ozy's results included quantitative information, showing $ D(S,N) \ll_{\epsilon} N(\log N)^{-1/3+\epsilon}$ and $ D(\mathcal{P}-1,N)\ll_{\epsilon} N (\log\log N)^{-2+\epsilon}$ for every $\epsilon>0.$ 

These results have been incrementally improved and generalized in multiple ways, both through tightening of the quantitative bounds and expansion of the possibilities for the set $X$ of prohibited differences. Regarding the former, Pintz, Steiger and Szemer\'edi \cite{PSS} utilized a more elaborate Fourier analytic strategy to show \begin{equation}\label{PSSb} D(S,N)\ll N (\log N)^{-c\log\log\log\log N} \end{equation} for a constant $c>0$. 
 
Dramatically improving S\'ark\"ozy's original bound, Ruzsa and Sanders \cite{Ruz} showed \begin{equation} \label{rsb} D(\mathcal{P}-1,N)\ll Ne^{-c(\log N)^{\mu}}\end{equation} with $\mu=1/4$, recently improved to $\mu=1/3$ by Wang \cite{wang}. Regarding alternative choices for the set of prohibited differences, one must first consider obvious local obstructions. For example, we consider $\mathcal{P}-1$, rather than $\mathcal{P}$, because  $\P\cap 4\Z = \emptyset$ implies $D(\mathcal{P},N)\geq \lceil N/4 \rceil$ by taking $A$ to be a congruence class modulo $4$. Analogously, if $h\in \Z[x]$ and $h(\Z)$ contains no multiples of $q\in \N$, then $D(h(\Z),N)\geq \lceil N/q\rceil$. Therefore, for even a qualitative $o(N)$ result, it is clearly necessary that $h(\Z)$ contains a nonzero multiple of every $q\in \N$, in which case we say that $h$ is an \textit{intersective polynomial}. Examples of intersective polynomials include any nonzero polynomial with an integer root or a collection of rational roots with coprime denominators. However, there are also intersective polynomials with no rational roots, such as $(x^3-19)(x^2+x+1)$.  

Balog, Pelik\'an, Pintz, and Szemer\'edi \cite{BPPS} extended (\ref{PSSb}) with $S$ replaced by $\{n^k: n\in \N\}$ for any fixed $k\in \N$. For a general univariate intersective polynomial, Kamae and Mendes-France \cite{KMF} established the qualitative $o(N)$ result, the first quantitative bounds were due to Lucier \cite{Lucier}, and the second author \cite{ricemax} fully extended (\ref{PSSb}). In a recent preprint, Bloom and Maynard \cite{BloomMaynard} both simplified and improved the ideas of \cite{PSS}, using a more traditional density increment to establish \begin{equation}\label{BMnew} D(S,N)\ll N (\log N)^{-c\log\log\log N} \end{equation} for a constant $c>0$, which is currently the best-known bound for the original square difference question. Further, the methods of \cite{BloomMaynard} are completely compatible with those of \cite{ricemax}, so in fact (\ref{BMnew}) should hold for the full class of intersective polynomials.  For other intermediate and related results, as well as alternative proofs, the reader may refer to (in chronological order)  \cite{Green}, \cite{Slip},  \cite{Lucier2},  \cite{LM}, \cite{lipan},  \cite{Lyall}, \cite{HLR}, \cite{Rice}, and \cite{taoblog}. 

Also in \cite{ricemax}, the second author showed that if $g,h\in \Z[x]$ are intersective polynomials, then \begin{equation} \label{splitb} D(g(\Z)+h(\Z), N) \ll_{g,h} N e^{-c(\log N)^{\mu}}, \end{equation} where $c=c(g,h)>0$ and $\mu=\mu(\deg(g),\deg(h))>0$. Further, the second author \cite{Ricebin} considered the simplest nontrivial case of a non-diagonal multivariate polynomial, showing that for a binary quadratic form $h(x,y)=ax^2+bxy+cy^2\in \Z[x,y]$ with $b^2-4ac\neq 0$, we have \begin{equation} \label{binb} D(h(\Z^2), N) \ll_{h} N e^{-c\sqrt{\log N}}. \end{equation} 
 
\subsection{Motivation} As outlined in Section 2.4 of \cite{ricemax}, the quoted upper bounds in the previous section, all of which result from adaptations of the two aforementioned Fourier analytic arguments developed in \cite{Sark1} and \cite{PSS}, respectively, are partially determined by the degree of decay in local exponential averages similar to \begin{equation}\label{gsintro} q^{-1}\sum_{s=0}^{q-1}e^{2\pi i h(s)a/q}. \end{equation}
The best general upper bound for (\ref{gsintro}) is of the order $q^{-1/k}$ where $k=\deg(h)$, but the elaborate double iteration method developed in \cite{PSS}, and the simplified improvement developed in \cite{BloomMaynard}, which lead to upper bounds like (\ref{PSSb}) and (\ref{BMnew}), require decay at or near $q^{-1/2}$, which we refer to as \textit{square-root cancellation}. Inspired by \cite{BPPS}, the second author \cite{ricemax} eliminated this discrepancy for $k> 2$ in the general case by employing a polynomial-specific sieve to the set of considered inputs that, roughly speaking, reduced the issue to estimating (\ref{gsintro}) at prime moduli, for which the desired square-root cancellation is a well-known result of Weil. This sieve technique can be thought of as a bridge from the integer setting to the best available exponential sum estimates over finite fields.
 
Ruzsa and Sanders \cite{Ruz}, and later Wang \cite{wang}, were able to adapt the more traditional density increment method to establish (\ref{rsb}), which is a stronger type of upper bound as compared with (\ref{PSSb}) or (\ref{BMnew}),  based on two key factors: the high degree of decay in the relevant exponential averages, which are modifications of \begin{equation*} \phi(q)^{-1}\sum_{\substack{s=0 \\(s,q)=1}}^{q-1} e^{2\pi i s/q}=\frac{\mu(q)}{\phi(q)}, \end{equation*} and the careful analysis of the distribution of primes in arithmetic progressions, including the consideration of exceptional zeros of Dirichlet $L$-functions. In the polynomial setting, the distribution of inputs in arithmetic progressions is not as delicate of an issue, though it does rear its head when employing a sieve, but this level of local decay is out of reach with a single variable. Specifically, bounds like (\ref{2vb}) from the density increment require decay at or near $q^{-1}$ (more specifically, $q^{-1}$ times a function of average value at most polylogarithmic in $q$, and the exponent $\mu$ depends on the power of the logarithm), which we refer to as \textit{q-cancellation}. 

While the image of a multivariate intersective polynomial does not necessarily contain the image of a univariate intersective polynomial, it is the case that, by only exploiting cancellation in one variable, the methods of \cite{ricemax} and \cite{BloomMaynard} can be adapted to show that (\ref{BMnew}) holds for such an image, so upper bounds in the multivariate setting are only novel if they are stronger than (\ref{BMnew}). The observation made in \cite{ricemax} to establish (\ref{splitb}) was a rather simple one: if we consider differences of the form $g(m)+h(n)$, then the relevant exponential sum factors into a product, our sieve gives square-root cancellation in each variable, and these combine to give $q$-cancellation. However, this observation does not fully generalize to the case of a single polynomial in several variables with nonzero cross-terms. In particular, simple examples like $h(x,y)=(x+y)^2$ make it clear that one cannot always exploit cancellation in each variable, so some sort of nonsingularity assumption is required. 

In the setting of binary quadratic forms, the natural assumption is nonzero discriminant, and since sieving is not required to get square-root cancellation from each variable in degree $2$, the adaptation of the usual density increment is relatively straightforward, as done in \cite{Ricebin} to establish (\ref{binb}). Section 2 of \cite{Ricebin} provides a helpful description of the density increment method in a simpler, sieve-free context. 

For higher degrees, the sieve technique can indeed be adapted to the multivariate setting, which leads us toward the best available estimates on exponential sums for multivariate polynomials over finite fields, due to Deligne \cite{Deligne} in his proof of the Weil conjectures, and their associated nonsingularity assumptions. Recall that $\bbA^n$ and $\bbP^n$ denote $n$-dimensional affine and projective space, respectively.

\begin{definition} Suppose $F$ is a field, $\ell \in \N$, and $g\in F[x_1,\dots,x_{\l}]$ is a homogeneous polynomial. We say that $g$ is \textit{smooth} if the vanishing of $g$ defines a smooth hypersurface in $\mathbb{P}^{\ell-1}$ (as opposed to $\bbA^{\l}$). In other words, $g$ is smooth if the system $ g(\bsx)=\frac{\partial g}{\partial x_1}(\bsx)=\cdots=\frac{\partial g}{\partial x_{\l}}(\bsx)=0 $ has no solution besides $x_1=\cdots=x_{\l}=0$ in $\bar{F}^{\l}$, where the bar indicates the algebraic closure. For a general polynomial $h\in F[x_1,\dots,x_{\ell}]$ with  $h=\sum_{i=0}^k h^i$, where $h^i$ is homogeneous of degree $i$ and $h^k\neq 0$, we say that $h$ is \textit{Deligne} if the characteristic of $F$ does not divide $k$ and $h^k$ is smooth.
\end{definition} 

\begin{remn} For the remainder of the paper, we take the notational convention that, for a polynomial $h$, $h^i$ denotes the degree-$i$ homogeneous part of $h$, as opposed to $h$ raised to the $i$-th power.

\end{remn} 

\begin{theorem}[Theorem 8.4, \cite{Deligne}] \label{delmain} Suppose $\l \in \N$ and $p\in \P$. If $h\in \mathbb{F}_p[x_1,\dots,x_{\l}]$ is Deligne, then \begin{equation*} \left|\sum_{\bsx\in \mathbb{F}_p^{\l}} e^{2\pi i h(\bsx)/p} \right| \leq (\deg(h)-1)^{\l} p^{\l/2}. \end{equation*}

\end{theorem}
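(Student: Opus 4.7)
The plan is to attack the sum via $\lambda$-adic étale cohomology and Grothendieck's trace formula, which is Deligne's own framework. Fix an auxiliary prime $\lambda \neq p$, write $\psi(t) = e^{2\pi i t/p}$ for the standard additive character of $\F_p$, and let $\mathcal{L}_\psi$ be the Artin--Schreier $\bar{\Q}_\lambda$-sheaf on $\bbA^1_{\F_p}$ attached to $\psi$; it is lisse of rank one and pure of weight zero. Pulling back along $h : \bbA^\l_{\F_p} \to \bbA^1_{\F_p}$ gives a lisse rank-one sheaf $h^*\mathcal{L}_\psi$ on $\bbA^\l_{\F_p}$, and the Grothendieck--Lefschetz trace formula identifies the exponential sum with
$$ \sum_{\bsx \in \F_p^\l} \psi(h(\bsx)) = \sum_{i=0}^{2\l} (-1)^i \operatorname{Tr}\bigl( \operatorname{Frob}_p \,\big|\, H^i_c(\bbA^\l_{\bar{\F}_p}, h^*\mathcal{L}_\psi) \bigr). $$

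The theorem then reduces to three claims: (a) the vanishing $H^i_c(\bbA^\l, h^*\mathcal{L}_\psi) = 0$ for $i \neq \l$; (b) the dimension bound $\dim H^\l_c(\bbA^\l, h^*\mathcal{L}_\psi) \leq (k-1)^\l$; and (c) Frobenius purity, meaning each eigenvalue of $\operatorname{Frob}_p$ on $H^\l_c$ has complex absolute value $p^{\l/2}$. These combine immediately to yield the bound $(k-1)^\l p^{\l/2}$. The Deligne hypothesis enters (a) and (b) as follows: compactifying $\bbA^\l \subset \bbP^\l$ with hyperplane at infinity $H_\infty \cong \bbP^{\l-1}$, the ramification of $h^*\mathcal{L}_\psi$ along $H_\infty$ is governed precisely by the top-degree form $h^k$, and smoothness of $\{h^k = 0\} \subset H_\infty$ together with $p \nmid k$ guarantees that $h^*\mathcal{L}_\psi$ is tame outside a smooth divisor at infinity and maximally mild along it. A Katz-style nearby-cycles / monodromy analysis (together with Artin vanishing above the dimension) forces the lower vanishing, while a Grothendieck--Ogg--Shafarevich or Milnor-number computation evaluates $\chi_c(\bbA^\l, h^*\mathcal{L}_\psi) = (-1)^\l (k-1)^\l$, pinning down $\dim H^\l_c$ exactly. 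Claim (c) is then an application of Deligne's main theorem from Weil~II: compactly supported cohomology of a lisse pure weight-zero sheaf on a smooth affine variety has Frobenius weights $\leq i$ in degree $i$, and the ramification control above prevents any weight drop in the top degree.

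The principal obstacle is that every step rests on substantial machinery --- étale sheaves, Artin--Schreier theory, Grothendieck's trace formula, $\lambda$-adic Euler characteristics with wild ramification, and Weil~II purity --- so nothing here is elementary. Within this framework the most delicate geometric step is the Euler-characteristic computation translating smoothness of $h^k$ into the sharp count $(k-1)^\l$ via a Bezout- or Milnor-number argument; the purity step (c) is then a fairly clean black-box application once the sheaf-theoretic ramification along $H_\infty$ has been analyzed.
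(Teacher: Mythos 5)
There is nothing to compare against inside the paper: Theorem \ref{delmain} is quoted as a black box, cited verbatim as Th\'eor\`eme 8.4 of Deligne's \emph{La conjecture de Weil I}, and the authors give no proof of it (nor should they --- it is far outside the paper's toolkit). Your outline is, in substance, the standard proof and essentially Deligne's own: Artin--Schreier sheaf, Grothendieck--Lefschetz trace formula, vanishing of $H^i_c$ outside the middle degree, the dimension count $\dim H^{\l}_c = (k-1)^{\l}$ coming from the smoothness of the top-degree form $h^k$ and $p\nmid k$, and a purity/weight input. Two caveats are worth recording. First, your appeal to Weil~II is anachronistic as an account of the cited result: Deligne's original argument in Weil~I, \S 8 gets the archimedean bound from the main theorem of that same paper (the Riemann hypothesis for smooth projective varieties), after using the hypothesis on $h^k$ to control the situation at the hyperplane at infinity; invoking Weil~II instead is a legitimate and in fact cleaner modernization, and for the stated inequality you do not need full purity in claim (c) --- the weight bound $\leq \l$ in degree $\l$ together with the dimension bound in (b) already gives $(k-1)^{\l}p^{\l/2}$. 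Second, as you acknowledge, claims (a) and (b) (the vanishing and the exact Euler-characteristic/dimension computation via the ramification analysis along the divisor at infinity) are where all the real work lies, and your proposal gestures at them rather than carrying them out; so this is an accurate roadmap to the known proof rather than a self-contained argument, which is the appropriate standard here given that the paper itself imports the theorem wholesale.
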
  

This estimate provides a guide, but additional consideration is required to develop sufficient conditions on a multivariate polynomial for an application of Theorem \ref{delmain} that is compatible enough with the density increment procedure to establish an upper bound like (\ref{2vb}). We explore these details in Section \ref{maindr}. 

\subsection{Lower bounds and a special case}\label{lbsec} In all the nontrivial cases we have explored, there is a large gap in the best-known upper and lower bounds for $D(X,N)$. For an intersective polynomial $h\in \Z[x]$, all known lower bounds with $X=h(\Z)$ are of order $N^c$ for some $c<1$. The greedy algorithm gives $c=1-1/\deg(h)$, and higher values of $c$ are known for monomials (see \cite{Ruz2} and \cite{Lewko}) and certain other polynomials divisible by $x^2$ (due to Younis \cite{younis}, and explored from an algebraic number theory perspective by Wessel \cite{wessel}). For $X=\mathcal{P}-1$, the gap is even larger, and the best-known lower bound is of the form $N^{o(1)}$ (see \cite{Ruz3}). Younis \cite{younis} established lower bounds for certain homogeneous multivariate polynomials, including $D(S+S,N)\gg \sqrt{N}$, where $S$ is the set of squares.  All of these results are descended from methods of Ruzsa that transfer examples from the modular setting to the integer setting. In the absence of stronger lower bounds, the greedy algorithm gives $D(X,N)\geq (N-1)/(|X\cap[-N,N]|+1)$ for any set $X\subseteq \Z$ (see \cite{Lyall}). 

As an aside, one very special case where stronger upper bounds on $D(X,N)$ are available, and where the upper and lower bounds can be relatively close, is the case when $X$ is itself, or at least contains, a difference set. Specifically, if $Y\subseteq \{1,\dots, N\}$ and $X=Y-Y$, then for a set $A\subseteq\{1,\dots,N\}$ satisfying $(A-A)\cap X \subseteq \{0\}$, we have $a+y\neq a'+y'$ for all $a,a'\in A$ and $y,y'\in Y$ with $(a,y)\neq (a',y')$. In particular, the map $(a,y)\mapsto a+y$ into $\{1,\dots,2N\}$ is an injection, so $|A||Y|\leq 2N$, and hence $D(X,N)\leq 2N/|Y|$, while the greedy algorithm gives $D(X,N)\gg N/|X| \geq N/|Y|^2$. For an example relating to our discussion of multivariate polynomials, if $X$ is the set of differences of $k$-th powers for a fixed $k\in \N$, then we have $D(X,N)\ll N^{1-1/k}$, but this observation does not immediately generalize beyond the case where $X\supseteq Y-Y$. 

\section{Main definitions and results} \label{maindr}

The density increment procedure takes as input a set $A\subseteq \{1,2,\dots,N\}$ lacking nonzero differences in the image of a polynomial $h$, and produces a new, denser subset of a slightly smaller interval lacking nonzero differences in the image of a potentially modified polynomial. The following definition keeps track of the changes in the polynomial over the course of the iteration.

\begin{definition} Fix $\l\in \N$. As in the univariate setting, we say that $h\in \Z[x_1,\dots,x_{\l}]$ is \textit{intersective} if $h(\Z^{\l})$ contains a nonzero multiple of every $q\in \N$. Equivalently, $h$ is intersective if it is not identically zero and has a root in $\Z_p^{\l}$ for every prime $p$, where $\Z_p$ denotes the $p$-adic integers.
\end{definition}

\noindent Suppose $h\in \Z[x_1,\dots,x_{\ell}]$ is an intersective polynomial and fix, for each prime $p$, $\bsz_p\in \Z_p^{\ell}$ with $h(\bsz_p)=0$. All objects defined below certainly depend on this choice of $p$-adic integer roots, but we suppress that dependence in the subsequent notation. 

\noindent By reducing modulo prime powers and applying the Chinese Remainder Theorem, the choice of roots determines, for each $d\in \N$, a unique $\bsr_d \in (-d,0]^{\ell}$ with $\bsr_d \equiv \bsz_p \ \text{mod }p^j$ for all prime powers $p^j\mid d$.

\noindent We define a completely multiplicative function $\lambda$ (depending on $h$ and $\{\bsz_p\}$) on $\N$ by letting $\lambda(p)=p^{m_p}$ for each prime $p$, where $m_p$ is the multiplicity of $\bsz_p$ as a root of $h$, that is, $$m_p=\min\left\{i_1+\cdots+i_{\l} : \frac{\partial^{i_1+\cdots+i_{\l}}h}{\partial x_1^{i_1}\cdots \partial x_{\l}^{i_{\l}}} (\bsz_p) \neq 0\right\}. $$
Roughly speaking, $\lambda(d)$ is the largest guaranteed factor of $h(\bsn)$ for $\bsn\equiv \bsr_d \ (\text{mod }d)$.

\begin{definition}
With notation as described above, for each $d\in \N$ we define the \textit{auxiliary polynomial} $h_d\in \Z[x_1,\dots,x_{\l}]$ by $$h_d(\bsx)=h(\bsr_d+d\bsx)/\lambda(d). $$

\end{definition}

Combining the hypotheses of Theorem \ref{delmain} with the technical details of the density increment iteration, the following definition captures a sufficient condition for the success of the method. 

\begin{definition}\label{defn:smoothDeligne} When considering polynomials with integer coefficients, we use the terms \textit{smooth} and \textit{Deligne} as previously defined by embedding the coefficients in the field of rational numbers. In particular, $h\in \Z[x_1,\dots,x_{\l}]$ of degree $k\geq 1$ is Deligne if the system $ h^k(\bsx)=\frac{\partial h^k}{\partial x_1}(\bsx)=\cdots=\frac{\partial h^k}{\partial x_{\l}}(\bsx)=0 $ has no solution besides $x_1=\cdots=x_{\l}=0$ in $\bar{\Q}^{\l}$. In this case, there exists a finite set of primes $X=X(h)$ such that the reduction of $h$ modulo $p$ is Deligne for all $p\notin X$: Indeed, one can take $X(h)$ to be the set of primes dividing the Macaulay resultant $\Res\left(h^k, \frac{\partial h^k}{\partial x_1}, \cdots, \frac{\partial h^k}{\partial x_\ell}\right)$, which is nonzero precisely when $h$ is Deligne. (See also Prop. A.9.1.6 of \cite{HindrySilverman}.)

\noindent Further, we say that $h$ is \textit{strongly Deligne} if there exists a finite set of primes $X=X(h)$ and a choice $\{\bsz_p\}_{p\in \P}$ of $p$-adic integer roots of $h$ such that the reduction of $h_d$ modulo $p$ is Deligne for all $p\notin X$ and all $d\in \N$. We note that strongly Deligne polynomials are necessarily both Deligne and intersective.

\noindent To highlight some of the subtlety of this definition, we first note that  $h_d^k=\frac{d^k}{\lambda(d)}h^k$, so for a prime $p\nmid d$, we have that if $h$ is Deligne modulo $p$, then $h_d$ is Deligne modulo $p$. However, complications arise when $p\mid d$, because $h_d^i$ has a factor of $d^i/\lambda(d)$, and hence vanishes modulo $p$ for all $i>m_p$. For an example of a polynomial that is Deligne and intersective but not strongly Deligne, see ``the ugly''  in Section \ref{summsec}.


\end{definition}


 
For $k,\l\geq 2$, we let $\mu(k,\l)=\begin{cases} [(k-1)^2+1]^{-1} & \text{if }\l=2 \\ 1/2 &\text{if }\l \geq 3  \end{cases}. $ The central result of this paper is the following:
  
\begin{theorem} \label{more} If $\ell \geq 2$  and $h\in \Z[x_1,\dots,x_{\l}]$ is a strongly Deligne polynomial of degree $k\geq 2$, then \begin{equation}\label{lvb1} D(h(\Z^{\l}),N) \ll_h N e^{-c(\log N)^{\mu(k,\l)}}, \end{equation} where $c=c(h)>0$.

\end{theorem}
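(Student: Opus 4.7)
The plan is to execute a Fourier-analytic density increment argument in the tradition of Sárközy, Pintz-Steiger-Szemerédi, and Ruzsa-Sanders, fusing the polynomial sieve of \cite{ricemax} with the multivariate adaptation initiated in \cite{Ricebin}. Starting from a putative $A \subseteq [1,N]$ of density $\alpha = |A|/N$ with $(A-A) \cap h(\Z^\l) \subseteq \{0\}$, I would encode the absence of such differences through the weighted counting form
\[
\sum_{\bsx} (\mathbf{1}_A * \mathbf{1}_{-A})(h(\bsx))\, \psi(\bsx),
\]
where $\psi$ is a smooth cutoff on a box of scale $M \sim N^{1/k}$ so that typical values $|h(\bsx)|$ are of order $N$. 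Fourier inversion recasts this as $\int_0^1 |\hat A(\theta)|^2 T(\theta)\, d\theta$ with $T(\theta) = \sum_\bsx e^{2\pi i h(\bsx)\theta} \psi(\bsx)$; the $\theta = 0$ contribution $\alpha^2 N T(0)$ ought to dominate unless $\alpha$ is small.

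Next, I would split the integral across major arcs $\mathcal M(Q)$ centered at rationals $a/q$ with $q \leq Q$ and a minor-arc remainder. On the minor arcs, Weyl-type estimates (possibly via Weyl differencing) should bound $|T(\theta)|$ well below $T(0)$. The major-arc contribution near $a/q$ is controlled by the local average
\[
G(a,q) = q^{-\l}\sum_{\bsx \in (\Z/q\Z)^\l} e^{2\pi i h(\bsx)a/q},
\]
which factors over prime powers dividing $q$ by the Chinese Remainder Theorem. For primes $p$ outside the resultant-defined set $X(h)$, Theorem \ref{delmain} gives $|G(a,p)| \leq (k-1)^\l p^{-\l/2}$; at the exceptional primes and at higher prime powers only weaker bounds are available, so a sieve is required.

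The main technical obstacle is to upgrade this square-root-per-variable cancellation into genuine $q$-cancellation (when $\l = 2$) or the appropriately stronger estimate (when $\l \geq 3$), \emph{uniformly} across all iteration steps. Following \cite{ricemax}, I would restrict inputs to the congruence class $\bsx \equiv \bsr_d \pmod d$, which rewrites $h(\bsx) = \lambda(d)\, h_d((\bsx - \bsr_d)/d)$ and effectively replaces $h$ by the auxiliary polynomial $h_d$. The strongly Deligne hypothesis is precisely what ensures $h_d$ remains Deligne modulo every prime outside a fixed finite set, so Theorem \ref{delmain} applies uniformly at every stage of the iteration, yielding a $(\log q)^{O(k)}$-controlled loss for $\l = 2$ and strictly stronger decay for $\l \geq 3$.

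Finally, the density-increment dichotomy closes the loop: either the circle-method estimates recover the expected main term (forcing $\alpha$ to be small), or some $a/q$ with $q \leq Q$ admits $|\hat A(a/q)|$ large, and a Bohr-set-to-AP lemma extracts $A' \subseteq [1,N']$ sitting inside an arithmetic progression of common difference $d'$ with density $\alpha' \geq \alpha(1 + c\alpha^{s})$, still lacking nonzero differences in $h_{d'}(\Z^\l)$. Iterating and tracking the density-vs.-length trade-off as in \cite{Ruz}, \cite{Ricebin}, \cite{wang} should deliver $D(h(\Z^\l),N) \ll_h N e^{-c(\log N)^{\mu(k,\l)}}$. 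The exponent caps at $1/2$ for $\l \geq 3$ because the density-increment machinery exploits at most $q$-cancellation, while for $\l = 2$ the weaker $[(k-1)^2+1]^{-1}$ reflects the balance between the $(\log q)^{O(k)}$ sieve loss in the local averages and the density increment extracted per step.
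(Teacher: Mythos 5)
Your overall architecture does match the paper's: an $L^2$-type density increment run through the circle method, Deligne's bound (Theorem \ref{delmain}) at the good primes, auxiliary polynomials $h_d$ with the strongly Deligne hypothesis providing uniformity in $d$, and the same bookkeeping giving $\mu(k,\ell)$. But there is a genuine gap at the heart of the plan: you have misidentified the sieve. Restricting inputs to the congruence class $\bsx \equiv \bsr_d \pmod d$ is not a sieve and produces no cancellation; in the paper it is the inheritance device (Proposition \ref{inh}) by which, after the increment lands $A$ on a progression (whose step must in fact be taken to be $\lambda(q)$ rather than $q$, via a pigeonhole into subprogressions), the new set lacks nonzero differences in $h_{qd}(\Z^\ell)$. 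What your proposal never supplies is square-root cancellation per variable in the local sums for \emph{all} moduli $q\le Q$: Theorem \ref{delmain} only treats complete sums modulo a prime, and at prime-power moduli $p^v$ (and at the finitely many bad primes) the unsieved complete sums are only known to satisfy bounds of the shape $q^{\ell-1/k}$ (normalized decay $q^{-1/k}$), far short of the $q$-cancellation the increment requires even for $\ell=2$. The paper's actual sieve is the restriction of the summation to $W_d(Y)$, the set of inputs where $\grad h_d$ does not vanish modulo $p^{\gamma_d(p)}$ for all $p\le Y$; a multivariate Hensel argument (Lemma \ref{hensel}, used in Lemma \ref{locgen}) then makes the sieved local sums vanish identically at $p^v$ with $v\ge 2\gamma(p)$, while at prime moduli Deligne applies with only an $O_{k,\ell}(1)$ loss from the removed points (Lemma \ref{gradconst}), and Propositions \ref{idzero} and \ref{content} keep the exceptional moduli and the sieve's main term under control uniformly in $d$. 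Without this ingredient the major-arc estimate (\ref{SmajII}) is unavailable and the iteration does not close, so the step you flag as ``the main technical obstacle'' is resolved in your write-up by a mechanism that does not do the work attributed to it.

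Two further points would need repair even after inserting the correct sieve. First, the minor arcs are then no longer a plain Weyl bound: one must run inclusion--exclusion over the sieve moduli $D$ and exploit the parity cancellation in $(-1)^{\omega(D)}$ for the large $D$ (Lemma \ref{weyl3}), and the major-arc asymptotic for the sieved, incomplete sum requires multivariate partial summation (Lemma \ref{Sasym}) together with a van der Corput bound on the resulting archimedean integral (Corollary \ref{vdccor}); none of this is sketched. Second, your increment step should be phrased so that the new progression has common difference $\lambda(q)$, since that is exactly what Proposition \ref{inh} needs for the passage from $h_d$ to $h_{qd}$. These are all fixable along the paper's lines, but as written the proposal's central cancellation claim is not justified.
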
  

\begin{rem} In Theorem \ref{more}, the full image $h(\Z^{\l})$ is considered for ease of exposition, and to make the conclusion invariant under input translation. However, by inspection of the proof, the same upper bound can be seen to hold for $D(h([1,N^{\epsilon}]^{\l}),N)$ for any $\epsilon>0$, with $c$ and the implied constant depending on $\epsilon$. Also, in several of our results and definitions, we exclude the case $k=1$ only out of convenience due to its triviality in this context. Specifically, if $h\in \Z[x_1,\dots,x_{\l}]$ with $\deg(h)=1$, then $D(h(\Z^{\l}),N)\ll_h 1$ if $0\in h(\Z^{\l})$ and $D(h(\Z^{\l}),N)\gg_h N$ otherwise.
\end{rem}

After setting the stage with preliminary definitions and observations in Section \ref{prelimsec}, we prove Theorem \ref{more} in Section \ref{itproof}, and then establish the needed exponential sum estimates, which we state separately as Theorem \ref{standalonethm}, in Section \ref{expest}. More imminently, in Sections~\ref{sec:integer_root} and \ref{sec:deligne_case}, we describe sufficient conditions under which $h \in \Z[x_1,\ldots,x_\ell]$ is strongly Deligne, and hence (\ref{lvb1}) holds. Then, in Section~\ref{sec:singular}, we explain that in many cases we may still get a bound similar to (\ref{lvb1}) even when the strongly Deligne condition is significantly relaxed.
\subsection{The integer root case}\label{sec:integer_root} The simplest sufficient condition for the intersectivity of a nonzero polynomial is the existence of an integer root. In this case, all $p$-adic integer roots can be taken to equal said integer root, which simplifies the auxiliary polynomial definition, giving rise to a pleasantly tangible sufficient condition for the strongly Deligne property, as captured with the following proposition.

\begin{proposition}\label{prop:integer_root} Suppose $\ell \geq 2$ and $h\in \Z[x_1,\dots,x_{\l}]$ with $h(\bszero)=0$. If  the highest and lowest degree homogeneous parts of $h$ are smooth, then $h$ is strongly Deligne. 
\end{proposition}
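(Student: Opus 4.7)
The plan is to reduce everything to an explicit computation of the auxiliary polynomials $h_d$, exploiting the fact that the integer root $\bszero$ eliminates the $p$-dependent choices inherent in the strongly Deligne definition. First, I take $\bsz_p=\bszero$ for every prime $p$, which is permissible because $h(\bszero)=0$. Chinese remaindering then forces $\bsr_d=\bszero$ for every $d\in\N$, and the multiplicity of $\bszero$ as a root of $h$ is exactly the least $m$ with $h^m\neq 0$, independent of $p$. Hence $\lambda(p)=p^m$ for all $p$, so $\lambda(d)=d^m$. Intersectivity is immediate from the existence of an integer root, so what remains is to produce a finite set of primes $X(h)$ outside of which the reduction of $h_d$ modulo $p$ is Deligne, for every $d\in\N$.

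Writing $h=h^m+h^{m+1}+\cdots+h^k$, direct substitution gives
\[
h_d(\bsx) \;=\; \frac{h(d\bsx)}{d^m} \;=\; \sum_{i=m}^{k} d^{i-m}\, h^i(\bsx).
\]
I then split into two cases according to whether or not $p$ divides $d$. When $p\nmid d$, every coefficient $d^{i-m}$ is a unit modulo $p$, so $h_d\bmod p$ still has degree $k$ with top homogeneous part $d^{k-m}h^k \bmod p$, a nonzero scalar multiple of the reduction of $h^k$. Thus $h_d\bmod p$ is Deligne as soon as $p\nmid k$ and $h^k\bmod p$ is smooth, both of which hold outside a finite set of primes (namely, those dividing $k$ together with the set $X(h^k)$ of Definition \ref{defn:smoothDeligne}).

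The more delicate case is $p\mid d$. Here every coefficient $d^{i-m}$ with $i>m$ is divisible by $p$, so
\[
h_d \;\equiv\; h^m \pmod{p}.
\]
Consequently $h_d\bmod p$ has degree $m$ and its top (and only) homogeneous part is $h^m\bmod p$, so Deligneness reduces to the conditions $p\nmid m$ and $h^m\bmod p$ smooth, again holding outside a finite exceptional set. Taking $X(h)$ to be the union of the two finite exceptional sets produced above completes the verification.

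There is no serious obstacle here; the heart of the matter is the observation that $h_d\bmod p$ collapses to $h^m\bmod p$ whenever $p\mid d$, which both explains \emph{why} the hypothesis on the lowest-degree part $h^m$ (and not merely the top part $h^k$) is needed, and reduces the whole proposition to two routine Deligne checks on the homogeneous pieces $h^k$ and $h^m$ individually.
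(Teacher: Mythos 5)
Your proposal is correct and follows essentially the same route as the paper's proof: choose $\bsz_p=\bszero$ for all $p$, compute $h_d(\bsx)=h(d\bsx)/d^{j}$ (your $m$ is the paper's $j$), and split on $p\nmid d$ versus $p\mid d$, where in the latter case $h_d$ collapses modulo $p$ to the lowest-degree part, exactly as in the paper. The exceptional set you take (primes dividing $mk$ together with primes where $h^k$ or $h^m$ fails to be smooth) matches the paper's choice, so there is nothing to add.
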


\begin{proof} Suppose $h$ satisfies the hypotheses, let $k=\deg(h)$, let $j$ denote the lowest degree of the nonzero terms of $h$, and let $X$ denote the finite set of primes $p$ such that $p\mid jk$ or either $h^k$ or $h^j$ is not smooth modulo $p$. Making the natural choice of $p$-adic integer roots $\bsz_p=\bszero$ for all $p$, we then have $h_d(\bsx)=h(d\bsx)/d^j$, hence $h^i_d(\bsx)=d^{i-j}h^i(\bsx)$. Fix $p\notin X$. If $p\nmid d$, then the highest degree part of $h_d$ modulo $p$ is a nonzero multiple of $h^k$, which is smooth modulo $p$, hence $h_d$ is Deligne modulo $p$. If $p\mid d$, then the only nonvanishing homogeneous part of $h_d$ is precisely $h^j$, which is smooth modulo $p$, hence $h_d$ is Deligne modulo $p$.\end{proof}
 
\begin{rem}
We note that $h(\Z^{\l})$, hence the threshold $D(h(\Z^{\l}),N)$, as well as the Deligne and strongly Deligne properties, are all invariant under translations of the form $h(\bsx+\bsn)$ for a fixed $\bsn\in \Z^{\l}$. In particular, Proposition \ref{prop:integer_root} applies provided there exists $\bsn\in \Z^{\l}$ such that $h(\bsn)=0$ and the highest and lowest degree parts of $h(\bsx+\bsn)$ are smooth. More generally, all of our results that hold for a polynomial $h$ also hold for the full translation equivalence class of $h$.
\end{rem}
 
For homogeneous bivariate polynomials, smoothness of the corresponding ($0$-dimensional) variety is equivalent to non-vanishing of the discriminant. Therefore, for $\ell = 2$, we have the following, which in particular combines with Theorem \ref{more} to yield Theorem \ref{introthm} as a special case.

\begin{corollary}\label{2vcor} Suppose $h\in \Z[x,y]$ with $h(0,0)=0$. If  the highest and lowest degree homogeneous parts of $h$ have nonzero homogeneous discriminant, then $h$ is strongly Deligne.

\end{corollary}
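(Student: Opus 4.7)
The plan is to reduce Corollary \ref{2vcor} directly to Proposition \ref{prop:integer_root}, since the hypothesis $h(0,0)=0$ gives the integer root required by that proposition, and the intersectivity is automatic. The only thing left to verify is that, for a homogeneous bivariate polynomial $g \in \Z[x,y]$, nonvanishing of the homogeneous discriminant $\Delta(g)$ is equivalent to $g$ being smooth in the sense of Definition \ref{defn:smoothDeligne}.

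To establish this equivalence, I would factor $g$ completely over $\bar{\Q}$ into linear forms, writing $g(x,y) = c\prod_{i=1}^{d}(\beta_i x - \alpha_i y)^{e_i}$ with $d = \deg(g)$ (allowing for the possibility that $y \mid g$ so that some $\beta_i = 0$). The zero set of $g$ in $\bbP^1_{\bar{\Q}}$ is the set of points $[\alpha_i:\beta_i]$, and smoothness of this subscheme is equivalent to all $e_i = 1$, i.e., the linear factors being pairwise non-proportional. From the definition in terms of partial derivatives, one can see this equivalence cleanly using Euler's identity $d \cdot g = x g_x + y g_y$: in characteristic zero this reduces the vanishing of $g, g_x, g_y$ simultaneously to the vanishing of just $g_x, g_y$, and a repeated linear factor forces such a common zero, while distinct linear factors preclude it. This matches exactly the classical characterization of the homogeneous discriminant vanishing precisely when $g$ has a repeated root in $\bbP^1$.

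With this equivalence in hand, the assumption $\Delta(h^k),\Delta(h^j) \neq 0$ translates to smoothness of the highest- and lowest-degree parts of $h$, and Proposition \ref{prop:integer_root} then immediately yields that $h$ is strongly Deligne. There is really no substantive obstacle here; the only subtlety worth noting is that the "lowest degree part" $h^j$ is the part of smallest degree among the nonzero homogeneous components (which could equal $h^k$ if $h$ is itself homogeneous), and that the appeal to Proposition \ref{prop:integer_root} uses the choice $\bsz_p = \bszero$ for every prime $p$, making the auxiliary polynomials $h_d$ completely explicit.
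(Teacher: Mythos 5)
Your proposal is correct and follows exactly the paper's route: the paper deduces Corollary~\ref{2vcor} from Proposition~\ref{prop:integer_root} by noting that for bivariate homogeneous polynomials, smoothness (in the sense of Definition~\ref{defn:smoothDeligne}) is equivalent to nonvanishing of the homogeneous discriminant. The only difference is that the paper cites this equivalence as a standard fact, whereas you spell out its verification via the factorization into linear forms and Euler's identity, which is a harmless addition.
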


\subsection{The Deligne case}\label{sec:deligne_case}

Taking the next step in complexity, here we consider the case of a polynomial that is Deligne and intersective, but may not have an integer root. Recalling that if $p\mid d$, then $h^i_d$ vanishes modulo $p$ for all $i>m_p$, we make the following definition with the hopes of exploiting the fact that a nonzero homogeneous linear polynomial is guaranteed to be smooth.

\begin{definition} For $\ell \in \N$ and $h\in \Z[x_1,\ldots,x_{\l}]$ we say that $h$ is \textit{smoothly intersective} if there exists a choice  $\{\bsz_p\}_{p\in \mathcal{P}}$ of $p$-adic integer roots of $h$ such that $m_p=1$ for all but finitely many $p$. In other words, the variety defined by $h=0$ has at least one point over $\Z_p$ for all $p$, and at least one nonsingular point over $\Z_p$ for all but finitely many $p$.
\end{definition}

For low-hanging examples of polynomials that are intersective but not smoothly intersective, one could consider the square of any intersective polynomial, but such polynomials do not pass even our coarsest of nonsingularity filters.  For an example of a polynomial that is intersective but not smoothly intersective in a more subtle and problematic way, see our discussion of ``the ugly'' in Section \ref{summsec}. Combining the motivation for the smoothly intersective definition with the fact that the highest degree part of a Deligne polynomial is assumed to be smooth, the following proposition provides a sufficient condition for the strongly Deligne property, and includes two notable special cases.

\begin{proposition}\label{thm:main} Suppose $\ell \ge 2$ and $h \in \Z[x_1,\ldots,x_{\l}]$ is Deligne and intersective with $\deg(h)=k\geq 2$. If there exists a choice $\{\bsz_p\}_{p\in \P}$ of $p$-adic integer roots of $h$ satisfying $m_p\in \{1,k\}$ for all but finitely many $p$, then $h$ is strongly Deligne. In particular, if $k=2$ or $h$ is smoothly intersective, then $h$ is strongly Deligne.  
 
\end{proposition}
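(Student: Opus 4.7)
The plan is a direct case analysis on the $p$-adic valuation of the scalar factors $d^i/\lambda(d)$ appearing in the homogeneous decomposition of the auxiliary polynomial $h_d$, which controls which parts of $h_d$ survive reduction modulo $p$. First I would assemble a finite exceptional set $X = X(h)$ containing the primes $p \le k$, the primes outside of which $h^k$ reduces to a nonzero smooth polynomial modulo $p$ (finite by the Deligne hypothesis, cf.~Definition \ref{defn:smoothDeligne}), and the finitely many primes for which $m_p \notin \{1,k\}$ in the chosen collection $\{\bsz_p\}_{p\in\P}$ realizing the hypothesis. The task is then to verify that for every $p \notin X$ and every $d \in \N$, the reduction $h_d \pmod p$ is Deligne. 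Writing $d = p^a m$ with $\gcd(p,m)=1$, the essential input is the identity $h_d^k(\bsx) = (d^k/\lambda(d))\, h^k(\bsx)$ together with the fact that each scalar factor $d^i/\lambda(d)$ occurring in $h_d^i$ has $p$-adic valuation $a(i - m_p)$.

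When $a = 0$, or when $a \ge 1$ together with $m_p = k$, this valuation is zero at $i = k$, so $h_d^k \pmod p$ is a nonzero scalar multiple of the smooth form $h^k \pmod p$, and $p \nmid k$; hence $h_d \pmod p$ is Deligne. The case $a \ge 1$ with $m_p = 1$ is the one that motivates the hypothesis: here every part $h_d^i$ with $i \ge 2$ picks up at least one factor of $p$ from $d^i/\lambda(d)$ (with $p > k \ge i$ absorbing any factorials from $\bsalpha!$) and therefore vanishes modulo $p$, so $h_d \pmod p$ has degree at most $1$. Its degree-$1$ part reduces to a unit scalar times the linear form $\sum_j \partial_{x_j} h(\bsr_d)\, x_j \equiv \sum_j \partial_{x_j} h(\bsz_p)\, x_j \pmod p$, and $m_p = 1$ supplies an index $j$ with $\partial_{x_j} h(\bsz_p) \not\equiv 0 \pmod p$. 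Since a nonzero linear form is trivially smooth and has degree coprime to $p$, $h_d \pmod p$ is again Deligne.

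The two listed special cases then follow immediately: for $k = 2$ the chain $1 \le m_p \le k = 2$ forces $m_p \in \{1,k\}$ whenever $\bsz_p$ is a root; and smooth intersectivity directly supplies $m_p = 1$ for all but finitely many $p$. The one step I expect to require care is Case~3, where smoothness of the reduced linear form needs $\partial_{x_j} h(\bsz_p)$ to be a $p$-adic unit rather than merely nonzero in $\Z_p$. This aligns with the paper's ``largest guaranteed factor'' gloss on $\lambda$ and with the nonsingular-$\Z_p$-point formulation of smooth intersectivity, so once this reading of $m_p$ is fixed, the remainder of the argument is bookkeeping.
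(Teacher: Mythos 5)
Your Cases 1 and 2 ($p\nmid d$, or $p\mid d$ with $m_p=k$) and your deduction of the two special cases match the paper's argument. The genuine gap is exactly the point you flag at the end of Case 3, and it cannot be dismissed as ``fixing the reading of $m_p$'': in this paper $m_p$ is defined by nonvanishing of $\partial^{\bsi}h(\bsz_p)$ in $\Z_p$, so $m_p=1$ only says that $\grad h(\bsz_p)\neq\bszero$ as a vector of $p$-adic integers, not that some partial derivative is a $p$-adic unit. If every entry of $\grad h(\bsz_p)$ is divisible by $p$, then for $p\mid d$ the linear part of $h_d$ vanishes modulo $p$ along with all higher parts, the reduction of $h_d$ is constant, and $h_d$ is not Deligne modulo $p$. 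Moreover, a priori this could happen at infinitely many primes for the given choice of roots, so these primes cannot simply be thrown into the exceptional set $X$. Reinterpreting $m_p=1$ as ``unit gradient'' would silently strengthen the hypothesis of the proposition (and of the smoothly intersective special case, which is likewise defined via $\Z_p$-nonsingularity), so it does not rescue the statement as written.

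What is actually needed, and what the paper supplies, is the freedom to re-choose the roots together with a nontrivial existence statement. Since $h$ is Deligne, $h^k$ is squarefree, hence $h$ is squarefree and $V=\{h=0\}$ is reduced; Lemma \ref{lem:equiv} (whose relevant implication is proved via the Lang--Weil estimates of Lemma \ref{langweillem}) then shows that for all sufficiently large $p$, the existence of a nonsingular $\Z_p$-point (i.e.\ $m_p=1$) forces $V^{\ns}(\F_p)\neq\emptyset$. Hensel-lifting such an $\F_p$-point produces a replacement root $\bsz_p\in\Z_p^{\l}$ with $\grad h(\bsz_p)\not\equiv\bszero \pmod{p}$, still with $m_p=1$, and with this modified choice of $\{\bsz_p\}$ your Case 3 computation goes through verbatim; the finitely many primes at which the equivalence of Lemma \ref{lem:equiv} may fail are absorbed into $X$. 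Without this input---or some substitute argument producing, for every large $p$, a root whose reduction is a nonsingular $\F_p$-point---Case 3, and hence the proposal, does not close.
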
 




Using estimates on the number of  nonsingular points on irreducible varieties over finite fields, we obtain the following convenient criterion for smooth intersectivity.

\begin{proposition}\label{prop:main}
Suppose $\ell \ge 2$ and $h \in \Z[x_1,\ldots,x_{\ell}]$ is Deligne and intersective, and let $h=g_1\cdots g_n$ be an irreducible factorization of $h$ in $\Z[x_1,\dots,x_{\l}]$. If $g_i$ is geometrically irreducible for some $1\leq i \leq n$, then $h$ is smoothly intersective, hence strongly Deligne.
\end{proposition}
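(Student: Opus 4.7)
The plan is to produce, for all but finitely many primes $p$, a nonsingular $\F_p$-point on the hypersurface cut out by the reduction $\bar g_i$ that avoids every other $V(\bar g_s)$, then lift it via Hensel's lemma to a $\Z_p$-root of $h$ at which $h$ vanishes simply. Combined with arbitrary choices at the finitely many exceptional primes (possible by intersectivity), this will give $m_p=1$ for almost all $p$, so $h$ is smoothly intersective; Proposition \ref{thm:main} then upgrades this to strongly Deligne.

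First I would observe that the irreducible factors $g_s$ must in fact be pairwise distinct. Since $h$ is Deligne, $h^k$ is smooth, and any smooth nonconstant homogeneous polynomial is squarefree: a repeated irreducible factor $f$ would make every point of $V(f)\setminus\{\bszero\}$ a singular point of $V(h^k)$, via $\nabla(f^2r) = f(2(\nabla f)r + f\nabla r)$. Writing $h^k=\prod_s g_s^{k_s}$ where $g_s^{k_s}$ is the leading form of $g_s$, a coincidence $g_s=g_t$ with $s\neq t$ would inject a squared factor into $h^k$. Hence the $g_s$ are pairwise coprime in the UFD $\Q[x_1,\dots,x_\ell]$, and so $V(g_i)\cap V(g_s)$ has codimension at least $2$ in $\bbA^\ell_\Q$ for each $s\neq i$.

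Next, standard spreading-out arguments over $\mathrm{Spec}\,\Z$ yield a cofinite set of primes $p$ for which all three of the following hold: (i) $\bar g_i$ remains geometrically irreducible over $\F_p$; (ii) $\dim\bigl(V(\bar g_i)\cap V(\bar g_s)\bigr)\le \ell-2$ for every $s\neq i$; (iii) the singular locus of $V(\bar g_i)$ has dimension at most $\ell-2$ (inherited from the same statement in characteristic zero, which follows from the Jacobian criterion for the geometrically irreducible variety $V(g_i)$). For such $p$, the Lang--Weil estimate applied to the geometrically irreducible affine variety $V(\bar g_i)$ gives $|V(\bar g_i)(\F_p)|=p^{\ell-1}+O_h(p^{\ell-3/2})$, while the union of the singular locus of $V(\bar g_i)$ with $\bigcup_{s\neq i}V(\bar g_s)\cap V(\bar g_i)$ contains at most $O_h(p^{\ell-2})$ $\F_p$-points. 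Thus for $p$ large enough there exists $\bar{\bsz}\in V(\bar g_i)(\F_p)$ that is nonsingular on $V(\bar g_i)$ and satisfies $\bar g_s(\bar{\bsz})\neq 0$ for every $s\neq i$.

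Finally, the multivariate Hensel's lemma lifts $\bar{\bsz}$ to $\bsz_p\in \Z_p^\ell$ with $g_i(\bsz_p)=0$, hence $h(\bsz_p)=0$. Because $g_i(\bsz_p)=0$, the product rule collapses the gradient of $h$ at $\bsz_p$ to
$$\nabla h(\bsz_p)=\Bigl(\prod_{s\neq i}g_s(\bsz_p)\Bigr)\nabla g_i(\bsz_p),$$
in which both factors are $p$-adic units by our choice of $\bar{\bsz}$, so $m_p=1$. At the finitely many excluded primes we invoke intersectivity of $h$ to pick arbitrary $p$-adic roots, affecting only finitely many $m_p$ values and so harmless. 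I expect the main technical obstacle to be packaging (i)--(iii) correctly --- certifying via resultants and generic flatness that geometric irreducibility, pairwise coprimality, and singular-locus dimension are all preserved modulo $p$ for all but finitely many $p$ --- but each is a routine spreading-out argument, and the body of the proof is really the Lang--Weil count in Step 3 together with the gradient computation in Step 4.
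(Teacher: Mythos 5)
Your proposal is correct and follows essentially the same route as the paper: the paper proves this via Lemma \ref{lem:equiv} (whose content is exactly your combination of Lang--Weil to find a nonsingular $\F_p$-point on the geometrically irreducible component avoiding the other components, plus Hensel lifting to a nonsingular $\Z_p$-point), deduces Corollary \ref{prop:main_stronger}, and then invokes Proposition \ref{thm:main} just as you do. Your explicit spreading-out conditions, squarefreeness-from-smoothness observation, and product-rule gradient computation simply unpack the paper's statements that a Deligne polynomial cuts out a reduced variety and that a nonsingular $\Z_p$-point corresponds to a root with $m_p=1$.
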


\begin{rem} 
The conclusion of Proposition~\ref{prop:main} remains true under weaker assumptions on the factorization of $h$. We give this cleaner statement here, but prove the more general statement in Corollary~\ref{prop:main_stronger}.
\end{rem}


%



For $\ell \ge 3$, the Deligne condition actually implies geometric irreducibility, yielding the following:

\begin{corollary}\label{3var} Suppose $\l \geq 3$ and $h\in \Z[x_1,\dots,x_{\l}]$. If $h$ is Deligne and intersective, then $h$ is smoothly intersective, hence strongly Deligne.

\end{corollary}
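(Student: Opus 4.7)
The plan is to invoke Proposition~\ref{prop:main}, which yields smooth intersectivity --- and hence the strongly Deligne property --- as soon as one can exhibit a geometrically irreducible factor in the irreducible $\Z$-factorization of $h$. Thus the task reduces to verifying that, for $\ell \geq 3$, the Deligne hypothesis actually forces $h$ itself to be geometrically irreducible. The central geometric fact I would invoke is that a smooth hypersurface in $\mathbb{P}^n$ with $n \geq 2$ is geometrically irreducible, not only as a variety but, up to a nonzero scalar, as a defining polynomial. A repeated irreducible factor $g^2 \mid f$ makes every partial $\partial f/\partial x_i$ vanish on all of $V(g)$, violating the Jacobian criterion; distinct irreducible components $V(g_1), V(g_2) \subset \mathbb{P}^n$ must intersect when $n \geq 2$ (since two hypersurfaces in $\mathbb{P}^n$ always meet for $n \geq 2$, by dimension count), and the product polynomial $g_1 g_2$ vanishes together with all its partials at any such intersection point.

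Applying this fact with $n = \ell - 1 \geq 2$, the Deligne hypothesis says precisely that $h^k$ defines a smooth hypersurface in $\mathbb{P}^{\ell - 1}$, and therefore $h^k$ is geometrically irreducible up to a rational scalar. I would then pull this down to $h$ itself by multiplicativity of the leading homogeneous component: for any factorization $h = g_1 \cdots g_n$, taken over $\Z$ or over $\bar{\Q}$, one has $h^k = g_1^{\deg g_1} \cdots g_n^{\deg g_n}$, where (consistent with the paper's notational convention) $g_i^{\deg g_i}$ denotes the top-degree homogeneous part of $g_i$. A nontrivial factorization of $h$ over $\bar{\Q}$ would therefore produce a nontrivial factorization of $h^k$ over $\bar{\Q}$, contradicting the established geometric irreducibility of $h^k$. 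Hence $h$ is itself geometrically irreducible, and Proposition~\ref{prop:main} delivers the remaining conclusions.

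No step here is a real obstacle; once Proposition~\ref{prop:main} is granted, the corollary is a short geometric deduction. The only small points worth explicitly noting in the write-up are the multiplicativity of the leading homogeneous component on nonzero polynomials, and a brief check that the Jacobian-style definition of ``smooth'' used in the paper really does preclude both squared factors in the defining polynomial and multiple geometric components of the zero locus.
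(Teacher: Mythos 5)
Your proposal is correct and follows essentially the same route as the paper: both reduce, via Proposition~\ref{prop:main}, to showing that the Deligne condition forces geometric irreducibility of $h$ when $\ell \geq 3$, and both obtain this from the fact that two hypersurfaces in $\bbP^{\ell-1}$ must intersect when $\ell \geq 3$, any such intersection point being a singular point of $\{h^k = 0\}$. The only difference is organizational --- you first establish irreducibility of $h^k$ as a polynomial (treating repeated factors via the Jacobian criterion) and then pull it back to $h$ by multiplicativity of top-degree parts, whereas the paper assumes a nontrivial factorization $h = g_1 g_2$ over $\QQbar$ and directly produces a singular point of $\{h^k=0\}$ from the factorization $h^k = g_1^d g_2^{k-d}$ --- but this is the same underlying argument.
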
 

\begin{proof}
By Proposition~\ref{prop:main}, it suffices to show that if $h$ is a Deligne polynomial in $\ell \ge 3$ variables, then $h$ is geometrically irreducible. Suppose to the contrary that $h = g_1g_2$ with $g_1,g_2 \in \QQbar[x_1,\ldots,x_\ell]$ nonconstant of degrees $d$ and $k - d$, respectively. In particular, we have $h^k = g_1^dg_2^{k-d}$. Each of $\{g_1^d = 0\}$ and $\{g_2^{k-d} = 0\}$ has codimension $1$ in $\bbP^{\ell -1}$ (since they are hypersurfaces) and dimension at least $1$ (since we assumed $\ell \ge 3$). In particular, $\{g_1^d = 0\}$ and $\{g_2^{k-d} = 0\}$ have nontrivial intersection, and any intersection point must be a singular point of the union $\{h^k = 0\}$, contradicting the fact that $h$ is Deligne.
\end{proof}

In Section \ref{AG1}, we collect some crucial tools from algebraic geometry, which are followed by the proofs of both Proposition \ref{thm:main} and the aforementioned generalization of Proposition \ref{prop:main}.
 
\subsection{The singular case}\label{sec:singular}

While the Deligne condition is required to apply Theorem \ref{delmain} to get the desired cancellation in our exponential sums, brief consideration reveals that the condition is not strictly necessary for a bound like (\ref{lvb1}) to hold, provided the failure of the Deligne condition is in balance with the freedom of extra variables.  For a particularly simple example, consider $h(x,y,z)=(x+z)^4+(x+z)y^3+y^4.$ This is a homogeneous degree-$4$ polynomial, and the variety $\widehat{V}\subseteq \bbP^2$ defined by its vanishing has a unique singular point, namely $(1:0:-1)$. In particular, $h$ is not Deligne. However, by fixing $z=0$, we can define $g(x,y)=h(x,y,0)=x^4+xy^3+y^4$, which is a bivariate homogeneous polynomial of nonzero discriminant. In particular, $g$ is strongly Deligne, so Theorem \ref{more} applies, and moreover $g(\Z^2)=h(\Z^3)$, so (\ref{lvb1}) holds for $h$ as well, applied as if $\l=2$ as opposed to $\l=3$.

This example hints at a less black-and-white consideration of the singularity of a projective variety. For $h\in \Z[x_1,\dots,x_{\l}]$ with $\deg(h)=k\geq 1$, $h$ is Deligne precisely when the variety $\widehat{V}\subseteq \bbP^{\l-1}$ defined by $h^k=0$ is nonsingular. The example above indicates that we should really only need to avoid this variety being ``too singular'', which leads to the following definition.

\begin{definition} For $\ell \in \N$ and a nonconstant homogeneous polynomial $g\in \Z[x_1,\dots,x_{\ell}]$, let $\widehat{V}\subseteq \mathbb{P}^{\ell-1}$ be the variety defined by $g=0$, and let $\widehat{V}^\sing$ be the singular locus of $\widehat{V}$. We define the \textit{rank} of $g$ to be the codimension of $\widehat{V}^\sing$ in $\mathbb{P}^{\ell-1}$, with the convention that the empty set has dimension $-1$, hence the codimension of the empty set in $\bbP^{\l-1}$ is $\l$. This is a notion of rank developed by Birch in \cite{birch} and utilized, for example, in \cite{cookmagyar}.

\end{definition}

For $h\in \Z[x_1,\dots,x_{\l}]$ with $\deg(h)=k\geq 1$, the rank of $h^k$ should, roughly speaking, encode the number of variables $r$ such that $g(\Z^r)\subseteq h(\Z^{\l})$ for some Deligne polynomial $g\in \Z[x_1,\dots,x_{r}]$. In particular, $h$ is Deligne if and only if the rank of $h^k$ is $\l$. In Section \ref{dimlowsec}, using careful dimension-lowering arguments, we successfully expand the class of polynomials for which a result analogous to Theorem \ref{more} holds, generalizing our efforts from Sections \ref{sec:integer_root} and \ref{sec:deligne_case} as follows.
 


\begin{theorem} \label{dimlowrootthm} Suppose $\ell \geq 2$ and $h \in \Z[x_1,\dots,x_{\ell}]$ with $h(\bszero)=0$ and $\deg(h)=k\geq 2$. Let $r$ be the minimum rank of the highest and lowest degree homogeneous parts of $h$. If $r\geq 2$, then \begin{equation}\label{lvb} D(h(\Z^{\ell}),N) \ll_h N e^{-c(\log N)^{\mu(k,r)}},\end{equation} where $c=c(h)>0$.
\end{theorem}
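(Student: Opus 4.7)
The strategy is a dimension reduction: find a $\Q$-linear embedding realized by an $\ell \times r$ integer matrix $M$ of rank $r$ such that $g(\bsy):=h(M\bsy) \in \Z[y_1,\dots,y_r]$ has $g(\bszero)=0$ and both its highest- and lowest-degree homogeneous parts smooth. Proposition~\ref{prop:integer_root} will then conclude that $g$ is strongly Deligne, Theorem~\ref{more} will yield $D(g(\Z^r),N) \ll_g N e^{-c(\log N)^{\mu(k,r)}}$, and the containment $g(\Z^r) \subseteq h(\Z^\ell)$, which follows from $M$ having integer entries, will give $D(h(\Z^\ell),N) \le D(g(\Z^r),N)$, completing the proof.

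Building such an $M$ is the heart of the argument. Write $h = h^k + h^{k-1} + \cdots + h^j$ with $h^j$ the lowest-degree nonzero homogeneous part, so $j\ge 1$ since $h(\bszero)=0$. Let $\widehat{V}_k^{\sing}, \widehat{V}_j^{\sing} \subseteq \bbP^{\ell-1}$ denote the singular loci of the hypersurfaces $\{h^k=0\}$ and $\{h^j=0\}$; by hypothesis both have codimension at least $r$. I would seek an $r$-dimensional linear subspace $\Lambda \subseteq \bbA^\ell$, defined over $\Q$, whose projectivization $\bbP(\Lambda) \cong \bbP^{r-1}$ (i) is disjoint from $\widehat{V}_k^{\sing} \cup \widehat{V}_j^{\sing}$, (ii) meets $\{h^k=0\}$ and $\{h^j=0\}$ transversally, yielding smooth intersections in $\bbP(\Lambda)$, and (iii) is contained in neither $\{h^k=0\}$ nor $\{h^j=0\}$. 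Condition (i) holds for generic $\Lambda$ by a dimension count; given (i), any intersection of $\bbP(\Lambda)$ with either hypersurface lies in its smooth locus, so a characteristic-zero Bertini-type transversality argument secures (ii) generically; condition (iii) is an additional open condition that also holds generically. Each of these is a nonempty Zariski-open condition on the Grassmannian $G(r,\ell)$, so their intersection is nonempty and Zariski-open. Since $G(r,\ell)$ is $\Q$-rational, its $\Q$-points are Zariski-dense, so a $\Q$-rational $\Lambda$ exists in this open set, and clearing denominators produces the desired integer matrix $M$.

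With $M$ in hand, linearity makes the degree-$i$ homogeneous part of $g$ equal to $h^i(M\bsy)$; by (iii) the top and bottom nonzero homogeneous parts of $g$ are exactly $h^k\circ M$ and $h^j\circ M$, and by (ii) both are smooth. Proposition~\ref{prop:integer_root} then yields that $g$ is strongly Deligne, and Theorem~\ref{more} finishes the proof via the chain of inequalities noted in the first paragraph.

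The main obstacle is the Bertini-style argument producing a $\Q$-rational linear section on which \emph{both} $h^k$ and $h^j$ are simultaneously cut out smoothly. One must verify that each requirement really does define a nonempty Zariski-open subset of $G(r,\ell)$ over $\Q$ in order to intersect them and extract a $\Q$-point; handling $h^k$ and $h^j$ simultaneously is then straightforward, since dense open subsets of an irreducible variety have dense open intersection. In the homogeneous case $j=k$ only one such condition is required, and the argument simplifies accordingly.
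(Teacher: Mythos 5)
Your proposal is correct and follows essentially the same route as the paper: its Proposition~\ref{prop:dim_lower_0} produces exactly such a strongly Deligne polynomial $g\in\Z[x_1,\dots,x_r]$ with $g(\Z^r)\subseteq h(\Z^{\ell})$ via a generic integral linear substitution, and then concludes through Proposition~\ref{prop:integer_root} and Theorem~\ref{more} just as you do. The only difference is in implementation: the paper slices by generic integer hyperplanes one at a time using its Bertini statement (Theorem~\ref{thm:bertini}), tracking the drop in dimension of the singular loci at each of the $\ell-r$ steps, whereas you take a single generic $\Q$-rational $r$-plane in the Grassmannian and avoid the singular loci outright by a dimension count --- both versions resting on the Zariski density of rational points in the parameter space.
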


\begin{theorem} \label{dimlowthm} Suppose $\ell \geq 2$ and $h \in \Z[x_1,\dots,x_{\ell}]$ is intersective of degree $k\geq 2$. Let $r$ be the rank of $h^k$. If $r\geq 3$, OR if $r=2$ and there exists a choice $\{\bsz_p\}_{p\in \P}$ of $p$-adic integer roots of $h$ satisfying $m_p\in \{1,k\}$ for all but finitely many $p$, then (\ref{lvb}) holds. 
\end{theorem}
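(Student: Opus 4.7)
The plan is to reduce Theorem \ref{dimlowthm} to the strongly Deligne case handled by Theorem \ref{more}, but applied in $r$ variables rather than $\ell$. Explicitly, I will construct $\bsc \in \Z^{\ell}$ and an integer matrix $\bsM \in \Z^{\ell \times r}$ of rank $r$ so that the pullback $g(\bss) := h(\bsc + \bsM\bss) \in \Z[s_1,\ldots,s_r]$ is a strongly Deligne polynomial of degree $k$. Since $g(\Z^r) \subseteq h(\Z^{\ell})$, monotonicity of the threshold gives $D(h(\Z^{\ell}),N) \leq D(g(\Z^r),N)$, and Theorem \ref{more} applied to $g$ supplies the stated bound $\ll_h N e^{-c(\log N)^{\mu(k,r)}}$.

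To arrange the Deligne condition on $g$, observe that its top-degree part is $g^k(\bss) = h^k(\bsM\bss)$. Let $\Vhat \subseteq \bbP^{\ell-1}$ denote the projective hypersurface defined by $h^k$, with singular locus $\Vhat^{\sing}$ of dimension $\ell - 1 - r$ by the rank hypothesis. A standard incidence-variety dimension count, combined with Bertini applied to the smooth locus of $\Vhat$, shows that the $(r-1)$-dimensional projective linear subspaces $\bbP(L) \subseteq \bbP^{\ell-1}$ for which $\bbP(L) \cap \Vhat^{\sing} = \emptyset$ and $\bbP(L) \cap \Vhat$ is smooth form a nonempty Zariski-open subset of the Grassmannian. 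Hence rational $\bsM$ exists with $h^k(\bsM\bss)$ smooth as a homogeneous polynomial in $\bss$, and clearing denominators yields an integer $\bsM$ for which $g$ is Deligne.

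To arrange intersectivity of $g$ (and the $m_p \in \{1,k\}$ condition when $r = 2$), let $X_0 \subseteq \mathcal{P}$ be a finite set of primes containing all $p$ where $h$ fails to be Deligne modulo $p$, where the columns of $\bsM$ drop rank modulo $p$, or where pullback could alter multiplicity. For $p \notin X_0$ with $r \geq 3$, $g$ is Deligne modulo $p$ and geometrically irreducible by the argument of Corollary \ref{3var}; Lang--Weil then produces a smooth $\F_p$-point of $\{g = 0\}$, which lifts to a $\Z_p$-root by Hensel's lemma. For each $p \in X_0$ (and throughout for $r = 2$), I impose the coset condition $\bsz_p - \bsc \in \bsM\Z_p^r$ on $\bsc$, where $\bsz_p$ is the chosen $\Z_p$-root of $h$; the Chinese Remainder Theorem yields an integer $\bsc$ satisfying these finitely many $p$-adic conditions simultaneously, and the resulting preimage $\tilde{\bsz}_p \in \Z_p^r$ is a $\Z_p$-root of $g$. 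In the $r = 2$ case, a transversality argument shows that, for generic $\bsM$, the multiplicity of $\tilde{\bsz}_p$ as a root of $g$ equals $m_p$ at all but finitely many primes, so the $m_p \in \{1,k\}$ condition transfers from $h$ to $g$.

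With $g$ Deligne, intersective, and (when $r=2$) satisfying the $m_p$ condition, Corollary \ref{3var} (for $r \geq 3$) or Proposition \ref{thm:main} (for $r = 2$) gives that $g$ is strongly Deligne, and Theorem \ref{more} closes the argument. The main obstacle is the intersectivity step: the substitution must simultaneously satisfy a Zariski-generic condition at the $\Q$-level (to secure Deligne-ness of $g$) and rigid $p$-adic congruences at the finitely many bad primes (to secure intersectivity and the $m_p$ structure); verifying that these are mutually compatible, and that the multiplicity of $p$-adic roots is preserved under generic pullback, is precisely the content of the ``careful dimension-lowering'' alluded to in Section \ref{sec:singular}.
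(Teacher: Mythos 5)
Your opening moves match the first half of the paper's argument: restrict $h$ to an $r$-dimensional linear section chosen via Bertini so that the top-degree form becomes smooth (this is Lemma \ref{lem:lin_poly} and Corollaries \ref{cor:one_step} and \ref{cor:all_steps}), and for large primes recover nonsingular $\F_p$-points via Lang--Weil/Hensel (Lemma \ref{lem:equiv}). The gap is in how you finish: you want the pullback $g(\bss)=h(\bsc+\bsM\bss)$ to be \emph{intersective} (hence strongly Deligne) so that Theorem \ref{more} can be applied to $g$ externally, and you propose to secure $\Z_p$-roots of $g$ at the bad primes by imposing the condition $\bsz_p-\bsc\in\bsM\Z_p^r$ and invoking the Chinese Remainder Theorem. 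That condition is not a congruence modulo a power of $p$; it is an exact $p$-adic statement that the rational affine $r$-plane $\bsc+\bsM\Q_p^r$ passes through the specific point $\bsz_p\in\Z_p^{\ell}$. A $p$-adic root $\bsz_p$ of $h$ will in general have coordinates that, together with $1$, are linearly independent over $\Q$, so it lies on \emph{no} proper affine subspace defined over $\Q$; then no integer $\bsc$ and rank-$r$ integer $\bsM$ with $r<\ell$ satisfy even one such condition, and CRT (which handles finitely many congruence conditions, i.e.\ open conditions, not closed $p$-adic equalities) cannot manufacture $\bsc$. For $r=2$ you ask for this coset condition ``throughout,'' i.e.\ at infinitely many primes, which is hopeless even in principle; and your transversality claim that the multiplicities $m_p$ transfer to $g$ presupposes that the preimages $\tilde{\bsz}_p$ exist, which is the same unjustified step.

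This is precisely the obstruction the paper flags when it says the construction ``cannot be applied externally'': the $r$-variable polynomials one obtains are not the auxiliary polynomials of a single intersective polynomial. The paper's proof never makes $g$ intersective. It keeps $h$, runs the density increment on modified auxiliary polynomials $\tilde{h}_d$ of $h$ (with a modified multiplicity $\tilde{m}_p$ absorbing the fact that at bad primes the chosen roots do not lie on the subspace), chooses $\bsz_p=M\tilde{\bsz}_p$ on the subspace only at good primes (where Corollary \ref{cor:all_steps}(iii), Lemma \ref{lem:equiv}, or the $m_p=k$ option make this possible), and uses the restricted polynomials $g_d(\tilde{\bsx})=\tilde{h}_d(\bss_d+M\tilde{\bsx})$ only inside Lemma \ref{L2I}, where intersectivity is irrelevant: what is actually needed is $g_d(\Z^r)\subseteq\tilde{h}_d(\Z^{\ell})$, integrality of coefficients, the Deligne property modulo every $p$ outside one fixed finite set, bounded content, and coefficient growth $O_h(d^{k^2})$ --- properties (i)--(v), each of which requires a separate verification. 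To repair your write-up you would either have to carry out this ``internal'' argument (proving those five properties for the family built from $h$'s own roots) or find a genuinely different way to produce $p$-adic roots of the restricted polynomial at the bad primes; the coset/CRT device does not do it.
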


\begin{rem} To shed light on the hypotheses of Theorems \ref{dimlowrootthm} and \ref{dimlowthm}, we note that, for $\ell \geq 2$ and a nonconstant homogeneous polynomial $g\in \Z[x_1,\dots,x_{\l}]$ of rank $r$, we have $r\geq 2$ if and only if $g$ is squarefree---in other words, if and only if $g = 0$ defines a reduced variety.
\end{rem}


\subsection{Summary of results} \label{summsec} For this section, we suppose $k,\l\geq 2$ and $h\in \Z[x_1,\dots,x_{\l}]$ with $\deg(h)=k$, and we let $r$ denote the rank of $h^k$. We assume $h$ is  intersective, as otherwise $D(h(\Z^{\l}),N)\gg_h N$. The following bullet points summarize the reach and limitations of our results.

\begin{itemize} \item \textbf{The good:} In addition to previously known results on sums of univariate intersective polynomials (Theorems 1.2 and 5.7 of \cite{ricemax}), we now have that (\ref{lvb}) holds provided $h$, or in the case of (\ref{rootitem}) any translation of $h$, meets any of the following criteria: 

 \begin{enumerate}[(i)] \item $r\geq 3$ (including Deligne with $\l \geq 3$)  \item \label{quaditem} $r=k=2$ (including Deligne with $\l=k=2$)  \item \label{rootitem} $r=2$ (including Deligne with $\ell=2$), $h(\bszero)=0$, and the lowest degree homogeneous part of $h$ has rank at least $2$. This includes as a special case bivariate homogeneous polynomials with nonzero discriminant, which is Theorem \ref{introthm} from the introduction.   \item \label{smoothitem} $r=2$ (including Deligne with $\ell=2$) and $h$ is smoothly intersective, the latter of which in particular holds if any irreducible (over $\Z$) factor of $h$ is geometrically irreducible. Parts of this item can be made slightly more general, as seen in the hypotheses of Proposition \ref{prop:main} and Corollary \ref{prop:main_stronger}.  

\noindent An interesting example of (\ref{smoothitem}) that does not fit into any other category is $h(x,y)=x^3+y^3-q$, where $q$ is a prime congruent to $1$ modulo $90090$ that is not expressable as the sum of two integer cubes, of which there are plenty. This polynomial has no rational root, and it cannot be decomposed into a sum of two univariate intersective polynomials, but it is Deligne and it has simple roots in $\Z_p^2$ for all primes $p$. This example was discussed in a remark following Theorem 1.2 in \cite{ricemax} to illustrate a limitation of that result. \end{enumerate}

\item \textbf{The bad:} The methods utilized here fail to improve on univariate results in the case that $r=1$, or equivalently the case that $h^k$ has a repeated factor. It should be noted that we can only definitively say that it is impossible to reach beyond the cutting edge of the univariate setting if $h=f\circ g$ for some $g\in \Z[x_1,\dots,x_{\l}]$ and $f\in \Z[x]$ with $\deg(f)\geq 2$, because in this case $h(\Z^{\l})\subseteq f(\Z)$. This was hinted at in the introduction with the example $h(x,y)=(x+y)^2$. In this situation, $h^k$ is a proper power of the highest-degree part of $g$, so we definitely have $r=1$. While it is certainly possible to have $r=1$ without $h$ being given as a composition of this sort, our current methods cannot distinguish between the two.

\item \textbf{The ugly:} A more subtle remaining hurdle is the case where $r=2$ (including Deligne with $\l=2$), $k\geq 3$, and $h$ does not meet either of the criteria described in items  (\ref{rootitem}) or (\ref{smoothitem}). Focusing on the $\l=2$ Deligne case, such a polynomial must satisfy $\Delta(h^k)\neq 0$, must be intersective and hence have roots in $\Z_p^{2}$ for all primes $p$, but by Proposition \ref{thm:main}, for infinitely many $p$, all roots in $\Z_p^{2}$ must satisfy $2\leq m_p\leq k-1$. In particular, by Proposition \ref{prop:main}, at least one coefficient in every geometrically irreducible factor of $h$ must fail to be an integer. Finally, by Corollary \ref{2vcor}, if $h$ satisfies $h(0,0)=0$, then the lowest degree part of $h$ must have discriminant $0$.

\noindent One example is   $$ h(x,y) = x^4 - 2y^4 + 2x^2(x + y) + (x + y)^2 = \left(x^2 - \sqrt{2}y^2 + (x + y)\right)\left(x^2 + \sqrt{2}y^2 + (x + y)\right).$$ 
For any  prime $p$ such that $2$ is not a square in $\Q_p$, the only $\Q_p$ roots of $h$ are $(0,0)$ and $(-1,0)$. With these choices for $\bsz_p$, the highest degree nonvanishing part of $h_p$ modulo $p$ is either $(x+y)^2$ or $(x - y)^2$, respectively. In both cases $\Delta(h_p^2)=0$, and hence $h_p$ is not Deligne at this infinite collection of primes. In other words, $h$ is not strongly Deligne, and we cannot claim that (\ref{lvb}) holds.
 
\item \textbf{The future:} The issue in the previous bullet point may represent an avoidable artifact of the method, in which case the upper bound (\ref{lvb}) could be shown to hold for all intersective polynomials satisfying $r\geq 2$. Regarding improved bounds, as noted in Section 2.3 of \cite{Ricebin}, and as implicitly referenced in \cite{Ruz} when noting that the exponent $\mu$ in (\ref{rsb}) could be increased to $1/2$ conditioned on the Generalized Riemann Hypothesis, an upper bound of order $Ne^{-c\sqrt{\log N}}$ appears to be the limit of a Fourier analytic $L^2$ density increment. More specifically, if $(\delta,N)\mapsto (\delta',N')$ represents the change in density and interval size at each step of the iteration, then any further improvement would require either $N'/N$ to decay more slowly than any power of $\delta$, or $\delta'/\delta$ to tend to infinity, as $\delta\to 0$, both of which appear incompatible with the method.  To be clear, this is not at all to say that much stronger upper bounds do not hold, even in the univariate polynomial setting. As discussed in Section \ref{lbsec}, this question is rather murky. However, to achieve such a goal would likely require a fundamentally different proof strategy. 
\end{itemize}






\section{Preliminaries} \label{prelimsec}

In this section we make some preliminary definitions and observations required to execute the sieve-powered $L^2$ density increment strategy utilized to prove Theorem \ref{more}.

\subsection{Fourier analysis and the circle method on $\Z$} We embed our finite sets in $\Z$, on which we utilize an unnormalized discrete Fourier transform. Specifically, for a function $F: \Z \to \C$ with finite support, we define $\widehat{F}: \T \to \C$, where $\T$ denotes the circle  parameterized by the interval $[0,1]$ with $0$ and $1$ identified, by \begin{equation*} \widehat{F}(\alpha) = \sum_{x \in \Z} F(x)e^{-2 \pi ix\alpha}. \end{equation*}
\noindent Given $N\in \N$ and a set $A\subseteq [1,N]$ with $|A|=\delta N$, we examine the Fourier analytic behavior of $A$ by considering the \textit{balanced function}, $f_A$, defined by
$ f_A=1_A-\delta 1_{[1,N]}.$

\noindent As is standard, we decompose the frequency space into two pieces: the points of $\T$ that are close to rational numbers with small denominator, and the complement.

\begin{definition}Given $\gamma>0$ and $Q\geq 1$, we define, for each $a,q\in \N$ with $0\leq a \leq q-1$,
$$\mathbf{M}_{a/q}(\gamma)=\left\{ \alpha \in \T : \Big|\alpha-\frac{a}{q}\Big| < \gamma \right\},  \ \mathbf{M}_q(\gamma)=\bigcup_{(a,q)=1} \mathbf{M}_{a/q}(\gamma), \text{ and }\mathbf{M}'_q(\gamma)=\bigcup_{r\mid q} \mathbf{M}_r(\gamma)=\bigcup_{a=0}^{q-1} \mathbf{M}_{a/q}(\gamma).$$  We then define the \textit{major arcs} by
$$ \mathfrak{M}(\gamma,Q)=\bigcup_{q=1}^{Q} \mathbf{M}_q(\gamma),$$ 
and the \textit{minor arcs} by  
$\mathfrak{m}(\gamma,Q)=\T\setminus \mathfrak{M}(\gamma,Q).
$ 
We note that if $2\gamma Q^2<1$, then \begin{equation} \label{majdisj}\mathbf{M}_{a/q}(\gamma)\cap\mathbf{M}_{b/r}(\gamma)=\emptyset \end{equation}whenever  $a/q\neq b/r$ and  $q,r \leq Q$. 
\end{definition} 

\subsection{Inheritance proposition} As previously noted, we defined auxiliary polynomials to keep track of an inherited lack of prescribed differences at each step of a density increment iteration. The following proposition makes this inheritance precise.

\begin{proposition} \label{inh} Suppose $\l\in \N$, $h\in\Z[x_1,\dots,x_{\ell}]$ is intersective, $d,q\in \N$, and $A\subseteq \N$.

\noindent If $(A-A)\cap h_d(\Z^{\ell})\subseteq \{0\}$ and $A'\subseteq \{a: x+\lambda(q)a \in A\}$ for some $x\in \Z$, then $(A'-A')\cap h_{qd}(\Z^{\ell})\subseteq \{0\}$.
\end{proposition}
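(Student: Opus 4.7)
The plan is to show directly that any element of $(A'-A') \cap h_{qd}(\Z^\ell)$ must be zero, by pushing it forward under multiplication by $\lambda(q)$ and recognizing the resulting quantity as an element of both $A-A$ and $h_d(\Z^\ell)$, which by assumption forces it to vanish.

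More concretely, suppose $a, a' \in A'$ satisfy $a - a' = h_{qd}(\bsn)$ for some $\bsn \in \Z^\ell$. Since $A' \subseteq \{a : x + \lambda(q)a \in A\}$, the elements $x + \lambda(q)a$ and $x + \lambda(q)a'$ both lie in $A$, so
$$\lambda(q)(a - a') = (x + \lambda(q)a) - (x + \lambda(q)a') \in A - A.$$
Now I would establish the key polynomial identity $\lambda(q)\, h_{qd}(\bsn) = h_d(\bsm)$ for a suitable $\bsm \in \Z^\ell$. Since $\lambda$ is completely multiplicative, $\lambda(qd) = \lambda(q)\lambda(d)$, and thus
$$\lambda(q)\, h_{qd}(\bsn) \;=\; \frac{\lambda(q)}{\lambda(qd)} h(\bsr_{qd} + qd\bsn) \;=\; \frac{h(\bsr_{qd} + qd\bsn)}{\lambda(d)}.$$
The Chinese Remainder Theorem construction of $\bsr_{qd}$ and $\bsr_d$ (both reducing to $\bsz_p$ modulo every prime power $p^j \mid d$) guarantees $\bsr_{qd} \equiv \bsr_d \pmod{d}$, so we may write $\bsr_{qd} = \bsr_d + d\bsm_0$ for some $\bsm_0 \in \Z^\ell$. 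Substituting,
$$\frac{h(\bsr_{qd} + qd\bsn)}{\lambda(d)} \;=\; \frac{h(\bsr_d + d(\bsm_0 + q\bsn))}{\lambda(d)} \;=\; h_d(\bsm_0 + q\bsn),$$
which is visibly an element of $h_d(\Z^\ell)$.

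Combining the two facts, $\lambda(q)(a-a') = h_d(\bsm_0 + q\bsn)$ lies in $(A-A) \cap h_d(\Z^\ell) \subseteq \{0\}$, so $\lambda(q)(a-a') = 0$, and since $\lambda(q) \geq 1$ we conclude $a = a'$. There is no genuine obstacle here: the proof is essentially a bookkeeping exercise, and the only place one must be careful is in verifying the congruence $\bsr_{qd} \equiv \bsr_d \pmod{d}$, which follows immediately from the definition of $\bsr_d$ as the CRT representative of $\{\bsz_p \bmod p^j : p^j \| d\}$ and the analogous property for $qd$.
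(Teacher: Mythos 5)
Your proposal is correct and follows essentially the same argument as the paper: both rest on the congruence $\bsr_{qd}\equiv \bsr_d \pmod{d}$ and complete multiplicativity of $\lambda$ to get the identity $\lambda(q)h_{qd}(\bsn)=h_d(\bss+q\bsn)$, then transfer the difference into $(A-A)\cap h_d(\Z^{\ell})$. The only cosmetic difference is that the paper phrases it as a contrapositive while you argue directly that $a=a'$; the content is identical.
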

\begin{proof} Suppose that $A\subseteq \N$, $A'\subseteq \{a : x+\lambda(q)a \in A\}$, and $$a-a'=h_{qd}(\bsn)=h(\bsr_{qd}+qd\bsn)/\lambda(qd)\neq 0$$ for some $\bsn\in \Z^{\ell}$, $a, a' \in A'$. By construction we have that $\bsr_{qd}\equiv \bsr_d$ mod $d$, so there exists $\boldsymbol{s}\in \Z^{\ell}$ such that $\bsr_{qd}=\bsr_d+d\boldsymbol{s}$. Further, $\lambda$ is completely multiplicative, and therefore $$0\neq h_d(\boldsymbol{s}+q\bsn)=h(\bsr_d+d(\boldsymbol{s}+q\bsn))/\lambda(d)=\lambda(q)h_{qd}(\bsn)=\lambda(q)a-\lambda(q)a' \in A-A.$$ Since $a-a'\neq 0$, we have $(A-A)\cap h_d(\Z^{\ell})\not\subseteq \{0\}$, and the contrapositive is established. 
\end{proof} 

\subsection{Sieve definitions and observations} \label{sievesec} As in \cite{ricemax}, we apply a polynomial-specific sieve to our set of considered inputs in order to, roughly speaking, reduce our analysis of local exponential averages to the case of prime moduli, which in the multivariate setting allows for the application of Theorem \ref{delmain}. To this end, for $\l\in \N$, an intersective polynomial $h\in\Z[x_1,\dots,x_{\ell}]$, and each prime $p$ and $d\in \N$, we define $\gamma_{d}(p)$ to be the smallest power such that $\grad h_d$ modulo $p^{\gamma_{d}(p)}$ does not vanish identically {\it as a function on $(\Z/p^{\gamma_d(p)}\Z)^{\l}$}, and we let $j_d(p)$ denote the number of solutions to $\grad h_d = \bszero$ in $(\Z/p^{\gamma_{d}(p)}\Z)^{\l}$. Then, for $d\in \N$ and $Y>0$ we define $$W_d(Y)=\left\{\bsn\in \N^{\l}: \grad h_d(\bsn) \not\equiv \bszero \text{ mod } p^{\gamma_{d}(p)} \text{ for all } p\leq Y \right\}.$$
In the absence of a subscript $d$ in the usage of $\gamma(p), j(p),$ and $W(Y)$, we assume $d=1$, in which case the definitions make sense even for non-intersective polynomials. Further, for any $g\in \Z[x_1,\dots,x_{\ell}]$ and $q\in \N$, we define $$W^{q}(Y)=\left\{\bsn\in \N^{\l}: \grad g(\bsn) \not\equiv \bszero \text{ mod }p^{\gamma(p)} \text{ for all } p\leq Y, \ p^{\gamma(p)}\mid q \right\}.$$ 
Unlike in the univariate case, the size of $W(Y)$ here can be estimated with a straightforward application of the inclusion-exclusion principle, as opposed to a Brun sieve truncation thereof (see Proposition 2.4 in \cite{ricemax}). To achieve this goal, however, we must first look forward and invoke an estimate established in Section \ref{gv2}. For the following two statements, we assume $\ell \geq 2$ and $g\in \Z[x_1,\dots,x_{\l}]$ with $\deg(g)=k\geq 1$.

\begin{lemma} \label{gradconst} If $p$ is prime and $g$ is Deligne modulo $p$, then $j(p)\ll_{k,\l} 1$.

\end{lemma}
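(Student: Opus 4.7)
The plan is to bound $j(p)$ by $(k-1)^\ell$ for all sufficiently large primes $p$ via a Bezout-type count, and to handle the finitely many remaining primes separately. To start, consider the affine scheme $V = V(\partial g/\partial x_1, \ldots, \partial g/\partial x_\ell) \subset \bbA^\ell$ defined by the vanishing of the partial derivatives of $g$, together with its projective closure $\bar V \subset \bbP^\ell$ obtained by homogenizing the defining equations. The points at infinity of $\bar V$ correspond to common zeros in $\bbP^{\ell-1}$ of the top-degree parts $\partial g^k/\partial x_1, \ldots, \partial g^k/\partial x_\ell$; by the Deligne hypothesis, combined with Euler's identity $kg^k = \sum x_i\,\partial g^k/\partial x_i$ and $p \nmid k$, this common vanishing set is empty over $\bar{\F}_p$. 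Hence $\bar V \subset \bbA^\ell$, so $\bar V = V$ is simultaneously projective and affine, and therefore finite. Applying Bezout's theorem to the intersection of $\ell$ hypersurfaces of degree at most $k-1$ in $\bbP^\ell$ then yields $|V(\bar{\F}_p)| \leq (k-1)^\ell$.

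Next, I would argue that $\gamma(p) = 1$ for every prime $p \geq k$. If instead $\gamma(p) > 1$, then by definition $\grad g$ vanishes identically as a function on $\F_p^\ell$, so every one of the $p^\ell$ points of $\F_p^\ell$ lies in $V(\F_p)$, giving $|V(\F_p)| = p^\ell$. But $V(\F_p) \subseteq V(\bar{\F}_p)$ has size at most $(k-1)^\ell < p^\ell$ when $p \geq k$, a contradiction. Hence for such primes we have $j(p) = |V(\F_p)| \leq (k-1)^\ell$.

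The remaining primes $p < k$ form a finite set. For each such $p$, the quantity $j(p)$ is automatically a finite cardinality (being the size of a subset of the finite set $(\Z/p^{\gamma(p)}\Z)^\ell$), so taking the maximum over this finite collection yields a bound that, combined with the uniform $(k-1)^\ell$ bound for large $p$, establishes $j(p) \ll_{k,\ell} 1$. The main technical hurdle is verifying cleanly that the Deligne hypothesis forces $\bar V$ to have no points at infinity; this is the step where the smoothness of $g^k$ in $\bbP^{\ell-1}$ is actually used, via Euler's identity, to convert a projective smoothness condition into a statement about affine complete intersections. The residual dependence of the implicit constant on $g$ comes only through the finitely many small primes $p < k$, which is compatible with the notational conventions of the paper since $g$ is fixed throughout the surrounding discussion.
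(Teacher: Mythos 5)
Your first two paragraphs essentially reproduce the paper's own argument: the paper proves the lemma via Proposition \ref{gradcodim} and Corollary \ref{gradcor}, homogenizing $g$ to $G$, observing that the restriction of $\partial G/\partial x_i$ to the hyperplane at infinity is $\partial g^k/\partial x_i$, and then using Euler's identity together with $p\nmid k$ and the smoothness of $g^k$ to conclude that the gradient system has no projective solutions at infinity, so the gradient locus is zero-dimensional and B\'ezout (Lemma \ref{lem:bezout}) bounds it by a constant depending only on $k$ and $\ell$. Homogenizing the partials directly, as you do, is the same computation. Your counting argument that $\gamma(p)=1$ for all $p\geq k$ is a sensible supplement (the paper simply asserts $\gamma(p)=1$ for Deligne primes inside the proof of Proposition \ref{brunprop}).

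The gap is your treatment of the primes $p<k$. There you only note that $j(p)$ is a finite number and propose to take the maximum over these finitely many primes; but that maximum depends on $g$, not just on $k$ and $\ell$: when $\gamma(p)>1$ your only bound is $j(p)\leq p^{\gamma(p)\ell}$, and $\gamma(p)$ is governed by the power of $p$ dividing the coefficients of $\grad g$ (cf.\ Proposition \ref{idzero}), which is unbounded as $g$ varies with $k,\ell$ fixed. The lemma asserts $j(p)\ll_{k,\ell}1$, and this uniformity is not a notational convenience you may waive: the lemma is applied to the entire family of auxiliary polynomials $h_d$, $d\in\N$, and the bound in (\ref{logk}), as well as Proposition \ref{brunprop} and the estimates of Theorem \ref{standalonethm}, all require the bound on $j_d(p)$ to be independent of $d$. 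Note also that the regime you are waving at is nonempty, so it cannot be dismissed: for example $g(x,y)=x^2y+xy^2+xy$ is Deligne modulo $2$ (here $g^3=xy(x+y)$ is smooth and $2\nmid 3$), yet $\grad g\equiv(y^2+y,\,x^2+x)\pmod 2$ vanishes identically as a function on $\F_2^2$, so $\gamma(2)>1$. Thus to prove the statement as written you would need, for Deligne primes $p<k$ with $\gamma(p)>1$, an actual argument bounding the number of solutions of $\grad g\equiv\bszero$ modulo $p^{\gamma(p)}$ purely in terms of $k$ and $\ell$ (or an argument that this situation cannot arise, which is what the paper implicitly assumes when it identifies $j(p)$ with the number of $\F_p$-points of the gradient locus). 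As it stands, your proposal establishes the lemma only for $p\geq k$.
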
 

\begin{proposition}\label{brunprop} For any $x_1,\dots,x_{\l},Y>0$ we have \begin{equation}\label{sieve} \left|B\cap W(Y)\right| = x_1x_2\cdots x_{\l}\prod_{p\leq Y} \left(1-\frac{j(p)}{p^{\gamma(p)\ell}} \right)+E, \end{equation} where $B=[1,x_1]\times\cdots \times [1,x_{\l}]$, $$E=\begin{cases} O(X^{\l-1}\log^C(Y)) & \text{if }\l=2 \\ O(X^{\l-1})& \text{if }\l \geq 3 \end{cases}, $$ $X=\max\{x_1,\dots,x_{\l}\}$, $C=C(k,\l)$, and the implied constants depend only on $k$, $\l$, the moduli at which $\grad g$ identically vanishes, and the primes $p\leq Y$ modulo which $g$ is not Deligne.
 
\end{proposition}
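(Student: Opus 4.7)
The plan is a direct inclusion--exclusion across primes $p \leq Y$, combined with the Chinese Remainder Theorem and classical lattice-point counting inside the box $B$. Set $\mathcal{P}_Y = \{p \leq Y\}$ and $m_S = \prod_{p \in S} p^{\gamma(p)}$ for $S \subseteq \mathcal{P}_Y$, and let
$$N(S) = \big|\{\bsn \in B : \grad g(\bsn) \equiv \bszero \pmod{p^{\gamma(p)}} \text{ for every } p \in S\}\big|,$$
so that $|B \cap W(Y)| = \sum_{S \subseteq \mathcal{P}_Y}(-1)^{|S|} N(S)$. By CRT, the residues modulo $m_S$ satisfying these vanishing conditions number exactly $\prod_{p \in S} j(p)$, and for each such class $\bsr$ a direct count gives
$$\big|B \cap (\bsr + m_S\Z^{\ell})\big| = \prod_{i=1}^{\ell}\!\left(\frac{x_i}{m_S} + \theta_i(r_i, m_S)\right),$$
with $|\theta_i| \leq 1$ and the crucial identity $\sum_{r=1}^{m_S} \theta_i(r, m_S) = 0$.

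Expanding this product decomposes $N(S)$ into a main term $\prod_{p \in S} j(p) \cdot \prod_i x_i/m_S^{\ell}$ plus boundary corrections indexed by the nonempty $T \subseteq \{1,\dots,\ell\}$ that records which coordinates pick up a $\theta_i$. The signed sum over $S$ of the main terms telescopes into the Euler product $\prod_{i} x_i \cdot \prod_{p \leq Y}(1 - j(p)/p^{\gamma(p)\ell})$, which is the stated leading term in (\ref{sieve}).

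To control the error, invoke Lemma~\ref{gradconst}: $j(p) = O_{k,\ell}(1)$ uniformly across primes at which $g$ is Deligne and $\gamma(p) = 1$, while the finitely many remaining primes contribute a fixed multiplicative factor absorbed into the implied constant. For each proper nonempty $T \subsetneq \{1,\dots,\ell\}$, the crude estimate $|\theta_i| \leq 1$ yields an aggregate contribution $\ll X^{\ell - |T|}\prod_{p \leq Y}(1 + O(1/p^{\gamma(p)(\ell - |T|)}))$. The product is $O(1)$ when $\ell - |T| \geq 2$; when $\ell - |T| = 1$, Mertens's theorem gives $\prod_{p \leq Y}(1 + O(1/p)) \ll (\log Y)^{C}$. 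For $\ell = 2$ this yields the stated log factor, while for $\ell \geq 3$ the logarithmic contribution that would arise at $|T| = \ell - 1$ is removed by a second-pass estimate that exploits the mean-zero cancellation of $\theta$ along the remaining coordinate, recovering the clean bound $O(X^{\ell-1})$.

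The main obstacle is the full-boundary term $T = \{1,\dots,\ell\}$, where no $x_i/m_S$ factor remains and absolute bounds give $\sum_S \prod_{p \in S} j(p) = \prod_{p \leq Y}(1 + j(p))$, which is exponentially divergent in $\pi(Y)$. Cancellation is essential here. The univariate identity $\sum_r \theta_i(r, m_S) = 0$ tensors to $\sum_{\bsr \in (\Z/m_S\Z)^{\ell}} \prod_i \theta_i(r_i, m_S) = 0$, so $\sum_{\bsr \in \mathcal{A}_S} \prod_i \theta_i = -\sum_{\bsr \in \mathcal{A}_S^c} \prod_i \theta_i$, where $\mathcal{A}_S$ is the set of admissible residues modulo $m_S$. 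Expanding $\mathcal{A}_S^c = \bigcup_{p \in S}\{\bsr : \grad g(\bsr) \equiv \bszero \pmod{p^{\gamma(p)}}\}$ via a second inclusion--exclusion on subsets of $S$ reduces this contribution to sums of precisely the same shape as the already-handled $|T| < \ell$ cases, completing the argument.
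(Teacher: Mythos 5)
Your overall skeleton matches the paper's proof: inclusion--exclusion over the primes $p\le Y$, the Chinese Remainder Theorem to see that the admissible classes modulo $m_S$ number $\prod_{p\in S}j(p)$, Lemma~\ref{gradconst} to get $j(p)=O_{k,\ell}(1)$ at Deligne primes with the finitely many bad primes absorbed into the constant, and a Mertens-type product producing the $\log^C Y$ factor when $\ell=2$. The paper, however, never splits the per-class lattice-point error by boundary sets $T$: it simply writes $\mathcal{A}_{p_1\cdots p_s}$ as the main term plus a remainder bounded by $KC^s\left(X/(p_1\cdots p_s)\right)^{\ell-1}$ and sums over all subsets into $\prod_{p\le Y}\left(1+C/p^{\ell-1}\right)$, which is $\ll\log^C Y$ for $\ell=2$ and $O(1)$ for $\ell\ge3$; had you stopped at that level of granularity, your argument would coincide with the paper's.

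The genuine gap is in the step you yourself call the main obstacle, the full-boundary term $T=\{1,\dots,\ell\}$. First, a slip: the complement of $\mathcal{A}_S$ is $\bigcup_{p\in S}\{\bsr:\grad g(\bsr)\not\equiv\bszero \ (\mathrm{mod}\ p^{\gamma(p)})\}$, not the union of the vanishing sets. More seriously, carrying out your second inclusion--exclusion correctly does not yield sums of the already-handled shape. For a fixed witness set $T\subseteq S$ of non-vanishing primes, summing $\prod_i\theta_i(r_i,m_S)$ over the residues unconstrained modulo $m_S/m_T$ collapses it to $\prod_i\theta_i(a_i,m_T)$: you still have $\ell$ boundary factors (now at modulus $m_T$, with non-vanishing rather than vanishing constraints), and no factor $x_i/m$ is recovered. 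Moreover this inner sum is independent of $S\setminus T$, so the outer alternating sum over $S$ uses $\sum_{S\supseteq T}(-1)^{|S|}=0$ unless $T$ is the full set of primes $p\le Y$, and the sole surviving term is $\sum_{\bsa}\prod_i\theta_i\bigl(a_i,\prod_{p\le Y}p^{\gamma(p)}\bigr)$ taken over the residues that survive the sieve --- which is exactly the quantity you set out to bound, in disguise. The reduction is therefore circular, and the exponential blow-up $\prod_{p\le Y}(1+j(p))$ is never tamed; the asserted but unexplained ``second-pass estimate'' for $\ell\ge3$ at $|T|=\ell-1$ has the same defect, since cancellation over the set of admissible classes is claimed rather than proved. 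As it stands, the central new step of your proposal does not close, so the proof is incomplete; to match the proposition you either need the paper's blunt per-subset bound $O\bigl(C^s(X/(p_1\cdots p_s))^{\ell-1}\bigr)$ or an actual argument for the large-modulus regime, and the identity $\sum_{r}\theta_i(r,m)=0$ by itself does not supply one.
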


\begin{proof} Fix $x_1,\dots,x_{\l},Y>0$ and let $X=\max\{x_1,\dots,x_{\l}\}$. For primes $p_1<p_2<\cdots<p_s$, we let \begin{equation*}\mathcal{A}_{p_1\cdots p_s}=\mathcal{A}_{p_1\cdots p_s}(x_1,\dots,x_{\l}) = \left|\left\{\bsn \in B: \grad g (\bsn) \equiv \bszero\text{ mod } p_i^{\gamma(p_i)} \ \text{for all } 1\leq i \leq s  \right\}\right|.\end{equation*} 
 
\noindent Fixing $Y>0$ and letting $r$ denote the number of primes that are at most $Y$, we have by the Chinese Remainder Theorem and the inclusion-exclusion principle that \begin{equation}\label{brunalt1} \left|B \cap  W(Y)\right|= \sum_{s=0}^r  (-1)^s \sum_{p_1<\dots < p_s\leq Y} \mathcal{A}_{p_1\cdots p_s}.\end{equation} Further,  \begin{equation}\label{AP1} \mathcal{A}_p = \frac{j(p)}{p^{\gamma(p)\ell}}x_1\cdots x_{\l} + R_p, \end{equation} where $|R_p|\ll_{\l} j(p)(X/p^{\gamma(p)})^{\ell-1}$.  We trivially have $j(p) \leq p^{\gamma(p)\ell}$, while if $g$ is Deligne modulo $p$, then $\gamma(p)=1$ and, by Lemma \ref{gradconst}, $j(p)\leq C=C(k,\l)$. In particular, we can apply the Chinese Remainder Theorem again and extend (\ref{AP1}) to \begin{equation} \label{AP2}  \mathcal{A}_{p_1\cdots p_s} = x_1\cdots x_{\l} \prod_{i=1}^s\frac{j(p_i)}{p_i^{\gamma(p_i)\ell}} + R_{p_1\cdots p_s}, \end{equation} where $|R_{p_1\cdots p_s}| \leq KC^s(X/p_1\cdots p_s)^{\l-1}$, where $K$ depends only on the moduli at which $\grad g$ identically vanishes and the primes $p\leq Y$ modulo which $g$ is not Deligne.  Now, by (\ref{brunalt1}) and (\ref{AP2}) we have 
\begin{align*}  \left|B \cap  W(Y)\right|&= \sum_{s=0}^r  (-1)^s \sum_{p_1<\dots < p_s\leq Y} \mathcal{A}_{p_1\cdots p_s}\\ &=\sum_{s=0}^r  (-1)^s \sum_{p_1<\dots < p_s\leq Y}\left(x_1\cdots x_{\l} \prod_{i=1}^s\frac{j(p_i)}{p_i^{\gamma(p_i)\ell}} + R_{p_1\cdots p_s}\right) \\ &= x_1x_2\cdots x_{\l}\prod_{p\leq Y} \left(1-\frac{j(p)}{p^{\gamma(p)\ell}} \right)+E, \end{align*} where \begin{align*}|E| &\leq KX^{\l-1}\sum_{s=0}^r \sum_{p_1<\dots < p_s\leq Y} \frac{C^s}{(p_1\cdots p_s)^{\l-1}} = KX^{\l-1}\prod_{p\leq Y}\left(1+\frac{C}{p^{\l-1}}\right),
	\end{align*} and the estimate follows. \end{proof} 

\subsection{Control over gradient vanishing: Part I} A potential hazard of the density increment method is the possibility that, as $d$ grows, $\grad h_d$ could identically vanish at a larger and larger collection of moduli. This section is dedicated to establishing that, for strongly Deligne polynomials, this does not occur. We begin by noting that the collection of moduli at which a polynomial identically vanishes is firmly controlled in terms of its degree and the gcd of its coefficients. Throughout this section we assume $k,\l\in \N$.

\begin{definition} We define a \textit{multi-index} to be an $\ell$-tuple $\bsi=(i_1,\dots,i_{\ell})$ of nonnegative integers. We let $|\bsi|=i_1+\dots+i_{\ell}$, we let $\bsi ! =i_1!\cdots i_{\l}!$, and for $\bsx=(x_1,\dots,x_{\ell})$, we let $\bsx^{\bsi}=x_1^{i_1}\cdots x_{\ell}^{i_{\ell}}$. Finally, for a polynomial $g(\bsx)$, we let $\partial^{\bsi}g=\frac{\partial^{|\bsi|} g}{\partial x_1^{i_1}\cdots \partial x_{\ell}^{i_{\ell}}}$.

\end{definition}

\begin{proposition} \label{idzero} If  $g(\bsx)= \sum_{|\boldsymbol{i}|\leq k}a_{\boldsymbol{i}}\bsx^{\boldsymbol{i}}  \in \Z[x_1,\dots,x_{\l}]$ is identically zero modulo $q\in \N$, then $$q \mid k!\gcd(\{a_{\bsi}\}). $$
\end{proposition}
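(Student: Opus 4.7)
The plan is to exploit the duality between the basis of multivariate binomial polynomials and iterated finite-difference functionals at the origin. Define $\binom{\bsx}{\bsi} := \prod_{s=1}^{\ell}\binom{x_s}{i_s}$, and for $1\leq s \leq \ell$ let $\Delta_s f(\bsx) = f(\bsx+\bse_s)-f(\bsx)$, with $\Delta^{\bsi} := \Delta_1^{i_1}\cdots\Delta_{\ell}^{i_{\ell}}$. First I would verify, using Pascal's identity in each variable, that $(\Delta^{\boldsymbol{j}}\binom{\bsx}{\bsi})(\bszero)=\delta_{\bsi,\boldsymbol{j}}$ for $|\boldsymbol{j}|,|\bsi|\leq k$. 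This yields that the $\binom{\bsx}{\bsi}$ with $|\bsi|\leq k$ form a basis for polynomials of degree at most $k$, and that any $g\in\Z[x_1,\dots,x_{\ell}]$ of degree at most $k$ has a unique expansion $g=\sum_{|\bsi|\leq k}c_{\bsi}\binom{\bsx}{\bsi}$ with $c_{\bsi}=(\Delta^{\bsi}g)(\bszero)\in\Z$ (integrality coming from the fact that finite differences preserve integer-coefficient polynomials).

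With this setup in place, the proof reduces to three short observations. Since $g\equiv 0 \pmod{q}$ on $\Z^{\ell}$, so is every iterated difference $\Delta^{\bsi}g$, and therefore $q\mid c_{\bsi}$ for every $\bsi$. Next, from $\binom{x_s}{i_s}=x_s(x_s-1)\cdots(x_s-i_s+1)/i_s!$ one sees that the coefficients of $\binom{\bsx}{\bsi}$ are rationals with denominator dividing $\bsi! := \prod_s i_s!$, which divides $k!$ whenever $|\bsi|\leq k$ by the multinomial identity $k!/\bsi! = \binom{k}{\bsi}\in\Z$; hence $k!\binom{\bsx}{\bsi}\in\Z[x_1,\dots,x_{\ell}]$. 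Writing $\binom{\bsx}{\bsi}=\sum_{\boldsymbol{j}}s_{\bsi,\boldsymbol{j}}\bsx^{\boldsymbol{j}}$ and comparing coefficients of $\bsx^{\boldsymbol{j}}$ in $g=\sum c_{\bsi}\binom{\bsx}{\bsi}$ yields $k!\,a_{\boldsymbol{j}}=\sum_{\bsi}c_{\bsi}\bigl(k!\,s_{\bsi,\boldsymbol{j}}\bigr)$, an integer linear combination of integers each divisible by $q$. Thus $q\mid k!\,a_{\boldsymbol{j}}$ for all $\boldsymbol{j}$, and the conclusion $q\mid k!\gcd(\{a_{\bsi}\})$ follows from the identity $\gcd(\{k!\,a_{\boldsymbol{j}}\})=k!\gcd(\{a_{\boldsymbol{j}}\})$.

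I do not anticipate a serious obstacle here: the statement is essentially a bookkeeping consequence of the classical theory of integer-valued polynomials, and the factor $k!$ in the conclusion matches precisely the least common multiple of denominators appearing when one re-expands the binomial basis in the monomial basis. The points that most merit care in the write-up are the integrality of the $c_{\bsi}$ and the divisibility $\bsi!\mid k!$ for $|\bsi|\leq k$; both are elementary but worth stating explicitly rather than leaving as folklore.
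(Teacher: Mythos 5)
Your proof is correct and takes essentially the same route as the paper's: expand $g$ in the product-binomial basis $\binom{\bsx}{\bsi}$, observe that the coefficients in that basis are divisible by $q$, and clear denominators using $\bsi!\mid k!$ for $|\bsi|\leq k$. The only difference is presentational: the paper cites the fact that these products form a $\Z$-basis for integer-valued polynomials (applied to $g/q$), whereas you verify it self-containedly via the finite-difference duality $(\Delta^{\boldsymbol{j}}\binom{\bsx}{\bsi})(\bszero)=\delta_{\bsi,\boldsymbol{j}}$ and deduce $q\mid c_{\bsi}$ directly.
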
 

\begin{proof} We first note that $g$ is identically zero as a function on $\Z/q\Z$ if and only if the polynomial $g/q$ is integer-valued. In this case, since products of binomial coefficients $$\binom{\bsx}{\bsi}=\binom{x_1}{i_1}\cdots \binom{x_{\l}}{i_{\l}}=\frac{x(x-1)\dots(x-i_1+1)}{i_1!} \cdots \frac{x(x-1)\dots(x-i_{\l}+1)}{i_{\l}!}$$form a $\Z$-basis for integer-valued polynomials in $\Q[x_1,\dots,x_{\l}]$, we can write $g(x)=\sum_{|\bsi|\leq k}qb_{\bsi}\binom{\bsx}{\bsi}$ for $b_{\bsi} \in \Z$. In particular, by clearing denominators we see that the coefficients of $k!g$ are all divisible by $q$, and the proposition follows.
\end{proof}

\noindent Further, we note that the gcd of the coefficients of each partial derivative of a polynomial $h\in \Z[x_1,\dots,x_{\l}]$ divides $k!$ times the gcd of the nonconstant coefficients of $h$. With this in mind, the following definition and proposition complete the task at hand. 

\begin{definition} For    $h(\bsx)= \sum_{|\boldsymbol{i}|\leq k}a_{\boldsymbol{i}}\bsx^{\boldsymbol{i}}  \in \Z[x_1,\dots,x_{\l}]$, we define $$\cont(h)=\gcd(\{a_{\boldsymbol{i}}: |\boldsymbol{i}|>0\}).$$ 

\noindent We note that our use of $\cont(h)$ does not precisely align with the standard notion of the \textit{content} of a polynomial, as we exclude the constant coefficient.  

\end{definition}


\begin{proposition}\label{content} If $h\in \Z[x_1,\dots,x_{\l}]$ is a strongly Deligne polynomial of degree $k$, then  $$\cont(h_d) \ll_h 1. $$

\end{proposition}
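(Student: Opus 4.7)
The plan is to bound $v_p(\cont(h_d))$ separately at each prime and note that only finitely many primes contribute. Taylor-expanding around $\bsr_d$ gives the coefficient of $\bsx^{\bsi}$ in $h_d(\bsx) = h(\bsr_d + d\bsx)/\lambda(d)$ as
\[ c_{\bsi}(d) = \frac{d^{|\bsi|} \, \partial^{\bsi} h(\bsr_d)}{\bsi!\, \lambda(d)}, \]
with $p$-adic valuation $(|\bsi| - m_p) v_p(d) + v_p(\partial^{\bsi} h(\bsr_d)) - v_p(\bsi!)$, using $v_p(\lambda(d)) = m_p v_p(d)$. Since $\cont(h_d) = \gcd_{|\bsi| > 0} c_{\bsi}(d)$, the task reduces to exhibiting, for every prime $p$, some multi-index $\bsi$ with $|\bsi| > 0$ whose valuation $v_p(c_{\bsi}(d))$ is bounded uniformly in $d$.

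For $p \notin X(h)$, the strongly Deligne hypothesis makes $h_d \bmod p$ a nonzero Deligne polynomial. A direct inspection of the formula shows that $h_d \bmod p$ is either identically zero or has degree at least $1$: when $p \nmid d$, change of variables by the unit $d$ preserves degree $k$, while when $p \mid d$, only the $|\bsk| = m_p$ Taylor terms survive mod $p$, producing either $0$ or a polynomial of degree $m_p \ge 1$. Being Deligne rules out the zero case, so $h_d \bmod p$ is nonconstant, some nonconstant coefficient is a unit mod $p$, and $v_p(\cont(h_d)) = 0$. Thus only primes in the finite set $X(h)$ can divide $\cont(h_d)$.

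For $p \in X(h)$ I bound $v_p(\cont(h_d))$ by splitting on $m_p$. If $m_p = k$, any top-degree $\bsi$ with $a_{\bsi} \ne 0$ gives $v_p(c_{\bsi}(d)) = v_p(a_{\bsi})$, independent of $d$. If $m_p < k$, I pick any $\bsi^*$ with $|\bsi^*| = m_p$ and $\partial^{\bsi^*} h(\bsz_p) \ne 0$ (which exists by the definition of $m_p$) and set $N^* := v_p(\partial^{\bsi^*} h(\bsz_p)) < \infty$. When $v_p(d) > N^*$, the congruence $\bsr_d \equiv \bsz_p \pmod{p^{v_p(d)}}$ together with $N^* < v_p(d)$ pins $v_p(\partial^{\bsi^*} h(\bsr_d)) = N^*$, yielding $v_p(c_{\bsi^*}(d)) = N^* - v_p(\bsi^*!)$. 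When $v_p(d) \le N^*$, I fall back to a top-degree coefficient, which satisfies $v_p(c_{\bsi}(d)) = (k - m_p) v_p(d) + v_p(a_{\bsi}) \le (k - m_p) N^* + v_p(a_{\bsi})$. Either way $v_p(\cont(h_d))$ is bounded by an $h$-dependent constant.

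The central obstacle is the $m_p < k$ regime: the top-degree coefficients have valuation growing linearly in $v_p(d)$, while the degree-$m_p$ coefficients can have large valuation when $v_p(d)$ is small. The two-regime argument with crossover threshold $N^*$ ensures that at least one of these candidates has bounded valuation, and multiplying the per-prime bounds over the finite set $X(h)$ yields $\cont(h_d) \le \prod_{p \in X(h)} p^{C_p}$, i.e.\ $\cont(h_d) \ll_h 1$.
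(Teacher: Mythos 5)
Your proof is correct and follows essentially the same route as the paper's: you discard primes outside $X(h)$ because $h_d$ is Deligne, hence nonconstant, modulo such primes, and for $p\in X(h)$ you make the same two-regime split at the threshold $N^*=v_p(\partial^{\bsi^*}h(\bsz_p))$ (the paper's $v_1(p)$), using the degree-$m_p$ Taylor coefficient when $v_p(d)$ is large and a top-degree coefficient $\tfrac{d^k}{\lambda(d)}a_{\bsi}$ otherwise. One small inaccuracy: when $p\mid d$ it is only the terms of degree strictly greater than $m_p$ that are forced to vanish modulo $p$ (coefficients of degree below $m_p$ need not), but this does not affect your argument, since the Deligne property of $h_d$ modulo $p\notin X(h)$ already rules out any constant reduction, which is all you use.
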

 
\begin{proof} Suppose  $d\in \N$ and $h\in \Z[x_1,\dots,x_{\l}]$ is a strongly Deligne polynomial of degree $k$. Let $\{\bsz_p\}_{p\in \P}$ and $X$ denote the choice of $p$-adic integer roots and the finite set of primes, respectively, guaranteed by the strongly Deligne condition. In particular, $h_d$ is Deligne modulo $p$ for all $p\notin X$. Because constant polynomials are not Deligne, $\cont(h_d)$ can only be divisible by primes in $X$.

\noindent Recalling that $h_d(\bsx)=h(\bsr_d+d\bsx)/\lambda(d)$, we make the trivial note that for any multi-index $\bsi$ with $|\bsi|=k$, the $\bsx^{\bsi}$ coefficient of $h_d$ is precisely $d^k/\lambda(d)$ times the corresponding coefficient $a_{\bsi}$ of $h$. In particular, \begin{equation} \label{dk} \cont(h_d) \mid \frac{d^k}{\lambda(d)}a_{\bsi} \ \text{whenever} \ |\bsi|=k.
\end{equation}

\noindent Now fix $p\in X$. By definition of the multiplicity $m_p$, there exists a multi-index $\bsi$ with $|\bsi|=m_p$ and $\partial^{\bsi}h(\bsz_p)\neq 0$, so in particular $\partial^{\bsi}h(\bsz_p)$ has some finite $p$-adic valuation $v_1(p)$. 

\noindent If $p^{v_1(p)+1}\nmid d$, then by (\ref{dk}), we have that $p^{kv_1(p)+v_2(p)+1}\nmid \cont(h_d)$, where $v_2(p)$ is the minimum $p$-adic valuation amongst the degree-$k$ coefficients of $h$. Now suppose that $p^{v_1(p)+1}\mid d$. 
 
\noindent Let $b_{\bsi}$ denote the $\bsx^{\bsi}$ coefficient of $h_d$. By Taylor's formula, we have that $$b_{\bsi}=\frac{d^{m_p}}{\lambda(d)}\frac{\partial^{\bsi}h(\bsr_{d})}{\bsi !}.$$ By definition of $\lambda$ we have $p\nmid (d^{m_p}/\lambda(d))$, and since $\bsr_d\equiv \bsz_p \ \text{mod }p^{v_1(p)+1}$ and $p^{v_1(p)+1}\nmid\partial^{\bsi}h(\bsz_{p})$, we have that $p^{v_1(p)+1}\nmid b_{\bsi}$. In either case, we have that $p^{kv_1(p)+v_2(p)+1}\nmid \cont(h_d)$, and hence $$\cont(h_d)\leq \prod_{p\in X}p^{kv_1(p)+v_2(p)+1} \ll_h 1,  $$ as required.\end{proof}

\noindent For strongly Deligne $h\in\Z[x_1,\dots,x_{\ell}]$ with $\deg(h)=k$, we have now established control over not only the error term in the size of $W_d(Y)$, but also the main term, since Lemma \ref{gradconst}, Proposition \ref{idzero}, and Proposition \ref{content}  give \begin{equation}\label{logk} \prod_{p\leq Y} \left(1-\frac{j_d(p)}{p^{\gamma_d(p)\ell}} \right) \gg_h \prod_{C=C(h)\leq p\leq Y} \left(1-\frac{C}{p^{\l}} \right) \gg_h 1\end{equation} for all $d \in \N$ and $Y\geq 2$. 

\subsection{Summary of new exponential sum estimates} \label{standalone} In Section \ref{expest}, we combine new and old techniques to establish the sieved multivariate exponential sum estimates necessary to prove Theorem \ref{more}. These estimates are obtained through a sequence of lemmas presented in the context of the larger proof, so we separately present a summary here in case the estimates are of independent interest to the reader.

For the following theorem, a multivariate generalization of Theorem 2.7 in \cite{ricemax}, we utilize all the sieve-related notation and definitions from Section \ref{sievesec}. Further, we use $\tau$ and $\omega$ to denote the divisor and distinct prime divisor counting functions, respectively, as well as $\phi$ to denote the Euler totient function.

\begin{theorem}\label{standalonethm} For $k,\ell \geq 2$,  $g(\bsx)=\sum_{|\bsi|\leq k} a_{\bsi} \bsx^{\bsi} \in \Z[x_1,\dots,x_{\ell}]$, $J=\sum_{|\bsi|\leq k} |a_{\bsi}|$, and $a,q\in \N$, the following estimates hold:

\begin{enumerate}[(i)]  \item \label{majitem} \textnormal{\textbf{Major arc estimate:}} If $X,Y > 0$ and  $\alpha=a/q+\beta$, then \begin{align*}\sum_{\bsn \in [1,X]^{\ell} \cap W(Y)}e^{2\pi i g(\bsn)\alpha}&=q^{-\ell} \prod_{\substack{ p\leq Y \\ p^{\gamma(p)}\nmid q}}\left(1-\frac{j(p)}{p^{\gamma(p)\ell}}
\right)\sum_{\boldsymbol{s}\in \{0,\dots,q-1\}^{\ell} \cap W^{q}(Y)}e^{2\pi i g(\boldsymbol{s})a/q}\int_{[0,X]^{\ell}}e^{2\pi i g(\bsx)\beta}d\bsx\\\\&\qquad + O_{k,\l}\left(qE(1+JX^{k}|\beta|)^{\ell}\right)  ,\end{align*} where $E$ is as in Proposition \ref{brunprop}. \item \label{locitem} \textnormal{\textbf{Local cancellation:}}  If $(a,q)=1$ and $Y>0$, then $$\left| \sum_{\boldsymbol{s}\in \{0,\dots,q-1\}^{\ell} \cap W^{q}(Y)}e^{2\pi i g(\boldsymbol{s})a/q} \right| \leq C_1\begin{cases} (k-1)^{\l \omega(q)}\Phi(q,\l) q^{\ell/2} &\text{if }q\leq Y \\ C_2^{\omega(q)}\tau(q)^{\l}q^{\ell-1/k} &\text{for all }q \end{cases},$$ where $C_2=C_2(k)$, $\Phi(q,2)=(q/\phi(q))^{C_2}$, $\Phi(q,\l)\ll_{k,\l} 1$ for $\l\geq 3$, and $C_1$ depends only on the moduli at which $\grad g$ identically vanishes and the primes $p\leq Y$ dividing $q$ modulo which $g$ is not Deligne.  \item \label{minitem} \textnormal{\textbf{Minor arc estimate:}} If $X,Y,Z\geq 2$, $YZ\leq X$, $(a,q)=1$, and $|\alpha-a/q|<q^{-2}$, then
\end{enumerate}
$$\left|\sum_{\bsn \in [1,X]^{\ell} \cap W(Y)} e^{2\pi \i g(\bsn)\alpha} \right| \ll_{k,\l} \textnormal{cont}(g)^6(\log Y)^{ek} X^{\ell}\left(e^{-\frac{\log Z}{\log Y}}+\left(J\log^{k^2}(JqX)\left(q^{-1}+\frac{Z}{X}+\frac{qZ^k}{X^k}\right) \right)^{2^{-k}} \right). $$

\end{theorem}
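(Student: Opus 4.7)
The three parts decouple: (i) is a residue-class-plus-sieve approximation, (ii) reduces via the Chinese Remainder Theorem and the sieve to Deligne's Theorem \ref{delmain} (with a Weyl fallback for large $q$), and (iii) is Weyl differencing through the sieve, following the univariate template of \cite{ricemax}.

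For (i), I would partition by residue mod $q$, writing $\bsn=\bss+q\bsm$ with $\bss\in\{0,\dots,q-1\}^{\ell}$, so that $e^{2\pi ig(\bsn)a/q}=e^{2\pi ig(\bss)a/q}$ pulls out and depends only on $\bss$. The sieve primes split into two classes: those $p\leq Y$ with $p^{\gamma(p)}\mid q$ have their condition fully captured by $\bss$ mod $q$, forcing $\bss\in W^q(Y)$; those with $p^{\gamma(p)}\nmid q$ are coprime to $q$, and for each fixed $\bss$ the condition on $\bsn$ translates to a condition on $\bsm$ handled by Proposition \ref{brunprop}, producing the product $\prod_{p\leq Y,\ p^{\gamma(p)}\nmid q}(1-j(p)/p^{\gamma(p)\ell})$. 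The oscillatory factor $e^{2\pi ig(\bsn)\beta}$ is then compared to $\int e^{2\pi ig(\bss+q\bsx)\beta}\,d\bsx$ using $|\grad g|=O_{k}(JX^{k-1})$; summing the telescoped comparison error and the Proposition \ref{brunprop} error $E$ across the $q^{\ell}$ cells produces the stated remainder $O(qE(1+JX^k|\beta|)^{\ell})$.

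For (ii), I exploit CRT-multiplicativity of the inner sum across coprime parts of $q$, reducing to $q=p^a$. When $q\leq Y$, we have $p\leq Y$, so the sieve condition $\grad g\not\equiv\bszero\pmod{p^{\gamma(p)}}$ excises singular contributions; for $a=1$ and $g$ Deligne mod $p$ this is precisely Theorem \ref{delmain}, yielding $(k-1)^{\ell}p^{\ell/2}$, while for $a\geq 2$ a standard stratification of prime-power exponential sums uses Hensel lifting at the nonsingular residues (which the sieve guarantees) to extract square-root cancellation $p^{a\ell/2}$. The non-Deligne primes are finite and absorbed into $C_1$; the $\Phi(q,\ell)$ factor emerges from the same bad-prime bookkeeping as in the proof of Proposition \ref{brunprop}, which retains a $(q/\phi(q))^{C_2}$ loss when $\ell=2$ and is bounded otherwise. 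For the universal bound, I freeze all but one variable and apply the classical Weyl estimate $|\sum_s e^{2\pi ig_0(s)a/q}|\ll_k\tau(q)q^{1-1/k}$ to the resulting univariate polynomial, trivially bounding the remaining $q^{\ell-1}$ variables, and accumulating $C_2^{\omega(q)}$ across prime factors by CRT.

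For (iii), along a distinguished variable $x_1$ I would first peel off the sieve membership by inclusion-exclusion over divisors of $\prod_{p\leq Y}p^{\gamma(p)}$ (costing $(\log Y)^{ek}$, with the number of exceptional moduli and the exponent-weight controlled by Propositions \ref{idzero} and \ref{content}, hence the $\cont(g)^6$ dependence), then Weyl-differentiate $k-1$ times in $x_1$, obtaining the $2^{-k}$ exponent and reducing to a linear exponential sum whose frequency is essentially $k!$ times the leading coefficient of $x_1^k$, times $\alpha$, times a product of difference parameters $\bst$. The hypothesis $|\alpha-a/q|<q^{-2}$ fed through the standard Vinogradov-Weyl analysis produces the $q^{-1}+Z/X+qZ^k/X^k$ factor, while the separation $YZ\leq X$ turns the sieve-peeling loss into the exponential saving $e^{-\log Z/\log Y}$.

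\textbf{The main obstacle} I anticipate is in (ii) at prime powers $p^a$ with $a>\gamma(p)$: obtaining genuine square-root cancellation uniformly in $a$, while simultaneously preserving the precise multiplicative dependence on $\omega(q)$ and $\tau(q)$ (and the delicate $(q/\phi(q))^{C_2}$ loss when $\ell=2$), and verifying that $C_1$ depends only on the stipulated data and not on $q$. The density-increment iteration is very sensitive to any extra logarithmic loss in dimension $2$, so this bookkeeping must be exact.
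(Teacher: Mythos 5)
Your overall architecture coincides with the paper's. For (i), your residue-class decomposition plus a telescoping comparison against the integral is exactly what the paper formalizes via a multivariable partial summation formula (Lemma \ref{mps}) fed by the count from Proposition \ref{brunprop}; the error $O(qE(1+JX^k|\beta|)^{\l})$ comes out the same way. For (ii), the CRT reduction, Deligne's bound at primes $p\le Y$, and a one-variable complete-sum estimate for the remaining prime powers is also the paper's route. Two caveats there: at prime powers $p^v$ with $v\ge 2\gamma(p)$ the sieved sum does not merely show square-root cancellation, it vanishes identically (the Hensel/orthogonality argument you invoke gives exactly this), and that vanishing is what keeps the per-prime constants tight enough to produce $(k-1)^{\l\omega(q)}\Phi(q,\l)$ with $\Phi(q,\l)\ll_{k,\l}1$ for $\l\ge 3$; and in the universal bound, freezing all but one variable is not enough by itself, since for some frozen slices the specialized univariate polynomial degenerates modulo $p^v$ and gives no cancellation --- the paper's content/gcd counting over slices (the source of the $(v+1)^{\l}$, $\tau(q)^{\l}$ factors and part of $C_1$) is needed and is absent from your sketch.

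The genuine gap is in (iii). Your inclusion-exclusion produces terms indexed by all products $D$ of $p^{\gamma(p)}$ with $p\le Y$, and these $D$ run far beyond $X$. For $D\le Z$ your Weyl-on-progressions step is fine, but for $D>Z$ your proposal offers no argument: for an integer $n$ at which $\grad f(n,\tilde{\bsn})$ vanishes modulo many (possibly all) sieve moduli, the inner alternating sum over such $D$ has enormously many terms, so termwise absolute values are useless, and the Rankin-type bound $\sum_{D>Z}k^{\omega(D)}/D\ll e^{-\log Z/\log Y}(\log Y)^{ek}$ cannot be applied until one has first exhibited cancellation among the signs $(-1)^{\omega(D)}$. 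The paper's Lemma \ref{weyl3} devotes a dedicated pairing argument to exactly this point: the injections $D\mapsto q_nD$ and $D\mapsto D/q_n$, with $q_n$ the largest sieve prime power dividing $\grad f(n,\tilde{\bsn})$, reduce the net large-$D$ contribution to divisors in the window $Z<D\le q_nZ$, and only then do $YZ\le X$, Proposition \ref{idzero} (which bounds $q_n\ll_k \cont(g)Y$ and generates the $\cont(g)$ powers), and the Rankin trick yield $e^{-\log Z/\log Y}$; the acknowledgments indicate this is precisely where the univariate predecessor (Lemma 4.5 of \cite{ricemax}) had an oversight that this proof repairs. A secondary omission: differencing in a ``distinguished variable'' presupposes a nonzero $x_1^k$ coefficient, which need not exist; the paper first makes a unimodular change of variables using a small integer non-root of $g^k(1,x_2,\dots,x_{\l})$. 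Without the sign-cancellation step your proof of (iii) does not go through.
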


\section{Proof of Theorem \ref{more}} \label{itproof}
In this section, we exploit the estimates enumerated in Theorem \ref{standalonethm} and apply a Fourier analytic $L^2$ density increment, essentially an improved, streamlined version of S\'ark\"ozy's \cite{Sark1} original method, in order to prove Theorem \ref{more}. The core of this method has been utilized in \cite{Lucier}, \cite{LM}, \cite{Ruz}, and \cite{Rice}, among others. Most specifically, this section very closely follows Section 5 of \cite{ricemax}.

\subsection{Main iteration lemma and proof of Theorem \ref{more}}

For the remainder of Section \ref{itproof} we fix $k,\l\geq 2$, a strongly Deligne polynomial $h\in \Z[x_1,\dots,x_{\ell}]$ with $\deg(h)=k$,  and positive constants $C_0=C_0(h)$ and $c_0=c_0(h)$ that are appropriately large and small, respectively. For $N\in \N$ we let $$\cQ=\cQ(N,h)=e^{c_0\sqrt{\log N}}.$$   For a density $\delta\in (0,1]$, we define $\theta(k,\ell,\delta)$ by $\theta(k,\ell,\delta)=1$ if $\ell\geq 3$ and $\theta(k,2,\delta)=\log^{-k(k-2)}((c_0\delta)^{-1})$.

 We deduce Theorem \ref{more} from the following iteration lemma, which makes precise the aforementioned passage from a set lacking nonzero differences in the image of a polynomial to a new, denser subset of a slightly smaller interval lacking nonzero differences in the image of an appropriate auxiliary polynomial. 

\begin{lemma} \label{mainit} Suppose $A\subseteq [1,N]$ with $|A|=\delta N$. If $(A-A)\cap h_d(\Z^{\ell})\subseteq \{0\}$, $C_0,\delta^{-1}\leq \cQ$, and $d\leq N^{c_0}$, then there exists $q\ll_{h} \delta^{-2}$ and $A'\subseteq [1,N']$ such that 
$N'\gg_{h}  \delta^{4k}N$, 
\begin{equation*}|A'|\geq (1+c\theta(k,\ell,\delta))\delta N'   , \end{equation*} where $c=c(h)>0$, and $$(A'-A')\cap h_{qd}(\Z^{\ell})\subseteq \{0\}. $$

\end{lemma}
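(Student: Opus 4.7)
The plan is to execute the sieve-powered $L^2$ Fourier analytic density increment closely following Section 5 of \cite{ricemax}. Fix a sieve threshold $Y$ comparable to $\cQ$, an input scale $M$ so that $|h_d(\bsn)|<\delta N/2$ for all $\bsn\in[1,M]^{\ell}$ (taking $M^k\asymp\lambda(d)\delta N/d^k$), a major arc modulus cutoff $Q$ comparable to $\delta^{-2}$, and a width $\gamma$ with $2\gamma Q^2<1$. Set
\[
T(\alpha)=\sum_{\bsn\in [1,M]^{\ell}\cap W_d(Y)}e^{2\pi i h_d(\bsn)\alpha},
\]
so that $T(0)\gg_h M^{\ell}$ by Proposition \ref{brunprop} combined with (\ref{logk}) and Proposition \ref{content}. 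Parseval applied with the hypothesis $(A-A)\cap h_d(\Z^{\ell})\subseteq\{0\}$ yields
\[
\int_{\T}|\widehat{1_A}(\alpha)|^2 T(\alpha)\,d\alpha=|A|\cdot\#\{\bsn\in [1,M]^{\ell}\cap W_d(Y):h_d(\bsn)=0\}\ll_h |A|\,M^{\ell-1},
\]
the last bound being a dimension count on the zero variety of $h_d$.

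\textbf{Localizing Fourier mass.} Decomposing $1_A=\delta 1_{[1,N]}+f_A$ and estimating the $\delta^2$ contribution via the major arc estimate Theorem \ref{standalonethm}(\ref{majitem}) around $0/1$ produces $\delta^2 T(0)N(1+o(1))\gg_h\delta^2 M^{\ell}N$, which dominates the right-hand side once $c_0$ is small enough. A Cauchy--Schwarz then forces
\[
\int_{\T}|\widehat{f_A}(\alpha)|^2|T(\alpha)|\,d\alpha\gg_h\delta^2 M^{\ell}N.
\]
Split $\T=\mathfrak{M}(\gamma,Q)\cup\mathfrak{m}(\gamma,Q)$. The minor arc estimate Theorem \ref{standalonethm}(\ref{minitem}), combined with $\cont(h_d)\ll_h 1$ from Proposition \ref{content} and a balanced choice of auxiliary parameter $Z$, gives $\sup_{\mathfrak{m}}|T|\leq c\,\theta(k,\ell,\delta)\delta^2 M^{\ell}$ for an arbitrarily small constant $c$, so the minor arc contribution is absorbed using Parseval and $\|\widehat{f_A}\|_2^2\leq\delta N$. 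On each $\mathbf{M}_q(\gamma)$, Theorem \ref{standalonethm}(\ref{locitem}) bounds $|T|$ by $O_h(M^{\ell}q^{-\ell/2}\Phi(q,\ell))$; combining this with the disjointness (\ref{majdisj}) and dyadic pigeonholing over $q\leq Q$ extracts coprime integers $a,q$ with $q\ll_h\delta^{-2}$ and a localized $L^2$ concentration of $\widehat{f_A}$ near $a/q$, of order $\theta(k,\ell,\delta)\,\delta^2 N$ per unit arc length.

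\textbf{Physical increment and inheritance.} A standard passage (cf.\ Lemma 5.3 of \cite{ricemax}) converts this localized Fourier concentration into an arithmetic progression $P\subseteq[1,N]$ of common difference $q$ and length $\gg_h\delta^{O(1)}N/d$ on which $|A\cap P|\geq(1+c_1\theta(k,\ell,\delta))\delta|P|$. Restricting $P$ to a sub-progression with common difference $\lambda(q)\leq q^k\ll\delta^{-2k}$ and applying the affine change of variable $a\mapsto x+\lambda(q)a$ to land in $[1,N']$ produces $A'$; tracking constants (using $d\leq N^{c_0}$ to absorb the factor of $1/d$) yields $N'\gg_h\delta^{4k}N$ and $|A'|\geq(1+c\theta(k,\ell,\delta))\delta N'$. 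Proposition \ref{inh} then upgrades the hypothesis to $(A'-A')\cap h_{qd}(\Z^{\ell})\subseteq\{0\}$, completing the iteration.

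\textbf{Main obstacle.} The most delicate calibration is in the bivariate case $\ell=2$, where the loss factor $\Phi(q,2)=(q/\phi(q))^{C_2}$ in the local cancellation and the $(\log Y)^{ek}$ overhead in the minor arc bound must be balanced against the major arc measure to produce precisely the logarithmic increment $\theta(k,2,\delta)=\log^{-k(k-2)}(1/\delta)$ rather than a polynomial loss in $\delta$. This calibration is absent when $\ell\geq 3$ because $\Phi(q,\ell)=O_{k,\ell}(1)$ in that range, and it is precisely what forces the weaker exponent $\mu(k,2)=[(k-1)^2+1]^{-1}$ upon iteration of the lemma.
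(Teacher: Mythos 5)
Your skeleton is the paper's: a sieved sum $T(\alpha)$ over $[1,M]^{\ell}\cap W_d(Y)$, an $L^2$ mass inequality $\int|\widehat{f_A}(\alpha)|^2|T(\alpha)|\,d\alpha\gg_h\delta^2M^{\ell}N$, minor-arc bounds to push the mass to the major arcs, pointwise major-arc bounds to localize at one denominator $q\ll\delta^{-2}$, the standard concentration-to-progression lemma (Lemma \ref{dinc}), passage to a subprogression of step $\lambda(q)$, and Proposition \ref{inh} for inheritance; your Fourier-side derivation of the mass inequality (expanding $|\widehat{1_A}|^2$ and controlling the cross term by Cauchy--Schwarz) is a legitimate variant of the paper's physical-space computation with $f_A$ and the dichotomy on $|A\cap(N/9,8N/9)|$, and it does close using $\int|\widehat{1_{[1,N]}}|^2\le N$ and $\sup|T|\le M^{\ell}$. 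The genuine gap is the localization step when $\ell=2$: after inserting the major-arc bound you are left with $\sum_{q\le Q}b(q)q^{-1}\int_{\mathbf{M}_q(\gamma)}|\widehat{f_A}|^2\gg_h\delta^2N$, where $b(q)=(k-1)^{2\omega(q)}(q/\phi(q))^{C}$, and you must extract a \emph{single} $q\le Q$ with $\int_{\mathbf{M}'_q(\gamma)}|\widehat{f_A}|^2\gg\theta(k,2,\delta)\delta^2N$. ``Dyadic pigeonholing'' does not do this: a pigeonhole over $q\le Q$ or a Cauchy--Schwarz using $\sum_qb(q)^2q^{-2}=O_k(1)$ yields only a denominator carrying mass $\gg\delta^3N$ (a polynomial, not polylogarithmic, loss), and dividing out $b(q)$ for an individual $q$ is not an option since $b(q)$ can be super-polylogarithmic in $q$. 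A $\delta^3N$-type increment would, after iteration, give an exponent strictly worse than $\mu(k,2)=[(k-1)^2+1]^{-1}$. The paper's proof of Lemma \ref{L2I} instead uses the Ruzsa--Sanders averaging trick, Proposition \ref{rstrick}, which exploits the divisor structure of the enlarged arcs $\mathbf{M}'_q(\gamma)=\bigcup_{r\mid q}\mathbf{M}_r(\gamma)$ and the monotonicity $b(qr)\ge b(r)$, together with the Hall--Tenenbaum bound $\sum_{q\le Q}b(q)\ll_kQ\log^{(k-1)^2-1}Q$; that mechanism is exactly what produces $\theta(k,2,\delta)=\log^{-k(k-2)}((c_0\delta)^{-1})$, and it is missing from your outline.

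A second, smaller problem is your final bookkeeping: you claim a progression of length $\gg_h\delta^{O(1)}N/d$ and propose to ``absorb the factor of $1/d$ using $d\le N^{c_0}$.'' As stated this does not give $N'\gg_h\delta^{4k}N$ (take $\delta\asymp1$ and $d\asymp N^{c_0}$), and a per-step loss of $1/d_m$ with $d_m\le(c\delta)^{-2m}$ would compound over the iteration and degrade the final exponent. In the paper no $1/d$ appears at this stage: Lemma \ref{dinc} gives $qL\gg\min\{\theta N,\gamma^{-1}\}$ with $\gamma^{-1}=\eta^{2k}N$ independent of $d$ (the input scale $M$, which does shrink with $d$, enters only the exponential sum estimates, not the arc width), so $qL\gg_h\delta^{2k}N$, and only then does one divide by $\lambda(q)\le q^{k}\ll_h\delta^{-2k}$ to reach $N'\gg_h\delta^{4k}N$. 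So the $1/d$ in your account is an artifact of your normalization of $M$ and must not be allowed to enter the progression length; once the localization step is done via Proposition \ref{rstrick} and the length is tracked through $\gamma^{-1}$ as above, the rest of your outline (pigeonholing $P$ into $\lambda(q)$-subprogressions and invoking Proposition \ref{inh}) matches the paper and is correct.
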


\begin{proof}[Proof of Theorem~\ref{more}]
Throughout this proof, we let $C$ and $c$ denote sufficiently large or small positive constants, respectively, which we allow to change from line to line, but which depend only on  $h$.  

\noindent Suppose $A \subseteq [1,N]$ with $|A|=\delta N$ and $$(A-A)\cap h(\Z^{\ell})\subseteq \{0\}.$$ Setting $A_0=A$, $N_0=N$, $d_0=1$, and $\delta_0=\delta$, Lemma \ref{mainit} yields, for each $m$, a set $A_m \subseteq [1,N_m]$ with $|A_m|=\delta_mN_m$ and $(A_m-A_m)\cap h_{d_m}(\Z^{\ell})\subseteq \{0\}.$ Further, we have that
\begin{equation} \label{NmI} N_m \geq c\delta^{4k}N_{m-1} \geq (c\delta)^{4km} N,
\end{equation}
\begin{equation} \label{incsizeI} \delta_m \geq (1+c\theta(k,\ell,\delta))\delta_{m-1}, 
\end{equation}
and
\begin{equation}\label{dmI} d_m \leq (c\delta)^{-2} d_{m-1} \leq (c\delta)^{-2 m}, 
\end{equation}
as long as 
\begin{equation} \label{delmI} C,  \delta_m^{-1} \leq e^{c\sqrt{\log N_m}}, \quad d\leq N_m^{c} .
\end{equation}
By (\ref{incsizeI}), the density $\delta_m$ will exceed $1$, and hence (\ref{delmI}) must fail, for $m=M=M(h,\delta)$, where $$M(h,\delta)=\begin{cases} C\log (C\delta^{-1})& \text{if }\ell \geq 3 \\ C\log^{(k-1)^2}(C\delta^{-1}) & \text{if }\ell=2 \end{cases}. $$ However, by (\ref{NmI}), (\ref{incsizeI}), and (\ref{dmI}), (\ref{delmI}) holds for $m=M$ if \begin{equation}\label{endgame} (c\delta)^{4kM}=e^{C\log^{\mu(k,\l)^{-1}}(C\delta^{-1})}\leq N^{c}.\end{equation} Therefore, (\ref{endgame}) must fail, or in other words $\delta \ll_{h} e^{-c(\log N)^{\mu(k,\l)}},$ as claimed. 
\end{proof}

\subsection{$L^2$ Fourier concentration and proof of Lemma \ref{mainit}} The philosophy behind the proof of Lemma \ref{mainit} is that the condition $(A-A)\cap h_d(\Z^{\l})\subseteq \{0\}$ represents highly nonrandom behavior, which should be detectable in the Fourier analytic behavior of $A$. Specifically, we locate one small denominator $q$ such that $\widehat{f_A}$ has $L^2$ concentration around rationals with denominator $q$, then invoke a standard lemma stating that $L^2$ concentration of  $\widehat{f_A}$ implies the existence a long arithmetic progression on which $A$ has increased density. 

\begin{lemma}  \label{L2I} Suppose $A\subseteq [1,N]$ with $|A|=\delta N$, $\eta=c_0\delta$, and  $\gamma=\eta^{-2k}/N$. If $(A-A)\cap h_d(\Z^{\l})\subseteq \{0\}$, $C_0,\delta^{-1}\leq \cQ$, $d\leq N^{c_0}$, and  $|A\cap(N/9,8N/9)|\geq 3\delta N/4$, then there exists $q\leq \eta^{-2}$ such that 
\begin{equation*} \int_{\mathbf{M}'_q(\gamma)} |\widehat{f_A}(\alpha)|^2d\alpha \gg_{h} \theta(k,\ell,\delta) \delta^{2} N.  
\end{equation*}  
\end{lemma}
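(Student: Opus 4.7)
The plan is to execute an $L^2$ Fourier-analytic density increment of S\'ark\"ozy type, now driven by the multivariate exponential sum toolkit packaged as Theorem~\ref{standalonethm}; I would closely follow the template of Section~5 of \cite{ricemax}. Choose $X$ as a small power of $N$ (small enough that $|h_d(\bsn)|\leq N/9$ for every $\bsn\in[1,X]^{\ell}$, which is feasible under the hypothesis $d\leq N^{c_0}$ once $c_0$ is sufficiently small) and $Y$ a slowly growing sieve parameter, and put $W=[1,X]^{\ell}\cap W_d(Y)$, so that $|W|\gg_h X^{\ell}$ by Proposition~\ref{brunprop} combined with~(\ref{logk}). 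Introduce the counting functional
$$T=\sum_{\bsn\in W}\sum_{x\in\Z} 1_A(x)\,1_A(x+h_d(\bsn)).$$
The hypothesis $(A-A)\cap h_d(\Z^{\ell})\subseteq\{0\}$ forces $T=|A|\cdot|\{\bsn\in W:h_d(\bsn)=0\}|\ll_h \delta NX^{\ell-1}$ by an elementary box bound for the zero locus of a nonzero polynomial, which is negligible compared to the ``random'' count $\asymp_h \delta^2 N X^{\ell}$ once $X$ is chosen large enough in terms of $\delta$ (feasible since $\delta^{-1}\leq\cQ=e^{c_0\sqrt{\log N}}$).

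Next, by Fourier inversion, $T=\int_0^1 |\widehat{1_A}(\alpha)|^2 S(\alpha)\,d\alpha$ where $S(\alpha)=\sum_{\bsn\in W}e^{2\pi i h_d(\bsn)\alpha}$. Writing $1_A=\delta 1_{[1,N]}+f_A$ and expanding, the concentration hypothesis $|A\cap(N/9,8N/9)|\geq 3\delta N/4$ controls the $\delta\cdot f_A$ cross-term: for $\bsn\in W$ with $|h_d(\bsn)|\leq N/9$, telescoping $\sum_{x\in\Z}1_{[1,N]}(x)f_A(x+h_d(\bsn))$ produces a short sum of $f_A$-values near the endpoints of $[1,N]$, which is small by the concentration hypothesis. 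Combined with the main-term lower bound $\delta^2\int|\widehat{1_{[1,N]}}(\alpha)|^2 S(\alpha)\,d\alpha\geq (8/9)\delta^2 N|W|$, this yields
$$\left|\int_0^1 |\widehat{f_A}(\alpha)|^2 S(\alpha)\,d\alpha\right|\gg_h \delta^2 N|W|.$$
Splitting this integral at $\mathfrak{M}(\gamma,Q)\cup\mathfrak{m}(\gamma,Q)$ with $Q=\eta^{-2}$ and $\gamma=\eta^{-2k}/N$, the major arcs are disjoint by~(\ref{majdisj}). Tuning $Z$ in Theorem~\ref{standalonethm}(\ref{minitem}) so that its bound on $|S(\alpha)|$ over $\mathfrak{m}$ is $\ll_h \delta|W|/(\log(1/\delta))^C$, the minor-arc portion of the integral is absorbed via Parseval ($\|\widehat{f_A}\|_2^2=\delta N$).

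The remaining major-arc mass is analyzed via Theorem~\ref{standalonethm}(\ref{majitem})--(\ref{locitem}), which on each disjoint $\mathbf{M}_q(\gamma)$ ($q\leq Q$) yields the pointwise bound $|S(\alpha)|\ll_h q^{-\ell/2}(k-1)^{\ell\omega(q)}\Phi(q,\ell)\,X^{\ell}$. Dividing by $X^{\ell}$,
$$\delta^2 N \ll_h \sum_{q\leq Q} q^{-\ell/2}(k-1)^{\ell\omega(q)}\Phi(q,\ell)\int_{\mathbf{M}_q(\gamma)}|\widehat{f_A}(\alpha)|^2\,d\alpha.$$
A weighted pigeonhole, together with the multiplicative-function sum estimate
$$\sum_{q\leq Q}q^{-\ell/2}(k-1)^{\ell\omega(q)}\Phi(q,\ell)\ll_h \theta(k,\ell,\delta)^{-1},$$
then extracts a single $q\leq Q$ with $\int_{\mathbf{M}_q(\gamma)}|\widehat{f_A}|^2\gg_h \theta(k,\ell,\delta)\delta^2 N$, and the conclusion follows since $\mathbf{M}_q(\gamma)\subseteq\mathbf{M}'_q(\gamma)$. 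The main obstacle will be achieving the sharp log-exponent $k(k-2)$ in the $\ell=2$ case: the naive pointwise bound on $|S|$ over $\mathbf{M}_q(\gamma)$ produces a sum of the order $\log^{(k-1)^2}(1/\delta)$, a single logarithm too weak. Closing this gap requires refining the major-arc analysis to exploit the $\beta$-integrated decay of the oscillatory integral $\int_{[0,X]^{\ell}}e^{2\pi i h_d(\bsx)\beta}\,d\bsx$, effectively averaging $|S|$ over each $\mathbf{M}_q(\gamma)$ rather than sup-bounding it, and recouping the missing logarithm.
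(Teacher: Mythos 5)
Up through the weighted major-arc inequality your argument coincides with the paper's proof: the same sieved box $[1,M]^{\ell}\cap W_d(Y)$ with $h_d$ of size at most $N/9$, the same use of the concentration hypothesis (your cross-term computation is just a reorganization of the paper's inequality culminating in (\ref{neg})), the same parameters $Q=\eta^{-2}$, $\gamma=\eta^{-2k}/N$, and the same inputs from Theorem \ref{standalonethm}, leading to $\sum_{q\le Q}(k-1)^{\ell\omega(q)}\Phi(q,\ell)q^{-\ell/2}\int_{\mathbf{M}_q(\gamma)}|\widehat{f_A}(\alpha)|^2d\alpha\gg_h\delta^2N$; for $\ell\ge 3$ the convergence of the weight sum finishes the proof exactly as in the paper.

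The genuine gap is the $\ell=2$ endgame, which you correctly flag but propose to close by the wrong mechanism. The paper recovers the missing logarithm not by refining the bound on $|S|$ over each arc, but by a purely arithmetic pigeonhole: Proposition \ref{rstrick} (the Ruzsa--Sanders trick of \cite{Ruz}, Proposition 5.6 of \cite{ricemax}), which exploits that $\mathbf{M}_r(\gamma)\subseteq\mathbf{M}'_{q}(\gamma)$ for every multiple $q\le Q$ of $r$, so each quantity $\int_{\mathbf{M}_r(\gamma)}|\widehat{f_A}|^2$ is counted roughly $Q/r$ times among the primed arcs; combined with the Hall--Tenenbaum estimate $\sum_{q\le Q}b(q)\ll_k Q\log^{(k-1)^2-1}Q$ for $b(q)=(k-1)^{2\omega(q)}(q/\phi(q))^{C}$, whose average order is one logarithm smaller than $\sum_{q\le Q}b(q)/q\ll_k\log^{(k-1)^2}Q$, this produces the exponent $k(k-2)=(k-1)^2-1$ and is precisely why the conclusion is stated for $\mathbf{M}'_q(\gamma)$ rather than $\mathbf{M}_q(\gamma)$. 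Your proposed fix---averaging $|S(\alpha)|$ in $\beta$ over each arc via the oscillatory-integral decay---does not work: to pass from $\int_{\mathbf{M}_q(\gamma)}|\widehat{f_A}|^2|S|$ to $\int_{\mathbf{M}_q(\gamma)}|\widehat{f_A}|^2$ you still need a pointwise comparison, and the mass of $|\widehat{f_A}|^2$ may well sit at $\beta\approx 0$, exactly where $|S|$ attains its sup bound $\asymp b(q)q^{-\ell/2}T$, so an $L^1$-in-$\beta$ gain for $|S|$ cannot be transferred. In the paper, the decay of the oscillatory integral (Corollary \ref{vdccor}) is used only to establish the minor-arc bound (\ref{SminII}) in the regimes $q\le Q$ with $|\beta|\ge\gamma$ and $Q\le q\le Z^{3k}$---a regime your minor-arc treatment should also address explicitly, since Theorem \ref{standalonethm}(\ref{minitem}) alone does not cover it---and not to sharpen the major-arc count.
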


\noindent Lemma \ref{mainit} follows from Lemma \ref{L2I} and the following standard $L^2$ density increment lemma.

\begin{lemma}[Lemma 2.3 in \cite{thesis}, see also \cite{Lucier}, \cite{Ruz}] \label{dinc} Suppose $A \subseteq [1,N]$ with $|A|=\delta N$. If  $0< \theta \leq 1$, $q \in \N$, $\gamma>0$, and
\begin{equation*} \int_{\mathbf{M}'_q(\gamma)}|\widehat{f_A}(\alpha)|^2d\alpha \geq \theta\delta^2 N,
\end{equation*} 
then there exists an arithmetic progression 
\begin{equation*}P=\{x+\ell q : 1\leq \ell \leq L\}
\end{equation*}
with $qL \gg \min\{\theta N, \gamma^{-1}\}  $ and $|A\cap P| \geq (1+\theta/32)\delta L$.
\end{lemma}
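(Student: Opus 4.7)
The plan is to execute a standard Plancherel-based $L^2$ density increment: convolve $f_A$ against the probability measure of a suitable arithmetic progression, then extract a translate on which the density of $A$ exceeds $\delta$ by the claimed amount via a mean-zero pigeonhole argument.

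Choose $L:=\lfloor c\min\{\theta N,\gamma^{-1}\}/q\rfloor$ for an appropriately small absolute constant $c>0$ and let $P_0:=\{q,2q,\dots,Lq\}$. This choice ensures simultaneously that $qL\gg\min\{\theta N,\gamma^{-1}\}$, that $qL\gamma\leq 1/8$, and that $Lq\leq N/2$ so there is room for translates of $P_0$ inside $[1,N]$. Elementary estimates on the partial geometric sum $\widehat{1_{P_0}}(\alpha)=\sum_{\ell=1}^{L}e^{-2\pi i\ell q\alpha}$ then give $|\widehat{1_{P_0}}(\alpha)|^2\gg L^2$ uniformly on $\mathbf{M}'_q(\gamma)$, since there $q\alpha$ lies within $q\gamma\leq 1/(8L)$ of an integer and the summands essentially align. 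Set $\chi:=1_{P_0}/L$ (a probability measure) and $G:=f_A*\check{\chi}$; by Plancherel's identity,
$$\|G\|_2^2=\int_\T|\widehat{f_A}(\alpha)|^2|\widehat{\chi}(\alpha)|^2\,d\alpha\gg\int_{\mathbf{M}'_q(\gamma)}|\widehat{f_A}|^2\,d\alpha\geq\theta\delta^2 N.$$

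The key step is extracting a large pointwise positive value from this $L^2$ lower bound. For any $x$ with $x+P_0\subseteq[1,N]$ one has $G(x)=|A\cap(x+P_0)|/L-\delta\in[-\delta,1-\delta]$, and $\sum_x G(x)=\widehat{G}(0)=\widehat{f_A}(0)\widehat{\chi}(0)=0$, so $\sum_x G^{+}=\sum_x G^{-}$. Writing $M:=\max_x G(x)$ and using the asymmetric $L^{\infty}$ bounds $G^{+}\leq M$ and $G^{-}\leq\delta$, together with $\sum_x G^{+}\leq M\,|\{x:G(x)>0\}|\ll MN$, one obtains
$$\|G\|_2^2=\sum_x(G^+)^2+\sum_x(G^-)^2\leq M\sum_x G^{+}+\delta\sum_x G^{-}\ll (M+\delta)MN.$$
Comparing with the previous display yields $M(M+\delta)\gg\theta\delta^2$, and solving this quadratic gives $M\gg\theta\delta$; careful bookkeeping of the constants throughout the setup delivers $M\geq\theta\delta/32$.

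Taking any $x_0$ achieving this maximum, the arithmetic progression $P:=x_0+P_0$ has common difference $q$, length $L$, satisfies $qL\gg\min\{\theta N,\gamma^{-1}\}$, and has $|A\cap P|\geq(1+\theta/32)\delta L$ as required. The main obstacle is the pointwise extraction in the previous paragraph: the nontrivial observation is that $G^{-}\leq\delta$ (rather than merely $\leq 1$), which via the quadratic in $M$ yields an increment \emph{linear} in $\theta$. Without exploiting this asymmetry one would only extract an increment of order $\sqrt{\theta}\,\delta$, which would be insufficient to produce the exponents $\mu(k,\ell)$ appearing in Theorem~\ref{more}; this is exactly the standard $L^2$ density-increment manipulation appearing in the cited references.
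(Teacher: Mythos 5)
The paper does not prove this lemma itself (it is quoted from Lemma 2.3 of the cited thesis), so I am judging your argument on its own terms; your route is indeed the standard one used in those references (convolve $f_A$ with the normalized indicator of $P_0=\{q,2q,\dots,Lq\}$, lower-bound $|\widehat{1_{P_0}}|$ on $\mathbf{M}'_q(\gamma)$, apply Plancherel, then a mean-zero pigeonhole exploiting the one-sided bound $G\geq-\delta$), and the steps up through $M(M+\delta)\gg\theta\delta^2$, hence $M\gg\theta\delta$, are correct.

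However, there is a genuine gap at the very last step, namely an unaddressed edge effect. Your identity $G(x)=|A\cap(x+P_0)|/L-\delta$ holds only when $x+P_0\subseteq[1,N]$; for the roughly $2Lq$ values of $x$ where the window sticks out of $[1,N]$ one instead has $G(x)=\bigl(|A\cap(x+P_0)|-\delta\,m(x)\bigr)/L$ with $m(x)=|(x+P_0)\cap[1,N]|<L$, and then $G(x)\geq\theta\delta/32$ only gives $|A\cap(x+P_0)|\geq\delta m(x)+\theta\delta L/32$, which falls short of $(1+\theta/32)\delta L$ by $\delta(L-m(x))$. Your $M:=\max_x G(x)$ is a global maximum, and nothing in the argument prevents it (indeed, essentially all of the $L^2$ mass) from living on such boundary points; e.g.\ when $\delta$ is small and $A$ clusters near an endpoint, boundary windows can have $G$ comparable to $1-\delta$ while every interior window is unremarkable, so the desired conclusion does not follow from the displayed chain. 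Nor can you simply redefine $M$ as the maximum over interior $x$, since then the inequality $\sum_x(G^+)^2\leq M\sum_x G^+$ is no longer valid. A correct treatment needs an extra idea: either run the mean-zero pigeonhole over a partition of $[1,N]$ into arithmetic progressions of common difference $q$ and length between $L$ and $2L$ (so every cell lies inside $[1,N]$ and the identity ``cell sum $=|A\cap P_j|-\delta|P_j|$'' is exact), which is how the cited proofs are organized, or explicitly bound the boundary contribution, e.g.\ by noting that each boundary window's intersection with $A$ is contained in that of a full interior window, so under the assumption that all interior values are below $\theta\delta/32$ the boundary positive part is at most $\theta\delta/32+\delta(1-m(x)/L)$, and then checking that the resulting totals cannot reach $\theta\delta^2N$ once $Lq\leq c\,\theta N$ with $c$ small. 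Either patch is routine but it is precisely the bookkeeping your write-up waves away, so as written the proof is incomplete at the step where the progression is extracted.
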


\begin{proof}[Proof of Lemma~\ref{mainit}]
Suppose $A\subseteq [1,N]$, $|A|=\delta N$, $(A-A)\cap h_d(\Z^{\ell})\subseteq \{0\}$, $C_0, \delta^{-1}\leq \cQ$, and $d\leq N^{c_0}$. If $|A\cap (N/9,8N/9)| < 3\delta N/4$, then $\max \{ |A\cap[1,N/9]|, |A\cap [8N/9,N]| \} > \delta N/8$. In other words, $A$ has density at least $9\delta/8$ on one of these intervals. 

\noindent Otherwise, Lemmas \ref{L2I} and \ref{dinc} apply, so in either case, letting $\eta=c_0\delta$, there exists $q\leq \eta^{-2}$ and an arithmetic progression 
\begin{equation*}P=\{x+\ell q : 1\leq \ell \leq L\}
\end{equation*}
with $qL\gg_{h} \delta^{2k} N$ and $$|A\cap P| \geq (1+c\theta(k,\ell,\delta))\delta L  .$$ Partitioning $P$ into subprogressions of step size $\lambda(q)$, the pigeonhole principle yields a progression 
\begin{equation*} P'=\{y+a \lambda(q) : 1\leq a \leq N'\} \subseteq P
\end{equation*}
with $N'\geq qL/2\lambda(q)$ and $|A\cap P'|/N' \geq |A\cap P|/L$. This allows us to define a set $A' \subseteq [1,N']$ by \begin{equation*} A' = \{a \in [1,N'] : y+a \lambda(q) \in A \},
\end{equation*} which satisfies $|A'|=|A\cap P'|$ and $N'\gg_{h} \delta^{2k}N/\lambda(q) \gg_{h} \delta^{4k}N$. Moreover, $(A-A)\cap h_d(\Z^{\ell}) \subseteq \{0\}$ implies $(A'-A')\cap h_{qd}(\Z^{\ell})\subseteq \{0\}$ by Proposition \ref{inh}. 
\end{proof}

Our task for this section is now completely reduced to a proof of Lemma \ref{L2I}.

\subsection{Preliminary notation for proof of Lemma \ref{L2I}} Before delving into the proof of Lemma \ref{L2I}, we take the opportunity to define some relevant sets and quantities, depending on our strongly Deligne polynomial $h\in \Z[x_1,\dots,x_{\ell}]$, scaling parameter $d$, a parameter $Y>0$, and the size $N$ of the ambient interval. In all the notation defined below, we suppress all of the aforementioned dependence, as the relevant objects will be fixed in context.  

We define $W_d$, $\gamma_d$, and $j_d$ in terms of $h$ as in Section \ref{sievesec}. We then let $M=\left(\frac{N}{9J}\right)^{1/k}$,  where $J$ is the sum of the absolute value of all the coefficients of $h_d$, and hence $h_d([1,M]^{\ell})\subseteq [-N/9,N/9]$. We let $$w=\prod_{p\leq Y}\left(1-\frac{j_{d}(p)}{p^{\gamma_{d}(p)\ell}}\right), $$ and we let $T=wM^{\ell}$. 

We let $Z=\{\bsn\in \Z^{\ell}: h_d(\bsn)=0\}$, and we let $H=\left([1,M]^{\ell}\cap W_{d}(Y)\right)\setminus Z$. We note that the hypothesis $\cQ\geq C_0$ allows us to assume at any point that $\cQ$, and hence also $N$, are sufficiently large with respect to $h$, which we take as a perpetual assumption moving forward. Under this assumption, it follows from (\ref{sieve}), (\ref{logk}), and the estimate \begin{equation} \label{Zest} |Z\cap[1,M]^{\ell}|\ll_h M^{\ell-1}\end{equation} that \begin{equation}\label{Hsize} \left| H \right|\geq T/2. \end{equation}

\subsection{Proof of Lemma \ref{L2I}} \label{massproof} Suppose $A\subseteq [1,N]$ with $|A|=\delta N$, $(A-A)\cap h_d(\Z^{\ell}) \subseteq \{0\}$, $C_0,\delta^{-1}\leq \cQ$, and $d\leq N^{c_0}$.  Further, let $\eta=c_0\delta$, let $Q=\eta^{-2}$, and let $Y=\eta^{-2k}$.  Since $h_d(H) \subseteq [-N/9,N/9]\setminus \{0\}$, we have
\begin{align*} \sum_{\substack{x \in \Z \\ \bsn\in H}} f_A(x)f_A(x+h_d(\bsn))&=\sum_{\substack{x \in \Z \\ \bsn \in H}} 1_A(x)1_A(x+h_d(\bsn)) -\delta\sum_{\substack{x \in \Z \\ \bsn\in H}} 1_A(x)1_{[1,N]}(x+h_d(\bsn)) \\ &\qquad -\delta \sum_{\substack{x \in \Z \\ \bsn\in H}} 1_{A}(x+h_d(\bsn))1_{[1,N]}(x)+\delta^2\sum_{\substack{x \in \Z \\ \bsn\in H}} 1_{[1,N]}(x)1_{[1,N]}(x+h_d(\bsn))  \\&\leq \Big(\delta^2N -2\delta|A\cap (N/9,8N/9)|\Big)|H|. 
\end{align*}
Therefore, if $|A \cap (N/9, 8N/9)| \geq 3\delta N/4$, then by (\ref{Hsize}) we have
\begin{equation}\label{neg} \sum_{\substack{x \in \Z \\ \bsn\in H}} f_A(x)f_A(x+h_d(\bsn)) \leq -\delta^2NT/4.
\end{equation} 
We see from (\ref{Zest}) and orthogonality of characters that 
\begin{equation}\label{orth} 
\sum_{\substack{x \in \Z \\ \bsn\in H}} f_A(x)f_A(x+h_d(\bsn))=\int_0^1 |\widehat{f_A}(\alpha)|^2 S(\alpha)d\alpha +O_h(NM^{\ell-1}),
\end{equation} 
where 
\begin{equation*}S(\alpha)= \sum_{\bsn \in [1,M]^{\ell} \cap W_{d}(Y)}e^{2\pi i h_d(\bsn)\alpha}.
\end{equation*} 
Combining (\ref{neg}) and (\ref{orth}), we have  
\begin{equation} \label{mass}
\int_0^1 |\widehat{f_A}(\alpha)|^2|S(\alpha)|d\alpha \geq \delta^2NT/8.
\end{equation} Letting $\gamma=\eta^{-2k}/N$, Theorem \ref{standalonethm} yields that for $\alpha \in \mathbf{M}_q(\gamma), \ q\leq Q $, we have \begin{equation} \label{SmajII} |S(\alpha)| \ll_{h} (k-1)^{\l\omega(q)}\Phi(q,\l)q^{-\ell/2}T,
\end{equation} where $\Phi(q,2)=(q/\phi(q))^C$ for $C=C(k)$ and $\Phi(q,\l)\ll_{k,\l} 1$ for $\l\geq 3$. Further, for $\alpha \in \mathfrak{m}(\gamma,Q)$ we have 
\begin{equation} \label{SminII} |S(\alpha)| \leq  \delta T/16.
\end{equation} The proof of the estimates in Theorem \ref{standalonethm} and the subsequent deduction of (\ref{SmajII}) and (\ref{SminII}) can be found in Section \ref{expest}. From (\ref{SminII}) and Plancherel's Identity, we have \begin{equation*}  \int_{\mathfrak{m}(\gamma,Q)} |\widehat{f_A}(\alpha)|^2|S(\alpha)|d\alpha \leq \delta^2NT/16, \end{equation*} which together with (\ref{mass}) yields \begin{equation}\label{majmass}  \int_{\mathfrak{M}(\gamma,Q)}|\widehat{f_A}(\alpha)|^2|S(\alpha)|d\alpha \geq \delta^2 NT/16. \end{equation} 
From (\ref{SmajII}) and (\ref{majmass}) , we have 
\begin{equation} \label{majmassII} \sum_{q=1}^Q  (k-1)^{\l\omega(q)}\Phi(q)q^{-\ell/2} \int_{\mathbf{M}_q(\gamma)}|\widehat{f_A}(\alpha)|^2 {d}\alpha \gg_{h} \delta^2N.
\end{equation}
For $\ell=2$, the function $b(q)=(k-1)^{2\omega(q)}(q/\phi(q))^C$ satisfies $b(qr)\geq b(r)$, and we make use of the following proposition, which is based on a trick that originated in \cite{Ruz}.
\begin{proposition}[Proposition 5.6, \cite{ricemax}] \label{rstrick} For any $\gamma,Q>0$ satisfying $2\gamma Q^2<1$, and for any function $b: \N \to [0,\infty)$ satisfying $b(qr)\geq b(r)$ for all $q,r\in \N$, we have $$\max_{q\leq Q} \int_{\mathbf{M}'_q(\gamma)}|\widehat{f_A}(\alpha)|^2 {d}\alpha \geq Q \Big(2\sum_{q=1}^Q b(q)\Big)^{-1} \sum_{r=1}^Q \frac{b(r)}{r}\int_{\mathbf{M}_r(\gamma)}|\widehat{f_A}(\alpha)|^2 {d}\alpha. $$
\end{proposition}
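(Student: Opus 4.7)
The plan is to establish this via a weighted averaging trick, exploiting the nested structure of $\mathbf{M}'_q(\gamma)$ and the monotonicity-like hypothesis $b(qr)\geq b(r)$.

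First I would unpack the definitions. By construction $\mathbf{M}'_q(\gamma)=\bigcup_{r\mid q}\mathbf{M}_r(\gamma)$, and the hypothesis $2\gamma Q^2<1$ activates the disjointness statement (\ref{majdisj}), so the arcs $\mathbf{M}_{a/r}(\gamma)$ for distinct reduced fractions with denominator at most $Q$ are pairwise disjoint. Setting $I_r:=\int_{\mathbf{M}_r(\gamma)}|\widehat{f_A}(\alpha)|^2 d\alpha$, this yields the clean decomposition
$$\int_{\mathbf{M}'_q(\gamma)}|\widehat{f_A}(\alpha)|^2 d\alpha =\sum_{r\mid q}I_r \qquad (1\leq q\leq Q).$$

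Next, since the maximum dominates any weighted average with nonnegative weights, choosing weights $b(q)$ and switching the order of summation gives
$$\max_{q\leq Q}\int_{\mathbf{M}'_q(\gamma)}|\widehat{f_A}|^2 d\alpha \;\geq\; \frac{1}{\sum_{q\leq Q}b(q)}\sum_{q\leq Q}b(q)\sum_{r\mid q}I_r \;=\; \frac{1}{\sum_{q\leq Q}b(q)}\sum_{r\leq Q}I_r\sum_{\substack{q\leq Q\\ r\mid q}}b(q).$$

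The final step is to lower-bound the inner sum $\sum_{q\leq Q,\;r\mid q}b(q)$. Writing $q=kr$ and invoking the hypothesis $b(kr)\geq b(r)$ gives
$$\sum_{\substack{q\leq Q\\ r\mid q}}b(q)=\sum_{k\leq Q/r}b(kr)\;\geq\;\lfloor Q/r\rfloor\, b(r)\;\geq\;\frac{Qb(r)}{2r},$$
where the last elementary inequality $\lfloor Q/r\rfloor\geq Q/(2r)$ is routinely verified for $1\leq r\leq Q$ by splitting into the cases $Q/r\geq 2$ and $1\leq Q/r<2$. Inserting this bound yields exactly the claimed inequality. There is no real obstacle here; the only subtlety worth flagging in the write-up is the careful invocation of (\ref{majdisj}) to justify the disjointness, since otherwise the decomposition of $\int_{\mathbf{M}'_q}$ as a sum over $r\mid q$ would not be an equality.
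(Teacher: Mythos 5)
Your argument is correct: the disjointness from (\ref{majdisj}) justifies the exact decomposition $\int_{\mathbf{M}'_q(\gamma)}|\widehat{f_A}|^2\,d\alpha=\sum_{r\mid q}I_r$ for $q\leq Q$, the max-dominates-weighted-average step, the swap of summation, and the bound $\sum_{k\leq Q/r}b(kr)\geq \lfloor Q/r\rfloor b(r)\geq Qb(r)/2r$ are all sound, and they assemble into exactly the stated inequality. Note that the paper does not prove Proposition \ref{rstrick} at all --- it is quoted from Proposition 5.6 of \cite{ricemax} --- and your write-up is essentially the standard argument given there (the trick originating with Ruzsa--Sanders), so there is no substantive divergence to report.
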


\noindent Because $b$ is a multiplicative function, $b(p^v)=(k-1)^2(1+1/(p-1))^C\ll_k 1$ for all prime powers $p^v$, and $$\sum_{q=1}^Q \frac{b(q)}{q}\leq \prod_{p\leq Q} \left(1+\frac{b(p)}{p}+\frac{b(p)}{p^2}+\cdots\right)=\prod_{p\leq Q} \left(1+\frac{(k-1)^2}{p}+O_k(1/p^2)\right)\ll_k \log^{(k-1)^2}Q, $$ it follows from Theorem 01 of \cite{HallTen} that 
\begin{equation*} \sum_{q=1}^Q b(q) \ll_k Q\log^{(k-1)^2-1} Q, 
\end{equation*}
and the lemma for $\ell=2$ follows from (\ref{majmassII}) and Proposition \ref{rstrick}. For $\l \geq 3$, since $(k-1)^{\l\omega(q)}\ll_{k,\l,\epsilon} q^{\epsilon}$ for any $\epsilon>0$, the sum $\sum_{q=1}^{\infty}  (k-1)^{\l\omega(q)}q^{-\ell/2}$ is convergent, and hence (\ref{majmassII}) immediately yields $$\max_{q\leq Q} \int_{\mathbf{M}_q(\gamma)}|\widehat{f_A}(\alpha)|^2 {d}\alpha \gg_h \delta^2 N.$$ Since $\mathbf{M}_q(\gamma)\subseteq \mathbf{M}'_q(\gamma), $ this establishes the lemma for $\ell \geq 3$. \qed

\section{Criteria for strongly Deligne polynomials} \label{AG1}
In this section, we prove Proposition~\ref{thm:main} and a stronger version of Proposition~\ref{prop:main}. We begin, though, by collecting a few facts from algebraic geometry that will be useful in subsequent sections. Throughout this section, for a variety $V$, we let $V^\sing$ denote the singular locus of $V$, and we let $V^\ns=V\setminus V^\sing$.

\subsection{Results from algebraic geometry}

We first state a classical version of B\'ezout's Theorem; see \cite[Example 8.4.6]{Fulton}.

\begin{lemma}[B\'ezout's Theorem]\label{lem:bezout}
Let $V_1,\ldots,V_k$ be subvarieties of $\bbP^\ell$. Then $\deg\bigcap_{i=1}^k V_i \le \prod_{i=1}^k \deg V_i.$
In particular, if the intersection is finite, then $\left| \bigcap_{i=1}^k V_i \right| \le \prod_{i=1}^k \deg V_i.$
\end{lemma}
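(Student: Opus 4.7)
The plan is to work in the Chow ring of projective space, $A^*(\bbP^\ell) \cong \Z[H]/(H^{\ell+1})$, where $H$ denotes the class of a hyperplane. The foundational fact I would use is that any closed subvariety $V \subseteq \bbP^\ell$ of pure codimension $r$ has class $[V] = (\deg V) \cdot H^r$, which follows directly from the definition of degree as the number of points of intersection with a generic linear subspace of complementary dimension. Applying the push-forward map $A^0(\bbP^\ell) \to \Z$ then lets one read off degrees from classes, so the whole problem reduces to comparing classes in the Chow ring.

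First I would handle the \emph{proper} intersection case, where $\codim \bigcap_i V_i = \sum_i \codim V_i$. Here induction on $k$ suffices: at each stage, intersecting $W = \bigcap_{j<k} V_j$ with $V_k$ properly on the smooth ambient $\bbP^\ell$ gives $[W \cap V_k] = [W] \cdot [V_k]$ in the Chow ring. Multiplying classes across the induction yields the equality $\deg \bigcap_i V_i = \prod_i \deg V_i$, which is exactly the desired inequality in this case. The smoothness of $\bbP^\ell$ is what lets one multiply Chow classes without any deformation or moving step.

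The hard part is the \emph{improper} case, where components of $\bigcap_i V_i$ may have larger than expected dimension, so the naive product class need no longer equal the class of the set-theoretic intersection. My plan is to invoke Fulton's refined intersection theory, which constructs an intersection cycle $\sum_j m_j Z_j$ supported on $\bigcap_i V_i$ with strictly positive integer multiplicities $m_j \ge 1$ on the irreducible components $Z_j$, and satisfying $\sum_j m_j \deg Z_j \le \prod_i \deg V_i$ (the ``refined B\'ezout'' inequality, where the gap accounts for excess intersection along positive-dimensional loci). Since $\deg \bigcap_i V_i = \sum_j \deg Z_j \le \sum_j m_j \deg Z_j$, the claimed inequality follows. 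The ``in particular'' clause is then immediate, because a zero-dimensional subvariety of $\bbP^\ell$ has degree equal to the number of points it contains, so the bound on $\deg\bigcap_i V_i$ is simply a bound on $|\bigcap_i V_i|$.
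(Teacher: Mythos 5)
The paper gives no proof of this lemma at all---it simply cites Fulton, Example 8.4.6, which is precisely the refined B\'ezout inequality you invoke in your final step---so your argument amounts to the same appeal to Fulton's refined intersection theory and is correct. One small correction that does not affect the conclusion: in your proper-intersection warm-up the claimed equality $\deg\bigcap_i V_i = \prod_i \deg V_i$ is false in general, since $[W \cap V_k]$ (with its reduced structure) need not equal the intersection product $[W]\cdot[V_k]$, which carries multiplicities---e.g.\ a line tangent to a smooth conic in $\bbP^2$ meets it properly in a single point while the product of degrees is $2$---but only the inequality is needed, and your refined-B\'ezout step already covers proper intersections.
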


We now record estimates due to Lang and Weil \cite{LangWeil} on the number of points on varieties over finite fields. The following is a well known consequence of Theorem 1 of \cite{LangWeil} (see, for example, Theorem 5.1 of \cite{PoonenSlavov}), but we give the short proof for completeness.

\begin{lemma} \label{langweillem}
Let $k$, $\ell$, $m$, and $r$ be positive integers, and let $q$ be a prime power. Let $V$ be a (reduced) closed subvariety of $\bbP^\ell$, defined over $\F_q$ (the field with $q$ elements), of degree $k$ and dimension $r$. Let $m \ge 1$ be the number of geometrically irreducible components of $V$ which are defined over $\F_q$. Then
	\begin{equation}\label{eq:LangWeil}
		|V(\F_q)|,\ |V^\ns(\F_q)| = mq^r + O_{k,\ell,r}(q^{r-1/2}).
	\end{equation}
Moreover, the same is true if we replace $V$ with a closed subvariety $W \subseteq \bbA^\ell$.
\end{lemma}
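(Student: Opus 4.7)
The plan is to reduce to the classical Lang--Weil theorem (as stated in \cite{LangWeil}) applied componentwise. Over the algebraic closure $\bar{\F}_q$, decompose $V = V_1 \cup \cdots \cup V_n$ into its geometrically irreducible components. Because $V$ is reduced of degree $k$, B\'ezout's Theorem (Lemma \ref{lem:bezout}) bounds both $n$ and $\deg V_i$ purely in terms of $k$ and $\ell$, so all implied constants in what follows will be uniform in $q$. Geometric Frobenius permutes the $V_i$, and the $\F_q$-defined components are exactly the fixed points of this action; by hypothesis there are $m$ such components, and we may assume without loss of generality that $V_1,\ldots,V_m$ are those of top dimension $r$ (lower-dimensional components will be absorbed into the error).

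For each of $V_1,\ldots,V_m$, the classical Lang--Weil theorem gives $|V_i(\F_q)| = q^r + O_{k,\ell,r}(q^{r-1/2})$. Summing produces the main term $mq^r$. Any remaining $\F_q$-defined component of dimension $< r$ contributes at most $O_{k,\ell,r}(q^{r-1})$ by the same (or a trivial) estimate. For a component $V_i$ \emph{not} defined over $\F_q$, any $\F_q$-rational point is Frobenius-fixed, hence lies in $V_i \cap V_i^{\mathrm{Frob}}$; since $V_i^{\mathrm{Frob}} \neq V_i$ and both are irreducible, this intersection is a proper closed subvariety of $V_i$ of strictly smaller dimension, whose degree is controlled by B\'ezout. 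An easy induction on dimension (or a direct application of Lang--Weil to its components) then shows it carries only $O_{k,\ell,r}(q^{r-1})$ points. Pairwise intersections $V_i \cap V_j$ for $i \neq j$ are handled identically, so the inclusion--exclusion correction from overcounting is also $O(q^{r-1})$.

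For the nonsingular version, the key point is that since $V$ is reduced, the singular locus $V^{\sing}$ is a proper closed subvariety of each component, hence has dimension at most $r-1$, and its degree is bounded in terms of $k$ and $\ell$ (e.g.\ via the Jacobian ideal). Thus $|V^{\sing}(\F_q)| = O_{k,\ell,r}(q^{r-1})$, and $|V^\ns(\F_q)|$ inherits the same asymptotic as $|V(\F_q)|$. The affine case reduces to the projective one: for $W \subseteq \bbA^\ell$, pass to the projective closure $\overline{W} \subseteq \bbP^\ell$, apply the projective statement, and subtract $|\overline{W}(\F_q) \cap H_\infty|$, where $H_\infty$ is the hyperplane at infinity; the latter intersection has dimension at most $r-1$ and contributes $O(q^{r-1})$.

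The principal technical hurdle is keeping all implied constants uniform in $q$. This is accomplished entirely by repeated use of B\'ezout to bound the degrees of the geometric components, their Galois conjugates, their pairwise intersections, the singular locus, and the trace at infinity, all in terms of the input data $k$, $\ell$, and $r$.
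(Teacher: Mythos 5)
Your argument is essentially the paper's own proof: apply the classical Lang--Weil estimate to each geometrically irreducible component defined over $\F_q$, absorb the components not defined over $\F_q$ (whose rational points lie on the lower-dimensional intersection with a Galois/Frobenius conjugate) and the pairwise-intersection overcounts into the error, bound $|V^{\sing}(\F_q)|$ via its smaller dimension and B\'ezout-controlled degree, and reduce the affine case to the projective one by removing the hyperplane at infinity. Your gloss that the $\F_q$-defined components may be taken of top dimension $r$ is the same implicit equidimensionality assumption the paper makes, so the proposal matches the paper's route in substance and in all uniformity-in-$q$ bookkeeping.
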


\begin{proof}
The proof is by induction on $r$, noting that the case $r = 0$ is elementary, and amounts to considering the following observations.
	\begin{enumerate}[1.]
	\item If $P \in Z(\F_q)$ for a component $Z \subset V$ not defined over $\F_q$, then $P = P^\sigma \in Z^\sigma \ne Z$ for nontrivial $\sigma \in \Gal(\overline{\F_q}/\F_q)$, hence $P \in Z \cap Z^\sigma$, which has dimension strictly less than $r$. Thus, the number of points on components not defined over $\F_q$ is absorbed into the error term.
	
	\item Each component of $V$ defined over $\F_q$ has $q^r + O_{k,\ell,r}(q^{r-1/2})$ by Theorem 1 of \cite{LangWeil}. Summing the number of points on each component is an overcount, but the surplus is due to points on pairwise intersections of components, which again is absorbed into the error term. (Note that $m \le k$, so even after multiplying the error by $m$, the implied constant still depends only on $k$, $\ell$, and $r$.) Thus $|V(\F_q)|$ has the claimed magnitude.
	
	\item We have $V^\ns := V \setminus V^\sing$; since $V^\sing$ has dimension at most $r - 1$ and degree controlled by $k$, $r$, and $\ell$ (by B\'ezout's Theorem), the size of $V^\sing(\F_q)$ is included in the error term. Thus, $|V^\ns(\F_q)|$ also has the desired magnitude.
	
	\item Finally, if we let $V$ be the projective closure of $W$, then $W = V \setminus (V \cap H)$, where $H$ is the hyperplane at infinity. Since $V \cap H$ has lower dimension and degree $k$, we are once again removing a set whose cardinality is subsumed by the error term, so $W(\F_q)$ (and, similarly, $W^\ns(\F_q)$) has the appropriate cardinality. \qedhere
	\end{enumerate}
\end{proof}

\subsection{A key equivalence} The following equivalence observation yields a strengthening of Proposition \ref{prop:main} as a corollary, and is also instrumental in subsequent proofs.

\pagebreak

\begin{lemma}\label{lem:equiv}
Let $V$ be a variety (reduced, but not necessarily irreducible) of dimension $d \ge 1$ defined over $\Z$. For a sufficiently large (with respect to $V$) prime $p$, the following are equivalent:
	\begin{enumerate}[(a)]
	\item $V^\ns(\F_p) \ne \emptyset$.
	\item $V^\ns(\Z_p) \ne \emptyset$.
	\item At least one of the geometric components of $V$ is defined over $\Z_p$.
	\end{enumerate}
\end{lemma}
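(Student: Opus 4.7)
My plan is to prove $(a) \Longleftrightarrow (b)$ via Hensel's lemma, and to prove $(a) \Longleftrightarrow (c)$ via a combination of Lang--Weil estimates (Lemma \ref{langweillem}) and an elementary Galois action argument on geometric components. First, though, I would record a setup step: for $p$ larger than some $p_0 = p_0(V)$, the scheme-theoretic reduction $V_{\F_p}$ is a reduced $\F_p$-scheme of the same dimension $d$, its geometric irreducible components are in bijection with those of $V$ via reduction modulo $p$, and a geometric component descends to $\Z_p$ if and only if its reduction descends to $\F_p$ (equivalently, is fixed by Frobenius). This good-reduction statement follows from standard spreading-out, and lets me freely translate between the arithmetic and residual Galois pictures.

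For $(a) \Longleftrightarrow (b)$, given a nonsingular $\F_p$-point $\bar{P}$, I would pass to an affine chart where some $d$-by-$d$ minor of the Jacobian of the defining equations is nonzero at $\bar{P}$, and apply Hensel's lemma to lift $\bar{P}$ coordinate-by-coordinate to a $\Z_p$-point $P$; nonsingularity of $P$ is immediate since the lifted Jacobian minor is a unit in $\Z_p$. Conversely, a nonsingular $\Z_p$-point has a nonsingular reduction for $p > p_0$, since the relevant Jacobian minor has integral value whose reduction is nonzero in $\F_p$ for all but finitely many $p$.

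For $(c) \Longrightarrow (a)$, let $Z$ be a geometric component of $V$ defined over $\Z_p$, so its reduction $\bar{Z} \subseteq V_{\F_p}$ is a geometrically irreducible $\F_p$-subvariety of dimension $d$. By Lemma \ref{langweillem} (with $m = 1$), $|\bar{Z}^{\ns}(\F_p)| = p^d + O_V(p^{d-1/2})$. A point of $\bar{Z}^{\ns}(\F_p)$ that is singular on $V$ must lie in the intersection of $\bar{Z}$ with some other geometric component $Z'$; each such intersection has dimension at most $d-1$ and degree bounded by B\'ezout (Lemma \ref{lem:bezout}), so applying Lang--Weil to these intersections gives a total contribution of $O_V(p^{d-1})$. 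For $p$ large the main term dominates and $V^{\ns}(\F_p) \neq \emptyset$. For $(a) \Longrightarrow (c)$, I would argue the contrapositive: if no geometric component of $V$ is defined over $\Z_p$, then no geometric component of $V_{\F_p}$ is defined over $\F_p$, so for any $P \in V(\F_p)$ and any geometric component $Z$ containing $P$, there exists $\sigma \in \Gal(\overline{\F_p}/\F_p)$ with $Z^\sigma \neq Z$, and $P = P^\sigma \in Z \cap Z^\sigma$, placing $P$ on two distinct components and hence in $V^{\sing}(\F_p)$.

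The main obstacle is the setup phase: making rigorous the bijection between geometric components of $V$ (over $\QQbar$, defined over $\Z$) and those of $V_{\F_p}$ for large $p$, and the equivalence of descent to $\Z_p$ with descent to $\F_p$ after reduction. Everything downstream is then either Hensel, Lang--Weil plus dimension/degree bookkeeping via B\'ezout, or a two-line Galois orbit argument, so the heart of the work is really just organizing the good-reduction dictionary so that these tools can be applied uniformly.
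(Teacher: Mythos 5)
Your overall architecture (Hensel for lifting, Lang--Weil plus B\'ezout-type bookkeeping for producing nonsingular $\F_p$-points on a component defined over $\Z_p$, and a Galois-orbit argument to handle components not defined over the base) matches the paper's proof, which runs the cycle (a) $\implies$ (b) $\implies$ (c) $\implies$ (a). However, there is a genuine gap in the one implication you handle differently: your claimed proof of (b) $\implies$ (a). You assert that a nonsingular $\Z_p$-point has nonsingular reduction for $p>p_0$ because ``the relevant Jacobian minor has integral value whose reduction is nonzero in $\F_p$ for all but finitely many $p$,'' but the point (and hence the minor, an element of $\Z_p$) varies with $p$, so an ``all but finitely many $p$'' argument does not apply; moreover the minor, while nonzero, need not be a unit in $\Z_p$. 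Concretely, for $V \subseteq \bbA^2$ defined by $y^2 = x^3$, the point $(p^2,p^3)$ lies in $V^\ns(\Z_p)$ for every $p$ but reduces to the singular origin, so reduction of the given nonsingular point simply does not prove (a). Since this is your only arrow out of (b), the equivalence is not established as written.

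The repair is exactly the paper's step (b) $\implies$ (c): a point $P \in V^\ns(\Z_p)$ lies on a unique geometric component $Z$ (if it lay on two, it would be singular on the reduced variety $V$), and since $P$ is fixed by $\Gal(\QQbar_p/\Q_p)$ while the Galois action permutes components, $Z = Z^\sigma$ for all $\sigma$, so $Z$ is defined over $\Z_p$; then your (c) $\implies$ (a) closes the loop and (b) $\implies$ (a) follows. Note also that this characteristic-zero Galois argument at the given $\Z_p$-point lets the paper avoid most of the spreading-out dictionary (matching components of $V$ with components of $V_{\F_p}$ and comparing descent to $\Z_p$ with descent to $\F_p$) that you flag as the main obstacle and invoke in your contrapositive proof of (a) $\implies$ (c); that dictionary is standard but is an additional burden your route takes on, whereas the only place the paper needs reduction of components is in (c) $\implies$ (a), where Lang--Weil is applied with constants depending only on degree and dimension.
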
 
 
\begin{proof}\mbox{}

\noindent((a) $\implies$ (b)) Suppose $V^\ns(\F_p) \ne \emptyset$, and let $Q \in V^\ns(\F_p)$. By Hensel's lemma, there exists $P \in V(\Z_p)$ such that $\widetilde{P} = Q$. Since $\widetilde{P}$ is nonsingular, so must be $P$.
 
\noindent ((b) $\implies$ (c))
Let $P \in V^\ns(\Z_p)$, and let $Z$ be a geometric component of $V$ containing $P$. As in part 1 of the proof of Lemma~\ref{langweillem}, if $Z$ were not defined over $\F_q$, then $P$ would lie in the intersection of two components, hence would be a singular point on $V$, contradicting our assumption on $P$.


\noindent ((c) $\implies$ (a)) 
Let $Z_1,\ldots,Z_m$ be the irreducible components of $V$. By Lemma \ref{langweillem}, for each $1 \le i \le m$ there exists a bound $B_i$ such that for all $p \ge B_i$ with $Z_i$ defined over $\Z_p$, $Z_i^\ns(\F_p)$ contains a point that does not lie on $Z_j$ for any $j \ne i$. Letting $B = \max\{B_1,\ldots,B_m\}$, we have that for $p \ge B$, the existence of $Z_i$ defined over $\Z_p$ implies the existence of $Q \in Z_i^\ns(\F_p) \setminus \bigcup_{j\ne i} Z_j(\F_p)$. Since $Q$ is nonsingular on $Z_i$ and is not a point of intersection with any other component $Z_j$, we have $Q\in V^\ns(\F_p)$. 
\end{proof}  

As previously noted, if $h\in \Z[x_1,\dots,x_{\l}]$ is Deligne, then $h=0$ defines a reduced variety. Further, a nonsingular point over $\Z_p$ on this variety corresponds precisely to a root $\bsz_p\in \Z_p^{\l}$ of $h$ satisfying $m_p=1$, hence Lemma \ref{lem:equiv} establishes the following sufficient condition for smooth intersectivity. Here we let $\ZZbar$ denote the ring of algebraic integers.
 
\begin{corollary}\label{prop:main_stronger}
Suppose $\ell \ge 2$ and $h \in \Z[x_1,\ldots,x_{\ell}]$ is Deligne and intersective, and let $h=g_1\cdots g_n$ be an irreducible factorization of $h$ in $\ZZbar[x_1,\dots,x_{\l}]$. If, for all but finitely many $p\in \P$, $g_i$ has coefficients in $\Z_p$ for some $1\leq i \leq n$, then $h$ is smoothly intersective, hence strongly Deligne.
\end{corollary}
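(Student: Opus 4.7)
The plan is to verify the hypothesis of the smoothly intersective definition and then invoke Proposition \ref{thm:main}. In other words, I want to show that for all but finitely many primes $p$ there exists a root $\bsz_p \in \Z_p^\l$ of $h$ at which $m_p = 1$. This translates geometrically to finding a nonsingular $\Z_p$-point on the affine variety $V = \{h = 0\} \subseteq \bbA^\l$, which is exactly the setting of Lemma \ref{lem:equiv}.

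To apply Lemma \ref{lem:equiv} I first need $V$ to be reduced of dimension at least $1$. Dimension is immediate: $V$ is a hypersurface in $\bbA^\l$ with $\l \geq 2$, so $\dim V = \l - 1 \geq 1$. For reducedness, I would argue that the Deligne hypothesis forces $h$ itself to be squarefree in $\ZZbar[x_1,\dots,x_\l]$. Writing $h = g_1 \cdots g_n$ as given, the top degree homogeneous part satisfies $h^k = g_1^{k_1}\cdots g_n^{k_n}$, where here $g_i^{k_i}$ denotes the homogeneous leading form of $g_i$ (of degree $k_i = \deg g_i$). Since $h$ is Deligne, the hypersurface $\{h^k = 0\}\subseteq \bbP^{\l-1}$ is smooth, hence $h^k$ is squarefree, forcing the leading forms $g_i^{k_i}$ to be pairwise non-associate; in particular the irreducible factors $g_i$ are pairwise non-associate, and $V$ is reduced.

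Now I apply Lemma \ref{lem:equiv} to $V$. By hypothesis, for all but finitely many primes $p$ some irreducible factor $g_i$ has coefficients in $\Z_p$, so the geometric component $\{g_i = 0\}$ of $V$ is defined over $\Z_p$ and condition (c) of Lemma \ref{lem:equiv} holds. Therefore condition (b) also holds, giving a point $\bsz_p \in V^\ns(\Z_p)$ for all sufficiently large $p$. Such a point satisfies $h(\bsz_p) = 0$ and $\nabla h(\bsz_p) \not\equiv \bszero$, i.e.\ $m_p = 1$. For the finitely many excluded primes, the intersectivity of $h$ supplies an arbitrary root $\bsz_p \in \Z_p^\l$, with $m_p$ possibly larger. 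This yields a choice $\{\bsz_p\}_{p\in \P}$ witnessing that $h$ is smoothly intersective, and Proposition \ref{thm:main} then gives that $h$ is strongly Deligne.

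The main conceptual step is the reducedness argument in Step 1: one must use that Deligne controls not just the smoothness of the projective hypersurface $\{h^k = 0\}$ but also transmits squarefreeness back to the affine polynomial $h$ via multiplicativity of leading forms. Once this is in place, the rest is a direct translation through Lemma \ref{lem:equiv} and an appeal to Proposition \ref{thm:main}, both already proved earlier in the paper.
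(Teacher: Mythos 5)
Your proposal is correct and follows essentially the same route as the paper: use the Deligne hypothesis to see that $\{h=0\}$ is reduced, translate "nonsingular $\Z_p$-point" into "root with $m_p=1$", apply the implication (c)~$\Rightarrow$~(b) of Lemma~\ref{lem:equiv} to the components supplied by the hypothesis on the $g_i$, and conclude via Proposition~\ref{thm:main}. The only difference is that you spell out the squarefreeness/reducedness step (leading forms of the $g_i$ pairwise non-associate because $h^k$ is squarefree), which the paper merely asserts as previously noted.
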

 
Note that Proposition~\ref{prop:main} is an immediate consequence of Corollary~\ref{prop:main_stronger}, since the hypotheses of the proposition imply that one of the factors over $\ZZbar$ is defined over $\Z$, hence over $\Z_p$ for all $p$. We now complete this section by using Lemma \ref{lem:equiv} to prove Proposition \ref{thm:main}.

\begin{proof}[Proof of Proposition~\ref{thm:main}]
Let $\l\geq 2$, and suppose $h\in \Z[x_1,\dots,x_{\l}]$ is Deligne and intersective with $\deg(h)=k\geq 2$. Let $\{\bsz_p\}_{p\in \P}$ be a choice of $p$-adic integer roots of $h$ satisfying $m_p\in \{1,k\}$ for all but finitely many $p$. 
Let $X$ denote the finite set of primes such that \\[-20pt]

	\begin{itemize}
	\item $m_p\notin \{1,k\}$, or \\[-18pt]
	\item $p\mid k$, or \\[-18pt]
	\item $h^k$ is not smooth modulo $p$, or \\[-18pt]
	\item the equivalence in Lemma \ref{lem:equiv} fails.
	\end{itemize}
\noindent We note that the first item is assumed to be finite, the second item is clearly finite, the fourth item is proven to be finite in Lemma \ref{lem:equiv}, and the third item is finite because $h$ is Deligne (see Definition~\ref{defn:smoothDeligne}). 

\noindent Fix $d\in \N$ and $p\notin X$. If $p\nmid d$ or $m_p=k$, then $p\nmid \frac{d^k}{\lambda(d)}$, so $h_d^k=\frac{d^k}{\lambda(d)}h^k$ is a nonzero scalar multiple of $h^k$, hence remains smooth modulo $p$. Therefore, $h_d$ is Deligne modulo $p$.

\noindent The remaining case is $p\mid d$ and $m_p=1$. In this case, since $h^i_d$ has a factor of $\frac{d^i}{\lambda(d)}$, the definition of $\lambda$ assures that the polynomial $h^i_d$ identically vanishes modulo $p$ for all $i>1$. Since nonzero homogeneous linear polynomials are automatically smooth, we need only argue that $$h_d^1(\bsx)=\frac{d}{\lambda(d)}\sum_{i=1}^{\l}\frac{\partial h}{\partial x_i} (\bsr_d)x^i$$ does not identically vanish modulo $p$. We know that $p\nmid \frac{d}{\lambda(d)}$ by definition of $\lambda$. Further, the fact that $h$ is Deligne ensures that $h=0$ defines a reduced variety, so by Lemma \ref{lem:equiv}, we can choose $\bsz_p$ to reduce to a nonsingular point over $\F_p$. Since $\bsr_d\equiv \bsz_p \ (\text{mod }p)$, we have that $\frac{\partial h}{\partial x_i} (\bsr_d)\equiv \frac{\partial h}{\partial x_i} (\bsz_p)\not\equiv 0 \ (\text{mod }p)$ for some $1\leq i \leq \l$, as required. Therefore, $h_d$ is Deligne modulo $p$ for all $p\notin X$, hence $h$ is strongly Deligne. 
\end{proof}

\section{Dimension lowering argument}\label{dimlowsec}

In this section, we generalize the phenomenon exemplified at the beginning of Section \ref{sec:singular}, establishing Theorems \ref{dimlowrootthm} and \ref{dimlowthm} by reducing to the case covered in Theorem \ref{more}. In the integer root setting, this reduction is very direct, as Theorem \ref{dimlowrootthm} follows immediately from Theorem \ref{more} and the following proposition.

\begin{proposition}\label{prop:dim_lower_0}
Suppose $\l \ge 2$ and $h \in \Z[x_1,\ldots,x_{\l}]$ with $h(\boldsymbol{0}) = 0$. Let $r$ be the minimum rank of the highest and lowest degree homogeneous parts of $h$. If $r\geq 2$, then there exists a strongly Deligne polynomial $g \in \Z[x_1,\ldots,x_r]$  such that $g(\Z^r) \subseteq h(\Z^{\l})$.
\end{proposition}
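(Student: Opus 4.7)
My plan is to find a full-rank integer linear substitution $L \in M_{\ell \times r}(\Z)$ such that $g(\boldsymbol{y}) := h(L\boldsymbol{y}) \in \Z[y_1,\ldots,y_r]$ satisfies the hypotheses of Proposition~\ref{prop:integer_root}: namely $g(\bszero)=0$ and both the highest and lowest degree homogeneous parts of $g$ are smooth. Since $g(\Z^r) = h(L(\Z^r)) \subseteq h(\Z^{\l})$, this will immediately yield the proposition. The intuition is that the rank hypothesis forces the singular loci of the projective hypersurfaces cut out by $h^k$ and $h^j$ to be small enough in dimension to be missed by a generic $(r-1)$-dimensional linear subspace of $\bbP^{\ell-1}$, while Bertini guarantees transversality to the smooth locus.

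Write $h = h^j + h^{j+1} + \cdots + h^k$, with $j \geq 1$ (forced by $h(\bszero)=0$) and $k = \deg(h)$. For any $L$, the composition $g = h \circ L$ satisfies $g(\bszero)=0$ and $g^i(\boldsymbol{y}) = h^i(L\boldsymbol{y})$. Viewing the column span of $L$ as a projective linear subspace $W \cong \bbP^{r-1} \subseteq \bbP^{\ell-1}$, the vanishing locus of $g^i$ in $W$ is precisely $\widehat{V}_i \cap W$, where $\widehat{V}_i \subseteq \bbP^{\l-1}$ is the vanishing locus of $h^i$. The rank hypothesis gives $\dim \widehat{V}_i^{\sing} \leq \ell - 1 - r$ for $i \in \{j,k\}$, so
\[
\dim W + \dim \widehat{V}_i^{\sing} \leq (r-1) + (\ell-1-r) = \ell - 2 < \dim \bbP^{\ell-1}.
\]
Hence a generic $W$ avoids $\widehat{V}_i^{\sing}$ entirely, and by Bertini's theorem applied to the smooth quasi-projective variety $\widehat{V}_i^{\ns}$, a generic $W$ meets $\widehat{V}_i^{\ns}$ transversally. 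Combining these, $\widehat{V}_i \cap W$ is smooth in $W$ for generic $W$. Adding the (open dense) conditions that $L$ has full rank $r$ and that $g^k, g^j$ do not vanish identically (i.e., $W \not\subseteq \widehat{V}_i$), we obtain an open dense subset $U$ of the Grassmannian $\mathrm{Gr}(r,\ell)$ parametrizing suitable $L$.

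The final step is to realize $U$ over $\Z$: as $U$ is open dense in an irreducible variety over $\Q$, it contains $\Q$-rational points, and clearing denominators produces an integer matrix $L$ in $U$ (uniform rescaling of columns affects each $g^i$ only by a nonzero scalar, so does not disturb smoothness). With such an $L$ in hand, Proposition~\ref{prop:integer_root} applies directly to $g$, yielding that $g$ is strongly Deligne.

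The main obstacle I anticipate is the rigorous formulation of the Bertini/transversality step---specifically, verifying simultaneously for both $i = j$ and $i = k$ that the parameter set of good $W$ is genuinely Zariski-open and dense in $\mathrm{Gr}(r,\ell)$. The codimension count above handles the avoidance of the singular loci, and Bertini supplies the transversality to the smooth loci, but one must also rule out the degenerate case $W \subseteq \widehat{V}_i$ (automatic once $g^i \ne 0$, which is itself an open condition as $h^i \ne 0$). Once these generic properties are assembled, the rest of the proof is a direct application of Proposition~\ref{prop:integer_root}.
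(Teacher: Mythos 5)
Your proposal is correct and follows essentially the same route as the paper: restrict $h$ to a generic integer linear subspace of dimension $r$, using Bertini-type genericity (plus the codimension count coming from the rank hypothesis and Zariski density of rational points) to keep the top and bottom homogeneous parts smooth, and then invoke Proposition~\ref{prop:integer_root}. The only difference is packaging---you pick one generic $(r-1)$-plane from the Grassmannian in a single step, whereas the paper iterates hyperplane sections $\l-r$ times via its Bertini statement (Theorem~\ref{thm:bertini}), tracking the drop in dimension of the singular locus at each step; the substance is the same.
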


Before delving into the proof of this proposition, we state a version of Bertini's theorem that will allow us to eliminate the singularity in the top-degree parts of our polynomials, one dimension at a time. 
Throughout this section we let $(\bbP^n)^*$ denote the dual space of $\bbP^n$, that is, the space of hyperplanes in $\bbP^n$. Note that $(\bbP^n)^*$ is isomorphic to $\bbP^n$, with the hyperplane $\{a_0x_0 + \cdots + a_nx_n = 0\} \in (\bbP^n)^*$ corresponding to the point $(a_0 : \cdots : a_n) \in \bbP^n$. A {\it linear system of hyperplanes in $\bbP^n$} is a linear subspace of $(\bbP^n)^*$.

\pagebreak

\begin{theorem}[Bertini's Theorem]\label{thm:bertini}
Let $V$ be a (quasi-projective) subvariety of $\bbP^n$ with irreducible components $V_1,\ldots,V_m$ of equal dimension $d \ge 1$, and let $\calL \subseteq (\bbP^n)^*$ be a linear system. After a change of coordinates if necessary, we may assume that there exists $k \in \{0,\ldots,n\}$ such that $\calL$ is the space of all hyperplanes of the form $\{a_kx_k + \cdots + a_nx_n = 0\}$. Assume that the coordinates $x_k, \ldots, x_n$ do not simultaneously vanish at any point on $V$ (i.e., the linear system $\calL$ has no base-points in $V$), so that
	\begin{align*}
	\Phi_\calL : V &\longrightarrow \bbP^{n-k}\\
	(z_0 : \cdots : z_n) &\longmapsto (z_k : \cdots : z_n)
	\end{align*}
defines a morphism.
Then there exists a nonempty open subset $U \subseteq \calL$ such that for all hyperplanes $H \in U$,
	\begin{enumerate}[(a)]
	\item $V^\ns \cap H$ is nonsingular, and
	\item $\dim \left(V^\sing \cap H\right) < \dim V^\sing$ (if $V^\sing \neq \emptyset$).
	\end{enumerate}

\noindent Moreover, if $\dim \Phi_\calL(V) \ge 2$, then $U$ may be chosen so that for all $H \in U$ we have
	\begin{enumerate}[(a)]
	\setcounter{enumi}{2}
	\item for all $1 \le i \le m$, the intersection $V_i \cap H$ is either empty or geometrically irreducible.
	\end{enumerate}
\end{theorem}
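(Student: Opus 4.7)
My plan is to deduce all three conclusions from the classical Bertini toolkit applied to the universal incidence variety $\mathcal{I} = \{(P, H) \in V \times \calL : P \in H\}$, together with its two projections $\pi_1 : \mathcal{I} \to V$ and $\pi_2 : \mathcal{I} \to \calL$. The fiber of $\pi_1$ over a point $P \in V$ is the linear subspace of $\calL$ consisting of hyperplanes through $P$; it has codimension one in $\calL$ because the base locus of $\calL$ is empty on $V$. Thus $\dim \mathcal{I} = \dim V + \dim \calL - 1$, and each conclusion will come from restricting $\pi_2$ to a suitable subvariety of $\mathcal{I}$. Since $V$ is defined over $\Z$ we may work over $\Q$, so the generic smoothness theorem is available throughout.

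For (a), restrict to $\mathcal{I}^\ns := \pi_1^{-1}(V^\ns)$, which is smooth because $\pi_1|_{\mathcal{I}^\ns}$ exhibits it as a projective bundle over the smooth $V^\ns$. The restriction $\pi_2|_{\mathcal{I}^\ns}$ is dominant (every $H \in \calL$ meets $V^\ns$, again by base-point-freeness), so by generic smoothness a general fiber $\pi_2^{-1}(H) \cap \mathcal{I}^\ns = V^\ns \cap H$ is smooth, giving a nonempty open $U_a \subseteq \calL$ on which (a) holds. For (b), each irreducible component $C$ of $V^\sing$ is not contained in the base locus, so $\{H \in \calL : C \subseteq H\}$ is a proper linear subspace of $\calL$; the finite union over components of $V^\sing$ is a proper closed subset of $\calL$, and on its complement $U_b$ we have $H \cap C \subsetneq C$ for every $C$, hence $\dim(H \cap C) < \dim C$ and consequently $\dim(H \cap V^\sing) < \dim V^\sing$.

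For (c), I would apply classical Bertini irreducibility component by component: for each $V_i$ satisfying $\dim \Phi_\calL(V_i) \geq 2$, the classical theorem (in characteristic zero) produces a dense open $U_i \subseteq \calL$ on which $V_i \cap H$ is geometrically irreducible; for components with $\dim \Phi_\calL(V_i) = 0$, $V_i$ lies in a single fiber of $\Phi_\calL$ and a generic $H$ misses that fiber, so $V_i \cap H = \emptyset$. Setting $U = U_a \cap U_b \cap \bigcap_i U_i$ then yields the theorem.

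The main obstacle I anticipate is the remaining case in (c): a component with $\dim \Phi_\calL(V_i) = 1$. Here $V_i \cap H = \Phi_\calL|_{V_i}^{-1}(\widehat H \cap \Phi_\calL(V_i))$ is a union of fibers of $\Phi_\calL|_{V_i}$ over the finite set $\widehat H \cap \Phi_\calL(V_i)$, and can fail to be irreducible for a generic $H$. The hypothesis $\dim \Phi_\calL(V) \geq 2$ only forces this behaviour for at least one component, not for all. To close this gap one must either argue that such intermediate components do not occur in the intended applications of Section~\ref{dimlowsec} (where $V$ typically arises as an irreducible hypersurface or singular locus for which $\dim \Phi_\calL(V_i) \geq 2$ holds componentwise), or strengthen the hypothesis by imposing $\dim \Phi_\calL(V_i) \geq 2$ for each $i$. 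Parts (a) and (b) are by contrast robust and follow directly from the dimension count and generic smoothness argument above.
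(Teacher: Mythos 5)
Your parts (a) and (b) are correct, and your route for (a) is genuinely different from the paper's: you re-derive Bertini smoothness from scratch via the incidence variety $\mathcal{I}$ and generic smoothness in characteristic zero, whereas the paper simply cites the standard Bertini theorem (Corollaire 6.11 of Jouanolou, or Hartshorne Cor.\ 10.9 and Remark 10.9.1) component by component, obtaining that $V_i^\ns \cap H$ is nonsingular for each $i$, and then recovers (a) by writing $V^\ns$ as a disjoint union of open pieces $W_i \subseteq V_i^\ns$. One small repair to your (a): base-point-freeness does \emph{not} guarantee that every $H \in \calL$ meets $V^\ns$ (in the paper's second application some hyperplanes in $\calL$ miss $V$ entirely), so $\pi_2|_{\mathcal{I}^\ns}$ need not be dominant; this is harmless, since when it is not dominant the generic fiber is empty and (a) holds vacuously, but the justification as written is wrong. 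Your (b) is essentially the paper's argument, phrased there as the open condition (b$'$) that $H$ contain no component of $V_i^\sing$ nor of $V_i \cap V_j$.

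On (c), the issue you raise is real as a reading of the literal statement, but it is an imprecision in the hypothesis rather than a missing idea in the proof: the paper's own proof does exactly what you propose, namely apply the classical irreducibility form of Bertini component by component, and that theorem requires $\dim \Phi_\calL(V_i) \ge 2$ for the component in question (the paper's parenthetical ``if $d \ge 2$'' in condition (c$'$) glosses this distinction). Nothing breaks for the reason you anticipate: in the only two situations where the theorem is invoked (spelled out in the remark immediately following it), either $\Phi_\calL$ is the inclusion of a closed hypersurface in $\bbP^{n}$ (case 1), or $V$ is closed in $\bbA^{n}$ and avoids the origin, so $\Phi_\calL$ has finite fibers on every component (case 2); in both cases $\dim \Phi_\calL(V_i) = d = \dim \Phi_\calL(V) \ge 2$ for every $i$, and the componentwise application of Bertini irreducibility is justified. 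So your proposed fixes---strengthen the hypothesis to the componentwise condition, or verify it in the intended applications---are exactly how the paper should be (and implicitly is) read; with that reading your argument completes and coincides with the paper's, your treatment of components with zero-dimensional image being a case the paper does not even need.
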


\begin{rem}
Theorem~\ref{thm:bertini} is stated somewhat more generally than we need, so we specify the two situations for which we will actually need the result:
	\begin{enumerate}[1.]
	\item Let $V$ be a closed hypersurface in $\bbP^n$ and let $\calL = (\bbP^n)^*$. Then $\Phi_\calL$ is just the inclusion map of $V$ into $\bbP^n$, and the hypotheses of Theorem~\ref{thm:bertini} are satisfied. Moreover, since each component $V_i$ is a closed subvariety of $\bbP^n$ of positive dimension, each intersection $V_i \cap H$ is nonempty; thus, if $d = \dim V = \dim \Phi_\calL(V) \ge 2$, then $V_i \cap H$ is irreducible for all $1 \le i \le m$ and all $H \in U$.
	\item Identify $\bbA^n$ with the Zariski open subset $\{x_0 \ne 0\} \subset \bbP^n$. Let $V$ be a closed hypersurface in $\bbA^n$ {\it not containing the origin $\boldsymbol{0} = (0,\ldots,0)$}, and let $\calL$ be the space of all hypersurfaces of the form $\{a_1x_1 + \cdots + a_nx_n = 0\}$. Then the conditions of Theorem~\ref{thm:bertini} are satisfied once again. A fiber of $\Phi_\calL$ is precisely the intersection of $V$ with a line in $\bbA^n$ passing through $\boldsymbol{0}$. Since $V$ is closed in $\bbA^n$ and does not contain $\boldsymbol{0}$, $V$ cannot contain a line through $\boldsymbol{0}$, hence each such intersection is finite. In particular, this means the map $\Phi_\calL$ has finite fibers, so $\dim \Phi_\calL(V) = \dim V = d$. Moreover, the failure of a hyperplane $H \in \calL$ to intersect every $V_i$ is a proper Zariski closed condition.
	Therefore, removing such hyperplanes from $U$ if necessary, we again have that $V_i \cap H$ is nonempty for all $1 \le i \le n$ and $H \in U$, thus $V_i \cap H$ is irreducible for all $1 \le i \le n$ and $H \in U$ as long as $d \ge 2$.
	\end{enumerate}
\end{rem}

%

\begin{proof}[Proof of Theorem~\ref{thm:bertini}]
Consider the set $\calX$ of hyperplanes $H \in \calL$ satisfying the following conditions:
	\begin{enumerate}[(a$'$)]
	\item $V_i^\ns \cap H$ is nonsingular for all $1 \le i \le m$;
	\item $H$ does not contain any components of $V_i^\sing$ nor $(V_i \cap V_j)$ for $1 \le i,j \le m$ with $i \ne j$; and
	\item for all $1 \le i \le m$, the intersection $V_i \cap H$ is either empty or geometrically irreducible (if $d \ge 2)$.
	\end{enumerate}

\noindent We begin by showing that if $H \in \calX$, then $H$ satisfies properties (a), (b), and (c). Indeed, condition (c$'$) is exactly condition (c), so we need only show that $H$ also satisfies (a) and (b).

\noindent For (a), note that a point $P \in V$ is nonsingular if and only if $P$ is a nonsingular point on $V_i$ for some $1 \le i \le m$ and $P \notin V_j$ for all $j \ne i$. Thus $V^\ns$ is a {\it disjoint} union $V^\ns = \bigsqcup_{i=1}^m W_i$, where each $W_i$ is a subset of $V_i^\ns$. Then (a) follows from (a$'$) since $V^\ns \cap H = \bigsqcup_{i=1}^m (W_i \cap H)$ and each $W_i \cap H \subseteq V_i^\ns \cap H$ is nonsingular. Finally, (b$'$) implies that $H$ intersects each component of $V^\sing$ properly (assuming $V^\sing \ne\emptyset$), so (b) follows.

\noindent It remains to show that $\calX$ contains a Zariski open subset of $\calL$.
By the standard form of Bertini's Theorem (see Corollaire 6.11 of \cite{Jouanolou}, or Corollary 10.9 and Remark 10.9.1 of \cite{Hartshorne}), since $\calL$ has no base-points in $V$, the set of hyperplanes $H \in \calL$ satisfying (a$'$) and (c$'$) contains a nonempty open subset of $\calL$. Moreover, $H$ containing any of a finite collection of nonempty subvarieties of $\bbP^n$ is a proper closed condition on $H$, so condition (b$'$) is a nonempty open condition; therefore, $\calX$ contains a nonempty open subset of $\calL$.
\end{proof}
Armed with Theorem~\ref{thm:bertini}, the proof of Proposition \ref{prop:dim_lower_0} is pleasingly straightforward.
\begin{proof}[Proof of Proposition~\ref{prop:dim_lower_0}]
Suppose that  $\l \ge 2$ and $h \in \Z[x_1,\ldots,x_{\l}]$ with $h(\boldsymbol{0}) = 0$. Let $k$ and $j$ denote the highest and lowest degrees, respectively, of the terms appearing in $h$, and let $r$ denote the minimum rank of $h^k$ and $h^j$. Let $\Vhat_k,\Vhat_j\subseteq \bbP^{\l-1}$ denote the varieties defined by $h^k=0$ and $h^j=0$, respectively. By 
Theorem~\ref{thm:bertini} (see also case 1 of the remark that follows)
applied to the linear system $\calL = (\bbP^{\l - 1})^*$ and the varieties $\Vhat_k$ and $\Vhat_j$, respectively, the set of hyperplanes $H$ in $\bbP^{\l - 1}$ satisfying \\[-20pt]
	\begin{itemize}
	\item $H \cap \Vhat_k^\ns$ and $H \cap \Vhat_j^\ns$ are nonsingular, and \\[-18pt]
	\item $\dim (H \cap \Vhat_k^\sing) < \dim \Vhat_k^\sing$, if $V^\sing \neq \emptyset$, and $\dim (H \cap \Vhat_j^\sing) < \dim \Vhat_j^\sing$, if $\Vhat_j^\sing\neq \emptyset$, 
	\end{itemize}
contains a nonempty open subset $U \subseteq (\bbP^{\l-1})^*$. Thus, we can choose $H \in U$ defined by the vanishing of $l(x_1,\ldots,x_{\l}) = a_1x_1 + \cdots + a_{\l-1}x_{\l-1} - x_{\l}$ with $a_1,\ldots,a_{\l-1} \in \Z$. Here, we're using the fact that the set of integer points is Zariski dense in the affine space $\bbA^{\l-1} \subset \bbP^{\l-1} \cong (\bbP^{\l-1})^*$.

\noindent Let $\mu(x_1,\ldots,x_{\l-1}) := a_1x_1 + \cdots + a_{n-1}x_{\l-1}$, and set
	\[
		g_1(x_1,\ldots,x_{\l-1}) := h(x_1,\ldots,x_{\l-1},\mu(x_1,\ldots,x_{\l-1})).
	\]
Note that, by construction, $g_1(\Z^{\l-1}) \subseteq h(\Z^\l)$, $g_1(\bszero)=0$, and the highest and lowest degrees of the nonzero terms of $g_1$ are still $k$ and $j$, respectively.

\noindent Now, the subvariety $\What_k$ (resp., $\What_j$) of $\bbP^{\l-2}$ defined by $g_1^k = 0$ (resp., $g_1^j = 0$) is isomorphic to $H \cap \Vhat_k$ (resp., $H \cap \Vhat_j$). In particular, the minimum rank of $g_1^k$ and $g_1^j$ can only drop below $r$ if both singular loci were originally empty, which would imply $r=\l$. Thus, repeating this process $(\l - r)$ times yields a sequence of polynomials $\left(g_i(x_1,\ldots,x_{\l-i})\right)_{i=0}^{\l-r}$, with $g_0 := h$, satisfying \\[-20pt]
	\begin{itemize}
	\item $g_i(\Z^{\l-i}) \subseteq g_{i-1}(\Z^{\l-i+1})$ for all $1 \le i \le \l - r$,  \\[-18pt]
	\item $g_i(\bszero)=0$ for all $0 \le i \le \l - r$, \\[-18pt]
	\item the highest and lowest degrees of the nonzero terms of each $g_i$ are $k$ and $j$, respectively, and \\[-18pt]
	\item the rank of each $g_i^k$ (resp., $g_i^j$) is at least $r$.
	\end{itemize}
Finally, let $g := g_{\l-r}\in \Z[x_1,\dots,x_r]$, so the rank for each of $g^k$ and $g^j$ is $r$. In other words, $g^k$ and $g^j$ are smooth, and thus by Proposition \ref{prop:integer_root}, $g$ is strongly Deligne.
\end{proof}

\begin{rem} 
The conclusion of Proposition~\ref{prop:dim_lower_0} technically holds for $r = 1$ as well, since nonconstant univariate polynomials are necessarily Deligne; however, this case is not useful for our purposes.
\end{rem}

\subsection{Proof of Theorem \ref{dimlowthm}} We now proceed with the more elaborate of our two dimension-lowering arguments, in which we cannot exploit the existence of an integer root. Throughout this section, we fix $\l\geq 2$ and a polynomial $h\in \Z[x_1,\dots,x_{\l}]$ satisfying all hypotheses of Theorem \ref{dimlowthm}, and we recall that $k=\deg(h)$ and $r$ denotes the rank of $h^k$. Note that the hypotheses of Theorem~\ref{dimlowthm} imply that $r \ge 2$. We assume without loss of generality that $h(\bszero)\neq 0$, which is permissible because $h(\Z^{\l})$ is invariant under input translation. We let $V\subseteq \bbA^{\l}$ and $\Vhat \subseteq \bbP^{\l-1}$ denote the varieties defined by $h=0$ and $h^k=0$, respectively. The following crucial lemma says that we can eliminate the singularity in the top-degree part of $h$, one dimension at a time, while maintaining the existence of nonsingular $\F_p$-points.

\begin{lemma}\label{lem:lin_poly}
Suppose $\ell \ge 3$. Then there exists a homogeneous linear polynomial $l \in \Z[x_1,\ldots,x_{\l}]$, monic in $x_{\l}$, for which the following holds: Letting $\Lhat$ and $L$ denote the hyperplanes in $\bbP^{\l - 1}$ and $\bbA^\l$, respectively, defined by $l = 0$, we have
	\begin{enumerate}[(i)]
	\item $\dim (\Vhat \cap \Lhat)^\sing < \dim \Vhat^\sing$, if $\Vhat^\sing\neq \emptyset$; and
	\item For sufficiently large $p$, $V^\ns(\F_p)\neq \emptyset$ implies $(V \cap L)^\ns(\F_p) \ne\emptyset$.
	\end{enumerate}
\end{lemma}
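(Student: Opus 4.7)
My plan is to apply Bertini's theorem (Theorem~\ref{thm:bertini}) twice---once projectively to secure (i), and once affinely (with hyperplanes through the origin) to secure (ii)---and then to find a hyperplane in the common open set which additionally has integer coefficients and is monic in $x_\ell$.

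For (i), I will apply Theorem~\ref{thm:bertini} to $\Vhat \subset \bbP^{\ell-1}$ with $\calL_1 = (\bbP^{\ell-1})^*$ (case 1 of the Remark following Theorem~\ref{thm:bertini}). This produces a nonempty Zariski open $U_1 \subset (\bbP^{\ell-1})^*$ such that for every $\Lhat \in U_1$, property (b) gives $\dim(\Vhat \cap \Lhat)^\sing < \dim \Vhat^\sing$ whenever $\Vhat^\sing \ne \emptyset$. For (ii), I will apply Theorem~\ref{thm:bertini} instead to $V \subset \bbA^\ell$, using $h(\bszero) \ne 0$ (by the standing assumption of the section), with $\calL_2$ the linear system of hyperplanes through $\bszero$ (case 2 of the Remark). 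Since $\dim V = \ell - 1 \ge 2$ under the hypothesis $\ell \ge 3$, this produces a nonempty Zariski open $U_2 \subset \calL_2 \cong (\bbP^{\ell-1})^*$ such that for every $L \in U_2$, each geometric component $Z$ of $V$ meets $L$ in a nonempty, geometrically irreducible subvariety.

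Next I will exhibit the desired $l$ inside $U := U_1 \cap U_2$. The locus of hyperplanes with vanishing $x_\ell$-coefficient is a proper closed subset of $(\bbP^{\ell-1})^*$, so $U$ meets the standard affine chart of hyperplanes monic in $x_\ell$ in a nonempty Zariski open subset of $\bbA^{\ell-1}$, and Zariski density of $\Z^{\ell-1}$ in $\bbA^{\ell-1}$ then lets me pick $(a_1,\ldots,a_{\ell-1}) \in \Z^{\ell-1}$ so that $l := a_1 x_1 + \cdots + a_{\ell-1} x_{\ell-1} + x_\ell$ defines a hyperplane in $U$. Property (i) is then immediate from $\Lhat \in U_1$. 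For (ii), suppose $V^\ns(\F_p) \ne \emptyset$ for a sufficiently large prime $p$. Since $r \ge 2$ forces $h^k$, and hence $h$, to be squarefree (by the Remark following Theorem~\ref{dimlowthm}), $V$ is reduced, so Lemma~\ref{lem:equiv} supplies a geometric component $Z$ of $V$ defined over $\F_p$. Because $l \in \Z[x_1,\ldots,x_\ell]$ and $L \in U_2$, the intersection $Z \cap L$ is geometrically irreducible, defined over $\F_p$, and of dimension $\ell - 2 \ge 1$. Lang--Weil (Lemma~\ref{langweillem}) then gives
\[
|(Z \cap L)^\ns(\F_p)| = p^{\ell - 2} + O_h(p^{\ell - 5/2}),
\]
while each intersection $Z \cap Z' \cap L$ with another component has dimension at most $\ell - 3$ and contributes $O_h(p^{\ell - 3})$ points. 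For $p$ large, the leading term dominates, yielding a point that is nonsingular on $V \cap L$. (Equivalently, after noting that generic hyperplane sections of a reduced variety in characteristic zero remain reduced, one may simply apply Lemma~\ref{lem:equiv} to $V \cap L$.)

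The main obstacle I anticipate is the simultaneous bookkeeping: forcing a single hyperplane to lie in both Bertini open sets $U_1$ (projective) and $U_2$ (affine-through-origin), while also respecting the monic-in-$x_\ell$ normalization over $\Z$. Once that compatibility is arranged, (i) is essentially immediate from Bertini, and (ii) reduces to a standard Lang--Weil count combined with Lemma~\ref{lem:equiv}.
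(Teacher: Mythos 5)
Your proposal is correct and follows essentially the same route as the paper: apply the Bertini statement (Theorem~\ref{thm:bertini}) in both the projective setting and the affine-through-origin setting (using $h(\bszero)\neq 0$), pick an integer hyperplane monic in $x_\ell$ from the resulting open set by Zariski density, deduce (i) from Bertini, and deduce (ii) from Lemma~\ref{lem:equiv} together with the Lang--Weil count. The only cosmetic difference is that the paper phrases the point-count for (ii) by invoking Lemma~\ref{lem:equiv} directly on $V\cap L$ (whose components are the irreducible $V_i\cap L$), whereas you unwind that count explicitly before noting the same shortcut.
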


\begin{proof}
Let $\calLhat$ and $\calL$ denote the linear systems of {\it hyperplanes in $\bbP^{\ell-1}$} and {\it hyperplanes in $\bbA^{\ell}$ passing through $\bszero$}, respectively. We identify each of $\calLhat$ and $\calL$ with $\bbP^{\ell-1}$, with the point $\bsa = (a_1 : \cdots : a_\ell) \in \bbP^{\ell-1}$ corresponding to the hyperplanes $\{a_1x_1 + \cdots + a_\ell x_\ell = 0\}$ in $\bbP^{\ell - 1}$ and $\bbA^\ell$, respectively.

\noindent 
The hypotheses of Theorem~\ref{thm:bertini} are satisfied by $\Vhat$ and $\calLhat$ (resp., $V$ and $\calL$), as explained in case 1 (resp., case 2) of the remark immediately following the theorem. Thus, there is a nonempty open set $U \subseteq \bbP^{\ell-1}$ such that for all $\bsa = (a_1 : \cdots : a_\ell) \in U$, the hyperplanes $\Lhat_\bsa \subset \bbP^{\ell - 1}$ and $L_\bsa \subset \bbA^\ell$ defined by $l_\bsa := a_1x_1 + \cdots + a_\ell x_\ell = 0$ satisfy the conclusion of Theorem~\ref{thm:bertini} (intersected with $\Vhat \subset \bbP^{\ell - 1}$ and $V \subset \bbA^\ell$, respectively). Similar to the proof of Proposition~\ref{prop:dim_lower_0}, we may choose $\bsa \in U$ of the form $\bsa = (a_1 : \cdots : a_{\ell - 1} : 1)$ with $a_1,\ldots,a_{\ell-1} \in \Z$. Set $l := l_\bsa$ for such a choice of $\bsa \in U$, hence also $\Lhat = \Lhat_\bsa$ and $L = L_\bsa$. By construction, we immediately have that $l \in \Z[x_1,\ldots,x_\ell]$, $l$ is monic in $x_\ell$, and property (i) holds, so it remains only to show that (ii) holds.

\noindent Let $V_1,\ldots,V_m$ be the geometrically irreducible components of $V$. Since $\dim V = \l - 1 \ge 2$, our choice of $\bsa$ guarantees that the geometrically irreducible components of $V \cap L$ are $V_i \cap L$ with $1 \le i \le m$. (We are again using Theorem~\ref{thm:bertini} and case 2 of the remark that follows.) By Lemma~\ref{lem:equiv}, if $p$ is sufficiently large, then $V^\ns(\F_p) \ne\emptyset$ implies that $V_i$ is defined over $\Z_p$ for some $1 \le i \le m$. Since $L$ is defined over $\Z$, hence over $\Z_p$, the intersection $V_i \cap L$ is also defined over $\Z_p$. Appealing to Lemma~\ref{lem:equiv} once more implies that $(V \cap L)^\ns(\F_p)$ is nonempty.
\end{proof}

The hyperplane produced by Lemma \ref{lem:lin_poly} quickly yields a suitable polynomial with one fewer variable.

\begin{corollary}\label{cor:one_step}
Suppose $\l \ge 3$. Then there exists $g' \in \Z[x_1,\ldots,x_{\l-1}]$ with $\deg(g')=k$  such that
	\begin{enumerate}[(i)]
	\item $g'(\bszero) \ne 0$;
	\item $g'(\Z^{\l-1}) \subseteq h(\Z^\l)$; 
	\item $\dim (\What')^\sing < \dim \Vhat^\sing$, if $\Vhat^\sing \neq \emptyset$; and
	\item for sufficiently large $p$, $V^\ns(\F_p)\neq \emptyset$ implies $(W')^\ns(\F_p) \ne\emptyset$;
	\end{enumerate}
where $\What' \subset \bbP^{\l-2}$ and $W' \subset \bbA^{\l-1}$ are the varieties defined by $(g')^k = 0$ and $g' = 0$, respectively.
\end{corollary}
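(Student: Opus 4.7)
The plan is to invoke Lemma~\ref{lem:lin_poly} to produce a linear form $l = a_1x_1 + \cdots + a_{\l-1}x_{\l-1} + x_{\l}$ with $a_i \in \Z$ satisfying conclusions (i) and (ii) of that lemma, and then exploit the fact that $l$ is monic in $x_{\l}$ to parametrize the hyperplane $L \subset \bbA^{\l}$ by solving $x_{\l} = -(a_1x_1 + \cdots + a_{\l-1}x_{\l-1})$. Define
\[
g'(x_1,\ldots,x_{\l-1}) := h\bigl(x_1,\ldots,x_{\l-1},-(a_1x_1 + \cdots + a_{\l-1}x_{\l-1})\bigr) \in \Z[x_1,\ldots,x_{\l-1}].
\]
The substitution is a linear isomorphism $\bbA^{\l-1} \xrightarrow{\sim} L$ (respectively, $\bbP^{\l-2} \xrightarrow{\sim} \Lhat$), under which the hypersurfaces $W' \subset \bbA^{\l-1}$ and $\What' \subset \bbP^{\l-2}$ are identified with $V \cap L$ and $\Vhat \cap \Lhat$, respectively.

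With this identification in hand, each of (i)--(iv) is essentially a direct translation. For (i), plugging in zero yields $g'(\bszero) = h(\bszero) \ne 0$. For (ii), any $\bsn \in \Z^{\l-1}$ is sent to an integer point on $L$, and $g'(\bsn)$ is by construction the value of $h$ at this integer point. For (iii), the isomorphism of projective varieties identifies $(\What')^\sing$ with $(\Vhat \cap \Lhat)^\sing$, and conclusion (i) of Lemma~\ref{lem:lin_poly} gives the required drop in dimension. For (iv), the isomorphism of affine varieties is defined over $\Z$, hence over $\F_p$, so $(W')^\ns(\F_p) \cong (V \cap L)^\ns(\F_p)$, and conclusion (ii) of Lemma~\ref{lem:lin_poly} finishes the job for sufficiently large $p$.

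The only subtle point---though not really an obstacle---is verifying that $\deg(g') = k$; that is, that the top-degree part does not cancel. The degree-$k$ homogeneous part of $g'$ is $h^k$ evaluated with the same linear substitution, and this polynomial is identically zero precisely when $l$ divides $h^k$, i.e., when $\Lhat \subseteq \Vhat$. But $\Lhat$ was chosen from a nonempty Zariski open subset of $(\bbP^{\l-1})^*$ via Lemma~\ref{lem:lin_poly}, and hyperplanes contained in the fixed hypersurface $\Vhat$ form a proper closed condition, so shrinking the open set in Lemma~\ref{lem:lin_poly} by removing this closed condition (if necessary) ensures $\Lhat \not\subseteq \Vhat$ and therefore $\deg(g') = k$.
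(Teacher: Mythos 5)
Your proposal is correct and follows essentially the same route as the paper: invoke Lemma~\ref{lem:lin_poly}, substitute $x_{\l} = -(a_1x_1+\cdots+a_{\l-1}x_{\l-1})$ to define $g'$, get (i) and (ii) from homogeneity of the substitution and integrality of the $a_i$, and deduce (iii) and (iv) from the identifications $\What' \cong \Vhat\cap\Lhat$ and $W' \cong V\cap L$. Your extra check that $\deg(g')=k$ (shrinking the open set to avoid the finitely many hyperplanes contained in $\Vhat$) is a valid point that the paper leaves implicit.
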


\begin{proof}
Let $L = L_\bsa$ be as in Lemma~\ref{lem:lin_poly}, and write $l = l_\bsa = a_1x_1 + \cdots + a_{\l-1}x_{\l-1} + x_{\l}$. To ease notation, we also set $\mu = \mu_\bsa := -(a_1x_1 + \cdots + a_{\l-1}x_{\l-1})$. Now, define
	\[
		g'(x_1,\ldots,x_{\l-1}) := h(x_1,\ldots,x_{\l-1},\mu(x_1,\ldots,x_{\l-1})).
	\]
Clearly $g'(\Z^{\l-1}) \subseteq h(\Z^{\l})$ and, since $\mu$ is homogeneous, $g'(\bszero) = h(\bszero) \ne 0$. 
Finally, since $V \cap L \cong W'$ and $\Vhat \cap \Lhat \cong \What'$, properties (iii) and (iv) follow immediately from Lemma~\ref{lem:lin_poly}.
\end{proof}

Recall our assumption that the rank satisfies $r \ge 2$. Repeated application of Corollary~\ref{cor:one_step} yields the following:

\begin{corollary}\label{cor:all_steps}
There exists $g \in \Z[x_1,\ldots,x_{r}]$ with $\deg(g)=k$ such that
	\begin{enumerate}[(i)]
	\item $g(\Z^r) \subseteq h(\Z^{\l})$;
	\item $g$ is Deligne; and
	\item for sufficiently large $p$, $V^\ns(\F_p)\neq \emptyset$ implies $W^\ns(\F_p) \ne\emptyset$;
	\end{enumerate}
where $W \subseteq \bbA^r$ is the variety defined by $g = 0$.
\end{corollary}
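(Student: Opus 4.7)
The plan is to apply Corollary~\ref{cor:one_step} iteratively, producing a chain $h = g_0, g_1, \ldots, g_{\l - r} =: g$, where each $g_i \in \Z[x_1,\ldots,x_{\l-i}]$ of degree $k$ is obtained from $g_{i-1}$ by one application of the corollary. Since $r \ge 2$, at the $i$-th step (for $1 \le i \le \l - r$) the polynomial $g_{i-1}$ depends on $\l - i + 1 \ge r + 1 \ge 3$ variables, so the hypothesis of Corollary~\ref{cor:one_step} is satisfied throughout the iteration, and property (i) of that corollary guarantees $g_i(\bszero) \ne 0$ as required to continue.

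Writing $\What_i \subseteq \bbP^{\l - i - 1}$ for the variety defined by $g_i^k = 0$ (so $\What_0 = \Vhat$), property (iii) of Corollary~\ref{cor:one_step} gives $\dim \What_i^\sing < \dim \What_{i-1}^\sing$ whenever $\What_{i-1}^\sing \ne \emptyset$. Once the singular locus becomes empty, the Bertini-based construction underlying Lemma~\ref{lem:lin_poly} ensures it remains empty, since for a smooth variety a generic hyperplane section is again smooth. Starting from $\dim \Vhat^\sing = (\l - 1) - r$ (with the convention $\dim \emptyset = -1$), after $\l - r$ iterations we conclude $\dim \What_{\l - r}^\sing \le -1$, i.e., $\What_{\l - r}^\sing = \emptyset$. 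Combined with $\deg(g) = k$, this is precisely the statement that $g^k$ defines a smooth hypersurface in $\bbP^{r - 1}$, which is the Deligne condition, yielding (ii).

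Properties (i) and (iii) then follow by transitivity. Chaining the image containments $g_i(\Z^{\l - i}) \subseteq g_{i - 1}(\Z^{\l - i + 1})$ from property (ii) of Corollary~\ref{cor:one_step} through the iteration gives $g(\Z^r) \subseteq h(\Z^\l)$. Similarly, each step provides a prime bound $B_i$ beyond which a nonsingular $\F_p$-point on the preceding affine variety transfers to one on its successor, and choosing $p \ge \max\{B_1,\ldots,B_{\l - r}\}$ yields the full implication $V^\ns(\F_p) \ne \emptyset \Longrightarrow W^\ns(\F_p) \ne \emptyset$. The geometric heavy lifting has been done in Lemma~\ref{lem:lin_poly} and Corollary~\ref{cor:one_step}; what remains is essentially bookkeeping, the main subtlety being to confirm that the iteration terminates with exactly $r$ variables without running out of variables prematurely, which is precisely what the hypothesis $r \ge 2$ ensures.
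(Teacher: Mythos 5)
Your proposal is correct and follows essentially the same route as the paper: apply Corollary~\ref{cor:one_step} recursively $(\l-r)$ times, with $r\ge 2$ guaranteeing at least three variables (and nonvanishing at the origin) at every step, and the degenerate case $\l-r=0$ (e.g.\ $\l=2$) handled by taking $g=h$. Your explicit dimension count for the singular locus, including the observation via Bertini that smoothness persists once achieved, is a correct elaboration of what the paper dismisses as immediate.
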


\begin{proof}
When $\l \ge 3$, this follows immediately by applying Corollary~\ref{cor:one_step} recursively $(\l - r)$ times. The fact that $r \ge 2$ ensures that at each step we are applying Corollary~\ref{cor:one_step} to a polynomial in at least $3$ variables.

\noindent When $\l = 2$, the statement is trivial, since $r = 2$ implies that $h$ is already Deligne, so we can take $g = h$.
\end{proof}

\begin{rem}
Using the construction from the proof of Corollary~\ref{cor:one_step}, the polynomial $g$ of Corollary~\ref{cor:all_steps} may be written in the form
	\[
		g(x_1,\ldots,x_r) = h(x_1,\ldots,x_r,\mu_{r+1}(x_1,\ldots,x_r), \ldots, \mu_{\l}(x_1,\ldots,x_r)),
	\]
where each $\mu_j$ is a homogeneous linear polynomial. We will use this precise form in our proof of Theorem~\ref{dimlowthm}, which we are now ready to begin.
\end{rem}

\begin{proof}[Proof of Theorem \ref{dimlowthm}]

\noindent Let $g \in \Z[x_1,\ldots,x_r]$ be as in Corollary~\ref{cor:all_steps}, and let $W \subseteq \bbA^r$ be the variety defined by $g = 0$. Throughout this proof we use the notation $\tilde{\bsx}=(x_1,\dots,x_r)$ and $\bsx=(x_1,\dots,x_{\l})$.

\noindent As mentioned in the remark above, $g$ may be given by $g(\tilde{\bsx})=h(M\tilde{\bsx})$, where $$M(x_1,\dots,x_r)=(x_1,\dots,x_r,\mu_{r+1}(x_1,\dots,x_r), \dots, \mu_{\l}(x_1,\dots,x_r)) $$ for linear forms $\mu_{r+1},\dots,\mu_{\l}$. Note that $g$ and the linear forms have been constructed once and for all from $h$, so any quantities depending on them implicitly depend only on $h$. 

\noindent Let $X = X(h)$ be the set of primes $p$ for which 
	\begin{itemize} 
	\item $p\mid k$;
	\item $g^k$ is not smooth modulo $p$; or
	\item $W^\ns(\F_p) = \emptyset$ and $m_p\neq k$ for all $\bsz_p\in V(\Z_p)$. 
	\end{itemize}
 The first item clearly defines a finite set, the second item defines a finite set because $g$ is Deligne (see Definition~\ref{defn:smoothDeligne}). If $r\geq 3$, then the third item defines a finite set by Lemma \ref{lem:equiv} and the fact that Deligne polynomials in $r\geq 3$ variables are geometrically irreducible, as seen in the proof of Corollary \ref{3var}. If $r=2$, then item (iii) of Corollary \ref{cor:all_steps}, Lemma \ref{lem:equiv}, and the hypotheses of Theorem \ref{dimlowthm} ensure that the third item defines a finite set. Thus, $X$ is finite.
 
\noindent In order to construct auxiliary polynomials $h_d$ for $d \in N$, we first choose $\Z_p$-roots of $h$ as follows: If $p \in X$, then choose a point $\bsz_p \in V(\Z_p)$ arbitrarily; such points exist because $h$ is intersective. For $p \notin X$ with $W^\ns(\F_p)\neq \emptyset$, choose $\tilde{\bsz}_p \in W(\Z_p)$ to be a Hensel lift of a nonsingular point on $W(\F_p)$, then set $\bsz_p=M\tilde{\bsz}_p\in V(\Z_p).$ Finally, for all remaining $p\notin X$, fix $\bsz_p \in V(\Z_p)$ with $m_p=k$. 

\noindent For each prime $p$, by definition of multiplicity, we have a decomposition of the form \begin{equation}\label{hdecomp} h(\bsx+\bsz_p)=\sum_{m_p\leq |\bsi| \leq k} b_{\bsi}\bsx^{\bsi} \end{equation} for $b_{\bsi}\in \Z_p$. However, the substitution $\bsx=M\tilde{\bsx}$ could cause some homogeneous parts to identically vanish, so we define $\tilde{m}_p$ to be the multiplicity of $\bszero$ as a root of $h(M\tilde{\bsx}+\bsz_p)$, so in particular \begin{equation}h(M\tilde{\bsx}+\bsz_p)=\sum_{m_p\leq |\bsi| \leq k} b_{\bsi}(M\tilde{\bsx})^{\bsi}=\sum_{\tilde{m}_p\leq |\bsi|\leq k} a_{\bsi} \tilde{\bsx}^{\bsi},
\end{equation} where $a_{\bsi}\neq 0$ for some $\bsi$ with $|\bsi|=\tilde{m}_p$. We quickly note that $\tilde{m}_p=m_p$ for all $p\notin X$. If $p\notin X$ with $m_p=k$, the degree-$k$ part of $h(M\tilde{\bsx}+\bsz_p)$ is the same as the degree-$k$ part of $g$. If $p\notin X$ and $\bsz_p=M\tilde{\bsz}_p$ as above, then $h(M\tilde{\bsx}+\bsz_p)$ is precisely $g(\tilde{\bsx}+\tilde{\bsz}_p)$, and in particular the linear part does not vanish modulo $p$.

\noindent To account for this possible increase in multiplicity for primes $p\in X$, we define a completely multiplicative function $\tilde{\lambda}(d)$ by setting $\lambda(p)=p^{\tilde{m}_p}$ for all primes $p$. We define $\{\bsr_d\}_{d\in \N}$ from $\{\bsz_p\}_{p\in\P}$ as usual from the Chinese remainder theorem, then define the slightly modified auxiliary polynomials $\{\tilde{h}_d\}_{d\in \N}$ by $$\tilde{h}_d(\bsx)=h(\bsr_d+d\bsx)/\tilde{\lambda}(d).$$ We note that $\tilde{h}_d$ can potentially have non-integer coefficients, with denominators divisible by primes in $X$. However, the analog of Proposition \ref{inh}, and the deduction of Lemma \ref{mainit} from Lemma \ref{L2I} and Proposition \ref{inh}, still hold because $d\mid \tilde{\lambda}(d)$ and $\tilde{\lambda}$ is completely multiplicative. 

\noindent We now let $d'=\prod_{p\mid d} p^{(\tilde{m}_p-m_p+1)\text{ord}_p(d)}\leq d^{k},$ and we define $$g_d(\tilde{\bsx})=\tilde{h}_d(\bss_d+M\tilde{\bsx})=h(\bsr_{d'}+dM\tilde{\bsx})/\tilde{\lambda}(d),$$ where $\bss_d$ satisfies $\bsr_{d'}=\bsr_d+d\bss_d$. We will establish the following properties of $g_d$: 

\begin{enumerate}[(i)]
\item $g_d(\Z^r)\subseteq \tilde{h}_d(\Z^{\l})$,

\item $g_d$ has integer coefficients,  

\item $g_d$ is Deligne modulo $p$ for all $p\notin X$,

\item The coefficients of $g_d$ are of size $O_h(d^{k^2})$,

\item $\text{cont}(g_d)\ll_h 1$.

\end{enumerate} 

\noindent Unlike Proposition \ref{prop:dim_lower_0}, these efforts cannot be applied ``externally'' to immediately yield Theorem \ref{dimlowthm} because the family $\{g_d\}_{d\in \N}$ is not necessarily the set of auxiliary polynomials of a single intersective polynomial. However, the enumerated properties of this family make it perfectly suited for us to apply our efforts ``internally'', using the estimates enumerated in Theorem \ref{standalonethm}, as follows: 

\begin{itemize} \item[(1)] Replace all occurrences of $h_d$ in the proof of Theorem \ref{more} with $\tilde{h}_d$. The fact that $\tilde{h}_d$ potentially has non-integer coefficients is not a problem, as the analog of Proposition \ref{inh} still holds, and as explained in the next step.

\item[(2)] When proving Lemma \ref{L2I} (the only piece of the proof of Theorem \ref{more} that requires integer coefficients or a nonsingularity condition), use that $(A-A)\cap g_d(\Z^r) \subseteq (A-A)\cap \tilde{h}_d(\Z^{\l}) \subseteq\{0\},$ then do the remainder of the proof with $h_d$ replaced by $g_d$. For this purpose, properties (ii)-(v) above assure that $g_d$ functions as if it were the auxiliary polynomial of a strongly Deligne polynomial in $r$ variables. In particular, the conclusion of Lemma \ref{L2I} holds with $\theta(k,\l,\delta)$ replaced by $\theta(k,r,\delta)$.

\item[(3)] The remainder of the argument is identical, and Theorem \ref{dimlowthm} follows. 
\end{itemize}

\noindent Our task is now reduced to verifying properties (i)-(v). Properties (i) and (iv) are immediate from the definition of $g_d$ and $\tilde{h}_d$. We next simultaneously establish (ii) and the property 
\begin{equation}\label{ordbound} \text{ord}_p(\text{cont}(g_d))\ll_{h,p} 1  \text{ for all } p\in \P. \end{equation}
When we later establish (iii), it immediately combines with (\ref{ordbound}) to yield (v), because $p\nmid \text{cont}(g_d)$ if $g_d$ is Deligne modulo $p$. We fix $p\in \P$ and set $j=\text{ord}_p(d)$. By (\ref{hdecomp}), we have  
\begin{equation}\label{longeq} g_d(\tilde{\bsx})=\tilde{h}_d(\bss_d+M\tilde{\bsx}) =\frac{1}{\tilde{\lambda}(d)}h(\bsr_{d'}+dM\tilde{\bsx}) =\frac{1}{\tilde{\lambda}(d)}\sum_{m_p\leq |i| \leq k}b_{\bsi}(dM\tilde{\bsx}+\bsr_{d'}-\bsz_p)^{\bsi}. 
\end{equation}
 Since $p^j\mid d$ and $p^{(\tilde{m}_p-m_p+1)j}$ divides all coordinates of $\bsr_{d'}-\bsz_p$, all terms in the summation apart from \begin{equation}\label{vanish} \sum_{m_p\leq|\bsi|\leq \tilde{m}_p-1}b_{\bsi}(dM\tilde{\bsx})^{\bsi} \end{equation} have coefficients divisible by $p^{j\tilde{m}_p}$, and the polynomial (\ref{vanish}) identically vanishes by definition of $\tilde{m}_p$.  Since $\text{ord}_p(\tilde{\lambda}(d))=j\tilde{m}_p$, all coefficients of $g_d$ have nonnegative $p$-adic valuation. Since $p\in \P$ was arbitrary, it follows that $g_d$ has integer coefficients. 

\noindent Further,  we see in (\ref{longeq}) that all degree-$\tilde{m}_p$ terms have a factor of $p^j$ apart from those arising from  $$\frac{d^{\tilde{m}_p}}{\tilde{\lambda}(d)}\sum_{|\bsi|=\tilde{m}_p} b_{\bsi} (M\tilde{\bsx})^{\bsi}= \frac{d^{\tilde{m}_p}}{\tilde{\lambda}(d)}\sum_{|\bsi|=\tilde{m}_p} a_{\bsi} \tilde{\bsx}^{\bsi}, $$
where $a_{\bsi}\neq 0$ for some $\bsi$ with $|\bsi|=\tilde{m}_p$. 

\noindent Since $p\nmid (d^{\tilde{m}_p}/\tilde{\lambda}(d))$, we have that $$\text{ord}_p(\text{cont}(g_d))\leq v:=\min_{|\bsi|=\tilde{m}_p}\text{ord}_p(a_{\bsi}),$$ provided $j>v$. Alternatively, if $j\leq v $, then $\text{ord}_p(\text{cont}(g_d))$ is at most $kv$ plus the minimum $p$-adic valuation of the degree-$k$ coefficients of $g$, which establishes (\ref{ordbound}).

\noindent Our task is now reduced to verifying property (iii), for which we fix $p\notin X$, and proceed similarly to the proof of Proposition \ref{thm:main}. Since $g_d^k$ is precisely $\frac{d^k}{\tilde{\lambda}(d)}g^k$, we know that if $p\nmid d$ or $m_p=k$, then $g_d^k$ modulo $p$ is a nonzero multiple of $g^k$, hence remains smooth. Therefore, $g_d$ is Deligne modulo $p$. 

\noindent The remaining case is when $p\mid d$ and $\bsz_p=M\tilde{z}_p$, where $\tilde{z}_p\in W(\Z_p)$ is a Hensel lift of a nonsingular point of $W(\F_p)$, so in particular the linear part of $g(\tilde{\bsx}+\tilde{z}_p)=h(M\tilde{\bsx}+\bsz_p)$ does not identically vanish modulo $p$. 

\noindent Using (\ref{hdecomp}), letting $j=\text{ord}_p(d)$, we note that $\text{ord}_p(\tilde{\lambda}(d))=j$ and $p^j$ divides all coordinates of $ \bsr_{d'}-\bsz_p$, and we have $$g_d(\tilde{\bsx})=\frac{1}{\tilde{\lambda}(d)}\sum_{1\leq |\bsi|\leq k} b_{\bsi}(dM\tilde{\bsx}+\bsr_{d'}-\bsz_p)^{\bsi}=p^jf(\tilde{\bsx})+\frac{d}{\tilde{\lambda}(d)}\sum_{|\bsi|=1}b_{\bsi}(M\tilde{\bsx})^{\bsi} +C $$ for some $f\in \Z_p[x_1,\dots,x_r]$ and constant $C$. In particular, modulo $p$, the highest-degree part of $g_d$ is a nonzero multiple of the nonvanishing linear part of $g(\tilde{\bsx}+\tilde{\bsz}_p)$, hence $g_d$ is Deligne modulo $p$. All five properties of $g_d$ are now verified and the proof of Theorem \ref{dimlowthm} is complete.
 \end{proof}
  
\section{Exponential sum estimates} \label{expest} In this final section, we establish the exponential sum estimates claimed in Theorem \ref{standalonethm}, which we then use to deduce (\ref{SmajII}) and (\ref{SminII}). This effort consists primarily of careful multivariate adaptations of the tools used to prove Theorem 2.7 in \cite{ricemax}, but we begin with another foray into varieties over finite fields. 

\subsection{Control over gradient vanishing: Part II} \label{gv2} Since we are sieving away inputs at which the gradient of our polynomial vanishes, but then appealing to Theorem \ref{delmain}, which is a complete exponential sum estimate, it is important for us to have an upper bound on the number of points our sieve might be throwing away. With this in mind, we make the following definition.

\begin{definition} For a field $F$ and $g\in F[x_1,\dots,x_{\ell}]$ we define the \textit{gradient locus} of $g$ to be the variety $$\mathcal{G}_g =\{\bsx\in \mathbb{A}^{\ell}: \grad g(\bsx)=\bszero\}\subseteq \mathbb{A}^{\ell}. $$

\end{definition}

The following proposition establishes firm control over the gradient locus of a Deligne polynomial. 

\begin{proposition}\label{gradcodim}
Suppose $F$ is a field, $\ell \in \N$, and $g \in F[x_1,\ldots,x_\ell]$ with $\deg(g)=k \ge 1$. If $g$ is Deligne, then $\calG_g=\emptyset$ or $\dim \calG_g = 0$.
\end{proposition}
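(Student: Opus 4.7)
The plan is to projectivize the gradient locus and use the smoothness hypothesis on $g^k$ to rule out points at infinity, then invoke the fact that a positive-dimensional projective variety in $\mathbb{P}^\ell$ must meet every hyperplane.

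Concretely, let $P_i(x_0, x_1, \ldots, x_\ell) \in F[x_0, \ldots, x_\ell]$ be the homogenization of $\partial g/\partial x_i$ with respect to an extra variable $x_0$, so each $P_i$ is homogeneous of degree $k-1$. Let
\[
Z := V(P_1, \ldots, P_\ell) \subseteq \mathbb{P}^\ell,
\]
and identify $\mathbb{A}^\ell$ with the open set $\{x_0 \neq 0\}$. By construction, $Z \cap \mathbb{A}^\ell = \mathcal{G}_g$. The first step is to analyze $Z \cap \{x_0 = 0\}$: these points are common zeros in $\mathbb{P}^{\ell-1}$ of the forms obtained by setting $x_0 = 0$ in each $P_i$, which are exactly the degree-$(k-1)$ homogeneous polynomials $\partial g^k/\partial x_i$.

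The second step is to show $Z \cap \{x_0 = 0\} = \emptyset$ using the Deligne hypothesis. Since $g$ is Deligne, the characteristic of $F$ does not divide $k$, so Euler's identity $k \cdot g^k = \sum_{i=1}^\ell x_i \, \partial g^k/\partial x_i$ holds. If $[0 : a_1 : \cdots : a_\ell] \in \mathbb{P}^\ell$ is a common zero of the $\partial g^k/\partial x_i$, then by Euler $g^k(a_1,\ldots,a_\ell) = 0$ as well, so $(a_1,\ldots,a_\ell) \in \bar{F}^\ell \setminus \{\mathbf{0}\}$ is a nontrivial common zero of $g^k$ and all its partials, contradicting the smoothness of $g^k$. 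Hence $Z$ is entirely contained in the affine chart $\mathbb{A}^\ell$, i.e., $\mathcal{G}_g = Z$ as closed subvarieties of $\mathbb{P}^\ell$.

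Finally, $Z$ is a closed subvariety of the projective space $\mathbb{P}^\ell$ and so is itself projective. If some irreducible component $W \subseteq Z$ had $\dim W \geq 1$, then $W$ would meet the hyperplane $\{x_0 = 0\}$ (any positive-dimensional projective variety meets every hyperplane, by the standard dimension-count), contradicting what we just established. Therefore every component of $Z = \mathcal{G}_g$ has dimension $0$, giving the claim. The only genuinely delicate point is the bookkeeping in step two: one must be careful that the homogenization of $\partial g/\partial x_i$ specializes at $x_0 = 0$ precisely to $\partial g^k/\partial x_i$ (which uses that $\partial g^k/\partial x_i$ is the degree-$(k-1)$ part of $\partial g/\partial x_i$), and that Euler's identity is available, which is exactly why the Deligne definition requires $\mathrm{char}(F) \nmid k$.
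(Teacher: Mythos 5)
Your proof is correct and follows essentially the same route as the paper's: projectivize the gradient locus, use Euler's identity (valid because $\mathrm{char}(F)\nmid k$) together with smoothness of $g^k$ to show it misses the hyperplane at infinity, and conclude it is zero-dimensional since a projective variety contained in an affine chart is finite. The only cosmetic difference is that you treat the homogeneous and non-homogeneous cases uniformly (homogenizing the partials to degree $k-1$ directly), whereas the paper homogenizes $g$ itself, argues via the Jacobian of $\{G=0\}\cap\{x_0=0\}$, and handles the homogeneous case separately.
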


\begin{proof}
First, assume $g$ is not homogeneous. Let $G(x_0,x_1,\ldots,x_\ell)$ be the homogenization of $g$. Thus, we have
	\[
		g(x_1,\ldots,x_\ell) = G(1,x_1,\ldots,x_\ell) \quad\text{and}\quad g^k(x_1,\ldots,x_\ell) = G(0,x_1,\ldots,x_\ell).
	\]
The variety
	\[
		\What := \{G = 0\} \cap \{x_0 = 0\} \subset \bbP^\ell
	\]
is isomorphic to $\{g^k = 0\}$, hence is nonsingular since $g$ is Deligne. Thus, the Jacobian matrix
	\[
		\begin{pmatrix}
		\dfrac{\partial G}{\partial x_0} & \dfrac{\partial G}{\partial x_1} & \cdots & \dfrac{\partial G}{\partial x_\ell}\\
		\\
		1 & 0 & \cdots & 0
		\end{pmatrix}
	\]
has rank $2$ at every point on $\What$. In other words, the system
	\begin{align*}
		G=x_0=\frac{\partial G}{\partial x_1} = \cdots = \frac{\partial G}{\partial x_\ell} &= 0
	\end{align*}
has no solutions in $\bbP^\ell$. The equation $G = 0$ is actually superfluous here; by Euler's theorem on homogeneous functions, we have
	\[
		kG(x_0,x_1,\ldots,x_\ell) = x_0\dfrac{\partial G}{\partial x_0} + x_1\dfrac{\partial G}{\partial x_1} + \cdots + x_\ell\dfrac{\partial G}{\partial x_\ell},
	\]
so the vanishing of $x_0$ and the $x_1$- through $x_\ell$-partials would guarantee the vanishing of $G$. Here we use the fact that the characteristic of $F$ does not divide $k$, as included in the definition of the Deligne property. It follows that the system
	\begin{align*} 
		x_0=\frac{\partial G}{\partial x_1} = \cdots = \frac{\partial G}{\partial x_\ell} &= 0
	\end{align*}
has no solutions in $\bbP^\ell$, so the subvariety of $\bbP^\ell$ defined by
	\begin{equation}\label{eq:gradient_vanishing}
		\frac{\partial G}{\partial x_1} = \cdots = \frac{\partial G}{\partial x_\ell} = 0 
	\end{equation}
is contained in $\{\bsx \in \bbP^\ell \mid x_0 \ne 0\} \cong \bbA^\ell$ and has dimension $0$. But, for $\boldsymbol{\alpha} = (\alpha_1,\ldots,\alpha_\ell) \in \bbA^\ell$, we have
	\[
	\frac{\partial g}{\partial x_i}(\boldsymbol{\alpha}) = \frac{\partial G}{\partial x_i}(1, \boldsymbol{\alpha})
	\]
for all $1 \le i \le \ell$. Thus, $\calG_g$ is (isomorphic to) the zero-dimensional subvariety of $\bbP^\ell$ given by \eqref{eq:gradient_vanishing}, concluding the proof in the case that $g$ is not homogeneous.

Finally, suppose $g$ is homogeneous. Again using Euler's theorem on homogeneous functions, we write
	\[
		kg(x_1,\ldots,x_\ell) = x_1\frac{\partial g}{\partial x_1} + \cdots + x_\ell\frac{\partial g}{\partial x_\ell}.
	\]
Thus, if all partials of $g$ vanish at $\bsx$, then $g(\bsx) = 0$ as well. By hypothesis, $g = g^k$ is smooth, so there are no common zeroes of $g, \frac{\partial g}{\partial x_1}, \ldots, \frac{\partial g}{\partial x_\ell}$ in $\bbP^\ell$, so in $\bbA^\ell$ the only possible common zero is the origin. Therefore, $\calG_g$ contains at most one point.
\end{proof}

Proposition \ref{gradcodim} combines with B\'ezout's Theorem (Lemma \ref{lem:bezout}) to yield the following estimate on the size of the gradient vanishing locus for a Deligne polynomial over a finite field, which yields Lemma \ref{gradconst} as a special case. 
  
\begin{corollary}\label{gradcor} If $\l \geq 1$ and $g\in \F_q[x_1,\dots,x_{\l}]$ is a Deligne polynomial of degree $k\geq 1$, then $|\mathcal{G}_g|$ is bounded by a constant depending only $k$ and $\ell$.
\end{corollary}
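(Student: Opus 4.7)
The plan is to combine Proposition \ref{gradcodim} with B\'ezout's Theorem (Lemma \ref{lem:bezout}) essentially directly. Proposition \ref{gradcodim} already does the structural work: it tells us that $\mathcal{G}_g$ is either empty or zero-dimensional, and moreover the proof describes $\mathcal{G}_g$ explicitly as a complete intersection of $\ell$ hypersurfaces defined by the vanishing of the partial derivatives of $g$, each of degree at most $k-1$. So all that remains is to bound the cardinality of this zero-dimensional complete intersection, which is exactly what B\'ezout provides.

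More concretely, I would split into the two cases treated in the proof of Proposition \ref{gradcodim}. If $g$ is homogeneous, that proof shows $\mathcal{G}_g \subseteq \{\bszero\}$, so $|\mathcal{G}_g|\leq 1$ and we are done. If $g$ is not homogeneous, let $G \in \F_q[x_0,x_1,\ldots,x_\ell]$ be its homogenization. The proof of Proposition \ref{gradcodim} establishes that the projective subvariety of $\bbP^\ell$ cut out by
\[
\frac{\partial G}{\partial x_1}=\cdots=\frac{\partial G}{\partial x_\ell}=0
\]
is zero-dimensional and lies entirely in the affine chart $\{x_0\neq 0\}$, so it is isomorphic to $\mathcal{G}_g$. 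Since each $\partial G/\partial x_i$ is homogeneous of degree at most $k-1$, Lemma \ref{lem:bezout} applied to these $\ell$ projective hypersurfaces (whose intersection is finite) yields
\[
|\mathcal{G}_g|\leq (k-1)^\ell,
\]
which depends only on $k$ and $\ell$.

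There is no real obstacle here; the only point requiring mild care is to invoke B\'ezout in projective space rather than affine space, which is legitimate precisely because Proposition \ref{gradcodim}'s proof rules out components at infinity. Lemma \ref{gradconst} stated earlier is then the special case of this corollary for homogeneous and general polynomials in the sieve context.
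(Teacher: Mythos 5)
Your proposal is correct and matches the paper's argument: the paper derives Corollary \ref{gradcor} exactly by combining Proposition \ref{gradcodim} (which identifies $\mathcal{G}_g$ with a zero-dimensional subvariety of $\bbP^\ell$ cut out by the partials of the homogenization, or reduces to $\mathcal{G}_g\subseteq\{\bszero\}$ in the homogeneous case) with B\'ezout's Theorem (Lemma \ref{lem:bezout}), yielding a bound such as $(k-1)^\ell$ depending only on $k$ and $\ell$. Your case split and the remark about invoking B\'ezout projectively, with no components at infinity, are precisely the intended reasoning.
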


\subsection{Major arc estimates} In this section we establish item (\ref{majitem}) of Theorem \ref{standalonethm}. Derivations of asymptotic formulas of this type typically rely on partial summation, so we begin with a multivariate version thereof, proven by induction from the usual formula.
 
\begin{lemma}[Multivariable Partial Summation] \label{mps}

Suppose $\ell\in \N$ and $a:\N^{\ell}\to \C$. Suppose further that $\psi: \R^{\ell}\to \C$ is $C^{\ell}$. For any $X>0$, we have \begin{align*}\sum_{\bsn \in [1,X]^{\ell}} a(\bsn)\psi(\bsn)&= A(X,\dots,X)\psi(X,\dots,X) \\ &\qquad +\sum_{i=1}^{\ell} (-1)^i\sum_{1\leq j_1<\cdots<j_i\leq \ell} \int_{[0,X]^{i}} A(\star)\frac{\partial^i \psi}{\partial x_{j_1}\cdots \partial x_{j_i}}(\star) \ dx_{j_1}\cdots dx_{j_i},\end{align*} where $$A(x_1,\dots,x_{\ell})=\sum_{\bsn \in [1,x_1]\times \cdots \times [1,x_{\ell}]} a(\bsn)$$ and $\star=(X,\dots, x_{j_1},\dots ,x_{j_{i}}, \dots, X),$ with $x_{j_1},\dots,x_{j_i}$ plugged into coordinate positions $j_1,\dots,j_i$ and all other coordinates evaluated at $X$.

\end{lemma}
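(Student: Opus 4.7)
The plan is to induct on $\ell$, with the base case $\ell=1$ being the classical one-dimensional Abel summation formula $\sum_{n=1}^{\lfloor X\rfloor} a(n)\psi(n)=A(X)\psi(X)-\int_0^X A(x)\psi'(x)\,dx$. It is convenient to reindex the statement: each tuple $1\leq j_1<\cdots<j_i\leq \ell$ corresponds bijectively to a subset $S=\{j_1,\dots,j_i\}\subseteq\{1,\dots,\ell\}$, and if we let $\star_S$ denote the point with coordinate $x_j$ in positions $j\in S$ and coordinate $X$ elsewhere, then the target formula reads
\begin{equation*}
\sum_{\bsn\in[1,X]^\ell}a(\bsn)\psi(\bsn)=\sum_{S\subseteq\{1,\dots,\ell\}}(-1)^{|S|}\int_{[0,X]^{|S|}}A(\star_S)\,\partial_S\psi(\star_S)\prod_{j\in S}dx_j,
\end{equation*}
where $\partial_S\psi=\partial^{|S|}\psi/\prod_{j\in S}\partial x_j$ and the $S=\emptyset$ term is interpreted as the boundary contribution $A(X,\dots,X)\psi(X,\dots,X)$.

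For the inductive step with $\ell\geq 2$, I would separate the outermost variable, writing
\begin{equation*}
\sum_{\bsn\in[1,X]^\ell}a(\bsn)\psi(\bsn)=\sum_{n_\ell=1}^{\lfloor X\rfloor}\sum_{\bsn'\in[1,X]^{\ell-1}}a(\bsn',n_\ell)\psi(\bsn',n_\ell).
\end{equation*}
For each fixed $n_\ell$, I apply the inductive hypothesis to the inner sum using the functions $\bsn'\mapsto a(\bsn',n_\ell)$ and $\bsx'\mapsto\psi(\bsx',n_\ell)$, producing a decomposition over subsets $S'\subseteq\{1,\dots,\ell-1\}$ involving the partial sums $A_{n_\ell}(x_1,\dots,x_{\ell-1}):=\sum_{\bsn'\in[1,x_1]\times\cdots\times[1,x_{\ell-1}]}a(\bsn',n_\ell)$. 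Swapping the finite sum over $n_\ell$ with the integrals (justified by continuity of the integrands), each $S'$ contribution reduces to an inner sum of the form $\sum_{n_\ell=1}^{\lfloor X\rfloor}A_{n_\ell}(\cdot)\,\partial_{S'}\psi(\cdot,n_\ell)$, to which I apply one-dimensional Abel summation in $n_\ell$. The crucial identity $\sum_{n_\ell\leq x_\ell}A_{n_\ell}(x_1,\dots,x_{\ell-1})=A(x_1,\dots,x_{\ell-1},x_\ell)$, obtained by swapping the order of summation in the definition of $A$, guarantees that the boundary and integral pieces produced by this second Abel summation are expressible directly in terms of $A$.

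After this step, each $S'$ contribution splits into two: a boundary piece with $x_\ell$ evaluated at $X$ (matching the target term for $S=S'$, where $\ell\notin S$) and an integral piece carrying an additional factor $-\int_0^X\partial/\partial x_\ell\,dx_\ell$ (matching the target term for $S=S'\cup\{\ell\}$, where $\ell\in S$). The extra minus sign turns $(-1)^{|S'|}$ into $(-1)^{|S'|+1}=(-1)^{|S|}$, while the $C^\ell$ hypothesis on $\psi$ justifies commuting partial derivatives so that $\partial_{S'}(\partial\psi/\partial x_\ell)=\partial_{S'\cup\{\ell\}}\psi$. Since every $S\subseteq\{1,\dots,\ell\}$ arises uniquely as $S'$ or $S'\cup\{\ell\}$ for a unique $S'\subseteq\{1,\dots,\ell-1\}$, summing the contributions over all such $S'$ reproduces the full sum over $S\subseteq\{1,\dots,\ell\}$ and closes the induction. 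The only real obstacle here is notational bookkeeping---verifying that the $\star$-evaluations, signs, and orderings of iterated partials match up after two nested applications of one-dimensional Abel summation---rather than any analytic difficulty.
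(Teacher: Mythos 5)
Your proposal is correct and follows essentially the same route as the paper: induct on $\ell$, split off the last variable, apply the inductive hypothesis for each fixed $n_\ell$, and then apply one-dimensional partial summation in $n_\ell$ to the boundary term and to each integral term, using that summing the fixed-$n_\ell$ partial sums over $n_\ell \leq x_\ell$ recovers $A$. Your subset-indexed bookkeeping (pairing $S'$ with $S'$ and $S' \cup \{\ell\}$) is just a cleaner notational packaging of the paper's count over tuples $j_1 < \cdots < j_i$.
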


\begin{proof} We induct on $\ell$. The base case $\ell=1$ is the usual partial summation formula $$\sum_{1\leq n \leq X} a(n)\psi(n)= A(X)\psi(X)-\int_0^X A(x)\psi'(x) \ dx. $$ Fix $\ell\geq 2$ and assume the formula holds for $\ell-1$. Defining some notation before proceeding, let  $$\tilde{A}(x_1,\dots,x_{\ell-1},n_{\ell})=\sum_{\bsn \in [1,x_1]\times \cdots \times [1,x_{\ell-1}]} a(\bsn,n_{\ell}), $$ let $$\tilde{I}(j_1,\dots,j_i,n_{\ell})=\int_{[0,X]^{i}} \tilde{A}(\star,n_{\ell})\frac{\partial^i \psi}{\partial x_{j_1}\cdots \partial x_{j_i}}(\star,n_{\ell}) \ dx_{j_1}\cdots dx_{j_i}, $$ and let $$I(j_1,\dots,j_i)=\int_{[0,X]^{i}} A(\star)\frac{\partial^i \psi}{\partial x_{j_1}\cdots \partial x_{j_i}}(\star) \ dx_{j_1}\cdots dx_{j_i}, $$ where $A$ and $\star$ are as defined in the statement of the lemma. By our inductive hypothesis, we have \begin{align*} \sum_{\bsn \in [1,X]^{\ell}} a(\bsn)\psi(\bsn)&= \sum_{1\leq n_{\ell}\leq X} \sum_{\bsn \in [1,X]^{\ell-1}} a(\bsn, n_{\ell})\psi(\bsn, n_{\ell}) \\ &=\sum_{1\leq n_{\ell}\leq X} \Big( \tilde{A}(X,\dots,X,n_{\ell})\psi(X,\dots,X,n_{\ell}) +\sum_{i=1}^{\ell-1} (-1)^i\sum_{1\leq j_1<\cdots<j_i\leq \ell-1} \tilde{I}(j_1,\dots,j_i,n_{\ell})\Big) . \end{align*} We now apply the standard single-variable formula to the first term and each individual integral, yielding \begin{equation}\label{fterm} \sum_{1\leq n_{\ell}\leq X} \tilde{A}(X,\dots,n_{\ell})\psi(X,\dots,n_{\ell}) = A(X,\dots,X)\psi(X,\dots,X)-\int_0^X A(X,\dots,x_{\ell})\frac{\partial\psi}{\partial x_{\ell}}(X,\dots,x_{\ell}) dx_{\ell},\end{equation} and \begin{align*}\sum_{1\leq n_{\ell}\leq X} & (-1)^i\tilde{I}(j_1,\dots,j_i,n_{\ell}) \\ &= (-1)^i\int_{[0,X]^{i}} \left(A(\star,X)\frac{\partial^i \psi}{\partial x_{j_1}\cdots \partial x_{j_i}}(\star,X)-\int_0^X A(\star,x_{\ell})\frac{\partial^{i+1} \psi}{\partial x_{j_1}\cdots \partial x_{j_i}\partial x_{\l}}(\star,x_{\ell})dx_{\ell}\right)\ dx_{j_1}\cdots dx_{j_i} \\ &=(-1)^i I(j_1,\dots,j_i)+(-1)^{i+1}I(j_1,\dots,j_i,\ell). \end{align*} Summing this final expression over $1\leq i \leq \ell-1$ and over all choices of $1\leq j_1<\cdots<j_i\leq \ell-1$ accounts for all required terms with $1\leq i \leq \ell$ and $1\leq j_1<\cdots<j_i\leq \ell$, with the single exception of $i=1$ and $j_1=\ell$, which is precisely the integral present in (\ref{fterm}), and the induction is complete. \end{proof} 

We use Lemma \ref{mps} and the same calculation as in Proposition \ref{brunprop} to establish our asymptotic formula for sieved multivariate exponential sums near rationals with small denominator. 

\begin{lemma}\label{Sasym} Suppose $\ell,k\in \N$, $g(\bsx)=\sum_{|\bsi|\leq k} a_{\bsi} \bsx^{\bsi} \in \Z[x_1,\dots,x_{\ell}]$, and let $J=\sum_{|\bsi|\leq k} |a_{\bsi}|$. If $X,Y > 0$, $a,q\in \N$, and $\alpha=a/q+\beta$, then \begin{align*}\sum_{\bsn \in [1,X]^{\ell} \cap W(Y)}e^{2\pi i g(\bsn)\alpha}&=q^{-\ell} \prod_{\substack{ p\leq Y \\ p^{\gamma(p)}\nmid q}}\left(1-\frac{j(p)}{p^{\gamma(p)\ell}}
\right)\sum_{\boldsymbol{s}\in \{0,\dots,q-1\}^{\ell} \cap W^{q}(Y)}e^{2\pi i g(\boldsymbol{s})a/q}\int_{[0,X]^{\ell}}e^{2\pi i g(\bsx)\beta}d\bsx\\\\&\qquad + O_{k,\l}\left(qE(1+JX^{k}|\beta|)^{\ell}\right)  ,\end{align*} where $E$ is as in Proposition \ref{brunprop}.

\end{lemma}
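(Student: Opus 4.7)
The plan is to adapt the standard major-arc splitting, combined with the multivariate partial summation formula of Lemma \ref{mps} and an inclusion-exclusion estimate of the same flavor as Proposition \ref{brunprop}. First I would write $e^{2\pi i g(\bsn)\alpha} = e^{2\pi i g(\bsn)a/q}\,e^{2\pi i g(\bsn)\beta}$ and observe that the first factor depends only on $\bsn$ modulo $q$. Splitting the sum into residue classes $\bsn \equiv \bst \pmod{q}$ with $\bst \in \{0,\ldots,q-1\}^{\ell}$ and invoking the Chinese remainder theorem, the sieve condition $\bsn \in W(Y)$ factors into a condition on $\bst$ modulo the primes $p \leq Y$ with $p^{\gamma(p)} \mid q$ (which is precisely $\bst \in W^{q}(Y)$) together with independent conditions modulo the remaining primes $p \leq Y$ with $p^{\gamma(p)} \nmid q$.

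For each valid residue class $\bst$, the task becomes estimating the inner sum $\sum_{\bsn} e^{2\pi i g(\bsn)\beta}$, where $\bsn$ ranges over $[1,X]^{\ell}$ with $\bsn \equiv \bst \pmod{q}$ and satisfying the residual sieve condition. I would apply Lemma \ref{mps} with amplitude $\psi(\bsx) = e^{2\pi i g(\bsx)\beta}$ and coefficient indicator $a(\bsn)$ encoding both the congruence and the remaining sieve condition. The boundary term and each mixed-derivative integral term require an estimate for the counting function $A(x_1,\ldots,x_{\ell}) = \#\{\bsn \in [1,x_1]\times\cdots\times[1,x_{\ell}] : a(\bsn)=1\}$. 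Running the inclusion-exclusion argument from the proof of Proposition \ref{brunprop} after the substitution $\bsn = \bst + q\bsm$ gives
\[
A(\bsx) = \frac{x_1\cdots x_{\ell}}{q^{\ell}}\prod_{\substack{p \leq Y\\ p^{\gamma(p)}\nmid q}}\left(1 - \frac{j(p)}{p^{\gamma(p)\ell}}\right) + E',
\]
where $E'$ has the same shape as $E$ but with $X$ replaced by $X/q$.

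Substituting these asymptotics back into the partial summation identity, the main terms assemble (after a routine change of variables back to $\bsx$ and the Riemann-sum-to-integral conversion) into $q^{-\ell}\prod(1-j(p)/p^{\gamma(p)\ell})\int_{[0,X]^{\ell}} e^{2\pi i g(\bsx)\beta}\,d\bsx$, which when multiplied by $e^{2\pi i g(\bst)a/q}$ and summed over $\bst \in \{0,\ldots,q-1\}^{\ell}\cap W^{q}(Y)$ produces the claimed main term. The error terms are controlled by bounding $|\partial^{i}\psi/\partial x_{j_1}\cdots\partial x_{j_i}| \leq (2\pi|\beta| J X^{k-1})^{i}$ uniformly on $[0,X]^{\ell}$ and integrating over a box of side at most $X/q$, producing a factor $(JX^{k}|\beta|/q)^{i}$. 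Multiplying by $E' \ll (X/q)^{\ell-1}$ (or $(X/q)\log^{C}Y$ when $\ell=2$), summing over the $\binom{\ell}{i}$ choices of derivative patterns and over $i$ from $0$ to $\ell$, gives an error of order $(X/q)^{\ell-1}(1+JX^{k}|\beta|)^{\ell}$ (respectively $(X/q)\log^{C}Y(1+JX^{k}|\beta|)^{\ell}$) per residue class.

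The main obstacle is bookkeeping the $q$-powers so that multiplying the per-class error by the $q^{\ell}$ residue classes produces the stated $qE$ rather than a spurious $q^{\ell}E$: the saving of exactly $q^{\ell-1}$ comes from running the Brun-style count for the rescaled variable $\bsm$ rather than for $\bsn$, so the error inherits $(X/q)^{\ell-1}$ rather than $X^{\ell-1}$. Once this accounting is done correctly, multiplying by $q^{\ell}$ recovers $qX^{\ell-1}=qE$ (respectively $qX\log^{C}Y = qE$), yielding the stated error $O_{k,\ell}(qE(1+JX^{k}|\beta|)^{\ell})$.
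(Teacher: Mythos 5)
Your proposal is correct and is essentially the paper's proof with the two main steps interchanged: the paper applies Lemma \ref{mps} once to the twisted, sieved summatory function $T(x_1,\dots,x_\ell)=\sum_{\bsn\in B\cap W(Y)}e^{2\pi i g(\bsn)a/q}$, whose asymptotic (main term with density $q^{-\ell}\prod_{p^{\gamma(p)}\nmid q}(1-j(p)/p^{\gamma(p)\ell})$ restricted to $\bss\in W^q(Y)$, error $O(qE)$) comes from exactly the residue-class decomposition and Proposition \ref{brunprop}-style count that you perform class by class, and then it bounds the same mixed-partial integrals and integrates by parts to reassemble $\int_{[0,X]^\ell}e^{2\pi i g(\bsx)\beta}\,d\bsx$. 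The only quibbles are minor: your uniform bound $(2\pi|\beta|JX^{k-1})^i$ on the mixed partials omits the lower-order product-rule terms (the paper handles these by a max over $1\le j\le m$, absorbed into $(1+JX^k|\beta|)^\ell$), and the per-class factor should be $(JX^k|\beta|)^i$ rather than $(JX^k|\beta|/q)^i$, since differentiating $g(\bst+q\bsm)$ in $\bsm$ costs $q^i$, exactly cancelling the $(X/q)^i$ from the smaller box---but your final accounting $O_{k,\ell}\bigl(qE(1+JX^k|\beta|)^\ell\bigr)$ is the correct one and matches the paper.
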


\begin{proof} We begin by noting that for any $a,q \in \N$ and $0\leq x_1,\dots,x_{\ell} \leq X$, letting $$B=[1,x_1]\times\cdots\times [1,x_{\ell}],$$ we have
\begin{align*}T(x_1,\dots,x_{\ell})&:=\sum_{\bsn \in B\cap W(Y)}e^{2\pi i g(\bsn)a/q}\\ &\phantom{:}=\sum_{\boldsymbol{s}\in \{0,\dots,q-1\}^{\ell}} e^{2\pi i g(\boldsymbol{s})a/q} \left| \left\{\bsn \in B \cap W(Y) :  \bsn \equiv \bss \ (\text{mod }q) \right\} \right|.
\end{align*}
For $s\in W^q(Y)$ we have by the same calculation as Proposition \ref{brunprop} that \begin{equation*} \label{count} \left| \left\{\bsn \in B \cap W(Y) :  \bsn \equiv \bss \ (\text{mod }q) \right\} \right|=\frac{x_1\cdots x_{\ell}}{q^{\ell}}\prod_{\substack{p\leq Y\\ p^{\gamma(p)} \nmid q}} \left(1-\frac{j(p)}{p^{\gamma(p)\ell}} \right)+E/q^{\l-1} ,\end{equation*} where $E$ is as in Proposition \ref{brunprop}, whereas for $s\notin W^q(Y)$ the set is empty.

\noindent  Therefore, \begin{equation} \label{Tx} T(x_1,\dots,x_{\ell})=\frac{x_1\cdots x_{\ell}}{q^{\ell}}\prod_{\substack{p\leq Y\\ p^{\gamma(p)} \nmid q}} \left(1-\frac{j(p)}{p^{\gamma(p)\ell}} \right)\sum_{\boldsymbol{s}\in \{0,\dots,q-1 \}^{\ell}\cap W^q(Y)} e^{2\pi i g(\boldsymbol{s})a/q}+O\left(qE\right). \end{equation} Letting $\psi(\bsn)=e^{2\pi i g(\bsn) \beta}$, we now decompose our sum as $$\sum_{\bsn \in [1,X]^{\ell} \cap W(Y)}e^{2\pi i g(\bsn)\alpha}=\sum_{\bsn \in [1,X]^{\ell}} \left(1_{W(Y)}(\bsn)e^{2\pi i g(\bsn)a/q}\right) \psi(\bsn)  $$ and apply Lemma \ref{mps}, yielding 
\begin{align*} \sum_{\bsn \in [1,X]^{\ell} \cap W(Y)}e^{2\pi i g(\bsn)\alpha}&=T(X,\dots,X)\psi(X,\dots,X) \\ &\qquad+\sum_{m=1}^{\ell} (-1)^m\sum_{1\leq j_1<\cdots<j_m\leq \ell} \int_{[0,X]^{m}} T(\star)\frac{\partial^m \psi}{\partial x_{j_1}\cdots \partial x_{j_m}}(\star) \ dx_{j_1}\cdots dx_{j_m},
\end{align*} where $\star$ is as in Lemma \ref{mps}. Substituting (\ref{Tx}) gives the main term \begin{align*}&q^{-\ell}\prod_{\substack{p\leq Y\\ p^{\gamma(p)} \nmid q}} \left(1-\frac{j(p)}{p^{\gamma(p)\ell}} \right)\sum_{\boldsymbol{s}\in \{0,\dots,q-1 \}^{\ell}\cap W^q(Y)} e^{2\pi i g(\boldsymbol{s})a/q} \Big( X^{\ell}\psi(X,\dots,X) \\ &\qquad + \sum_{m=1}^{\ell} (-1)^m\sum_{1\leq j_1<\cdots<j_m\leq \ell} X^{\ell-m}\int_{[0,X]^{m}} x_{j_1}\cdots x_{j_m}\frac{\partial^m \psi}{\partial x_{j_1}\cdots \partial x_{j_m}}(\star) \ dx_{j_1}\cdots dx_{j_m}\Big). \end{align*} By iteratively applying integration by parts, this equals \begin{equation*}q^{-\ell}\prod_{\substack{p\leq Y\\ p^{\gamma(p)} \nmid q}} \left(1-\frac{j(p)}{p^{\gamma(p)\ell}} \right) \sum_{\boldsymbol{s}\in \{0,\dots,q-1 \}^{\ell}\cap W^q(Y)} e^{2\pi i g(\boldsymbol{s})a/q} \int_{[0,X]^{\ell}} \psi(\bsx) d\bsx, \end{equation*} as desired. It remains to bound the error term that results from our substitution of (\ref{Tx}). This error term is the sum of a first term of order $qE$ and   $2^{\ell}-1$ terms of the form \begin{equation*}qE\left(\int_{[0,X]^{m}} \frac{\partial^m \psi}{\partial x_{j_1}\cdots \partial x_{j_m}}(\star) \ dx_{j_1}\cdots dx_{j_m} \right). \end{equation*} Iteratively applying the product rule, we see that $\frac{\partial^m \psi}{\partial x_{j_1}\cdots \partial x_{j_m}}$ is the sum of less than $m!$ terms bounded in absolute value by $(2\pi k^{m}J|\beta|)^jX^{jk-m}$ for some $1\leq j \leq m$. In particular, each integral is bounded by $$\ell!\max_{1\leq j \leq \ell} (2\pi k^{\ell}JX^k|\beta|)^j \leq \ell!(1+2\pi k^{\ell}JX^k|\beta|)^{\ell}, $$ and the error bound follows.\end{proof}

\subsection{Local cancellation} In this section, we apply Theorem \ref{delmain} to establish the necessary cancellation in our sieved local exponential sums, yielding item (\ref{locitem}) in Theorem \ref{standalonethm}. We begin by invoking a multivariate version of Hensel's Lemma that allows us to reduce to the case of prime moduli. This statement in particular follows from Theorem~1.1 of \cite{Conrad}.

\begin{lemma}[Multivariable Hensel's Lemma] \label{hensel} Suppose $\ell\in \N$, $g\in \Z[x_1,\dots,x_{\ell}]$, $p$ is prime, $\bsn \in \Z^{\l}$, and $\gamma,v\in \N$ with $v\geq 2\gamma-1$. If $$g(\bsn)\equiv 0 \ \textnormal{mod} \ p^{2\gamma-1}$$ and $\grad g (\bsn) \not\equiv \bszero$ \textnormal{mod} $p^{\gamma}$, then there exists $\bsm\in \Z^{\l}$ with $g(\bsm) \equiv 0$ \textnormal{mod} $p^v$.
\end{lemma}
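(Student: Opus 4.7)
The plan is to reduce this multivariate Hensel-type assertion to the classical univariate Hensel's lemma by freezing all but one coordinate. The hypothesis $\grad g(\bsn)\not\equiv \bszero \pmod{p^{\gamma}}$ says precisely that at least one partial derivative satisfies $v_p\!\left(\frac{\partial g}{\partial x_i}(\bsn)\right)\le \gamma-1$; I would fix such an index $i$ at the outset.

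Next, I would introduce the univariate polynomial
\[
f(x) \;:=\; g(n_1,\dots,n_{i-1},x,n_{i+1},\dots,n_{\ell}) \;\in\; \Z[x].
\]
By construction $f(n_i)=g(\bsn)$ and $f'(n_i)=\frac{\partial g}{\partial x_i}(\bsn)$, so the hypotheses translate into $v_p(f(n_i))\ge 2\gamma-1$ and $v_p(f'(n_i))\le \gamma-1$, which together give
\[
v_p(f(n_i)) \;\ge\; 2\gamma-1 \;\ge\; 2v_p(f'(n_i))+1.
\]
This is exactly the valuation inequality powering the classical univariate Hensel's lemma. Applying it, one obtains $\tilde m\in \Z_p$ with $f(\tilde m)=0$ and $\tilde m\equiv n_i\pmod{p^{\gamma}}$; truncating $\tilde m$ to an integer $m_i$ modulo $p^v$ yields $f(m_i)\equiv 0 \pmod{p^v}$ for any $v\ge 2\gamma-1$. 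Setting $\bsm:=(n_1,\dots,n_{i-1},m_i,n_{i+1},\dots,n_{\ell})$ then gives $g(\bsm)=f(m_i)\equiv 0 \pmod{p^v}$, as required.

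If one prefers to avoid quoting the univariate Hensel result as a black box, the same conclusion is obtained by Newton iteration: starting from $x_0:=n_i$ and iterating $x_{k+1}:=x_k-f(x_k)/f'(x_k)$ in $\Z_p$, a standard induction shows that $v_p(f'(x_k))$ remains equal to $v_p(f'(n_i))\le \gamma-1$ while $v_p(f(x_k))$ at least doubles (minus a bounded loss) at each step, eventually exceeding $v$; reducing any such $x_k$ mod $p^v$ yields the desired $m_i$. The only real bookkeeping is verifying that the valuation of $f'$ does not degrade along the iteration, which follows from $x_{k+1}\equiv x_k \pmod{p^{v_p(f(x_k))-v_p(f'(x_k))}}$ and the Taylor expansion of $f'$. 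There is no substantive obstacle here — the content of the lemma is the reduction to univariate Hensel, and the route through the Conrad reference is an equally valid one-line finish.
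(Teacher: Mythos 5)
Your proof is correct, and it is genuinely different from what the paper does: the paper gives no argument of its own, but simply cites Theorem~1.1 of Conrad's note \cite{Conrad} on a multivariable Hensel's lemma, which produces an honest $\Z_p$-point of $g$ congruent to $\bsn$ modulo $p^{\gamma}$. Your route instead observes that the statement, in the weak form needed here, has no genuinely multivariate content: since $\grad g(\bsn)\not\equiv\bszero \ (\mathrm{mod}\ p^{\gamma})$ forces some partial derivative to satisfy $v_p\bigl(\tfrac{\partial g}{\partial x_i}(\bsn)\bigr)\leq \gamma-1$, freezing the other coordinates gives a univariate $f\in\Z[x]$ with $v_p(f(n_i))\geq 2\gamma-1> 2v_p(f'(n_i))$, so the classical strong (one-variable) Hensel's lemma, or equivalently Newton iteration, produces a root in $\Z_p$, and truncating it modulo $p^{v}$ gives $\bsm$ with $g(\bsm)\equiv 0 \ (\mathrm{mod}\ p^{v})$ (indeed with $\bsm\equiv\bsn \ (\mathrm{mod}\ p^{\gamma})$, which is more than the lemma asks). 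The valuation bookkeeping you indicate is exactly right, and the only hypotheses used are the ones stated, so there is no gap. What each approach buys: the citation to \cite{Conrad} is shorter and yields the stronger $p$-adic conclusion, while your reduction is self-contained and elementary, and makes transparent that the multivariate lemma is just the univariate one applied along a coordinate direction in which the gradient is not too divisible by $p$ --- which is also precisely how the lemma is used later in the proof of Lemma~\ref{locgen}.
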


We now prove the following multivariate generalization of Lemma 4.3 in \cite{ricemax}.

\begin{lemma}\label{locgen} Suppose $\ell\in \N$, $g\in \Z[x_1,\dots, x_{\l}]$ with $\deg(g)=k\geq 2$, and $Y>0$. If $q\in \N$ has prime factorization $q=p_1^{v_1}\cdots p_r^{v_r}$ with $p_1<\cdots< p_t\leq Y < p_{t+1}< \cdots < p_r$, and $(a,q)=1$, then $$\left| \sum_{\boldsymbol{s}\in \{0,\dots,q-1\}^{\ell} \cap W^{q}(Y)}e^{2\pi i g(\boldsymbol{s})a/q} \right| \leq C_1 \prod_{i=1}^t \left((k-1)^{\l}p_i^{\l/2}+j(p_i)\right)\prod_{i=t+1}^r C_2(v_i+1)^{\ell} p_i^{v_i(\l-1/k)}, $$ where $C_2=C_2(k)$ and $C_1$ depends only on the moduli at which $\grad g$ identically vanishes and the primes $p\leq Y$ dividing $q$ modulo which $g$ is not Deligne. Further, the sum is $0$ if $v_i\geq 2\gamma(p_i)$ for some $1\leq i \leq t$.

\end{lemma}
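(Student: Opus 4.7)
The plan is to apply the Chinese Remainder Theorem to factor $q = p_1^{v_1} \cdots p_r^{v_r}$, decomposing the sum into a product of local sums modulo each $p_i^{v_i}$ with coprime twists $a_i \equiv a \cdot (q/p_i^{v_i})^{-1} \pmod{p_i^{v_i}}$. The sieve condition $\bss \in W^q(Y)$ decouples under CRT, since the condition $\grad g(\bss) \not\equiv \bszero \pmod{p_i^{\gamma(p_i)}}$ (when $p_i \le Y$ and $p_i^{\gamma(p_i)} \mid q$) depends only on the residue of $\bss$ modulo $p_i^{v_i}$. It therefore suffices to bound each local factor separately.

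For the large primes $p_i > Y$, no sieve restriction applies, so the $i$-th factor is a pure complete exponential sum modulo $p_i^{v_i}$. The bound $C_2(v_i+1)^{\l} p_i^{v_i(\l - 1/k)}$ should follow by fixing $\l - 1$ of the variables and reducing to the univariate Weyl--Hua estimate (the univariate analog being Lemma 4.3 of \cite{ricemax}). The leading coefficient of the resulting univariate polynomial may vanish modulo $p_i$ for some choices of the fixed variables, but such exceptional residues can be handled by trivial bounds, contributing only to the polynomial factor $(v_i + 1)^{\l}$.

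For small primes $p_i \le Y$ with $v_i \ge 2\gamma(p_i)$, I would use a Hensel-style vanishing argument: substitute $\bss = \bst + p_i^{v_i - \gamma(p_i)} \boldsymbol{u}$ with $\bst$ ranging over residues modulo $p_i^{v_i - \gamma(p_i)}$ and $\boldsymbol{u}$ over residues modulo $p_i^{\gamma(p_i)}$. Because $2(v_i - \gamma(p_i)) \ge v_i$, Taylor expansion gives $g(\bss) \equiv g(\bst) + p_i^{v_i - \gamma(p_i)} \grad g(\bst) \cdot \boldsymbol{u} \pmod{p_i^{v_i}}$, so the inner sum over $\boldsymbol{u}$ modulo $p_i^{\gamma(p_i)}$ is a standard character sum vanishing unless $\grad g(\bst) \equiv \bszero \pmod{p_i^{\gamma(p_i)}}$---but this is exactly the condition excluded by the sieve, so the entire local factor is zero. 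For $p_i \le Y$ with $v_i < 2\gamma(p_i)$: if $g$ is Deligne modulo $p_i$, then $\gamma(p_i) = 1$ forces $v_i = 1$, and inclusion-exclusion expresses the sieved sum as the full complete sum (bounded by $(k-1)^{\l} p_i^{\l/2}$ via Theorem \ref{delmain}) minus the contribution from at most $j(p_i)$ unit-modulus terms, yielding $(k-1)^{\l} p_i^{\l/2} + j(p_i)$. The finitely many primes at which $g$ fails to be Deligne modulo $p_i$, or at which $\grad g$ identically vanishes to some order modulo a prime power, contribute factors bounded by constants depending only on $g$, which are absorbed into $C_1$.

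The main obstacle will be the Taylor expansion calculation in the Hensel vanishing step: one must verify that all quadratic and higher-order terms in $g(\bst + p_i^{v_i - \gamma(p_i)} \boldsymbol{u})$ are genuinely divisible by $p_i^{v_i}$. This requires working with integer multinomial coefficients rather than Taylor coefficients involving factorials in the denominator, so that the identity holds in $\Z/p_i^{v_i}\Z$ without any need to invert integers modulo $p_i$; the inequality $v_i \ge 2\gamma(p_i)$ is exactly what forces the clean cancellation. The resulting interaction of the sieve condition with orthogonality of additive characters is what yields exact vanishing, rather than merely a small error bound.
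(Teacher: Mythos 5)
Your overall architecture matches the paper's: the Chinese Remainder Theorem factorization into local sums, the bound $(k-1)^{\l}p^{\l/2}+j(p)$ for small Deligne primes with $v_i=1$ obtained from Theorem \ref{delmain} plus at most $j(p)$ unit-modulus correction terms, the absorption of the finitely many problematic primes into $C_1$, and exact vanishing of the local factor when $v_i\geq 2\gamma(p_i)$. Your vanishing step, however, takes a genuinely different route than the paper: you write $\bss=\bst+p^{v-\gamma}\boldsymbol{u}$, use integrality of the coefficients of the shifted polynomial (your multinomial-coefficient remark) to truncate the expansion modulo $p^v$ via $2(v-\gamma)\geq v$, and then orthogonality in $\boldsymbol{u}$ kills every term surviving the sieve; note the sieve condition is indeed a condition on $\bst$ alone since $v-\gamma\geq\gamma$. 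The paper instead invokes the multivariable Hensel lemma (Lemma \ref{hensel}) to show that, on each fiber over a sieved residue $\tilde{\bss}$ modulo $p^{2\gamma(p)-1}$, the values of $g$ run over a full set of residues $g(\tilde{\bss})+p^{2\gamma(p)-1}t$ modulo $p^v$, and sums the resulting roots of unity. Both arguments are valid; yours avoids Hensel lifting entirely and is arguably cleaner.

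There is, however, a genuine gap in your treatment of the large primes $p_i>Y$. After fixing $\tilde{\bss}=(s_2,\dots,s_{\l})$, the relevant degeneracy for the univariate complete-sum estimate is not the vanishing of the leading coefficient of $g(x,\tilde{\bss})$ modulo $p$ but the power of $p$ dividing its content: the Chen--Hua bound gives $\ll_k p^{v(1-1/k)}\gcd(\cont(g(\cdot,\tilde{\bss})),p^v)^{1/k}$. Your proposal to dispose of the exceptional $\tilde{\bss}$ by trivial bounds fails quantitatively once $v_i>k$: residues with leading coefficient divisible by $p$ have density roughly $1/p$, so trivially bounding their inner sums by $p^v$ contributes on the order of $p^{v\l-1}$, which exceeds the target $p^{v(\l-1/k)}$ whenever $v>k$, and the polynomial factor $(v_i+1)^{\l}$ cannot absorb a power of $p$. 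What is actually needed (and what the paper does) is a dichotomy over the exact power $p^w\,\|\,\cont(g(\cdot,\tilde{\bss}))$: after pulling $\gcd(\cont(g),p^v)^{1/k}$ into $C_1$, pick a coefficient $a_{\bsi}$ of $g$ with $i_1>0$ and $p\nmid a_{\bsi}$; then $p^w\mid\cont(g(\cdot,\tilde{\bss}))$ forces $p^{\lceil w/k\rceil}\mid s_2\cdots s_{\l}$, so there are fewer than $(w+1)^{\l-1}p^{v(\l-1)-w/k}$ such $\tilde{\bss}$, and the loss $p^{w/k}$ in the exponential-sum estimate cancels exactly against this count; summing over $0\leq w\leq v$ is what produces the factor $(v_i+1)^{\l}$. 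Without this bookkeeping the claimed bound for the unsieved local sums does not follow.
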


\begin{proof} Factor $q=p_1^{v_1}\cdots p_r^{v_r}$ as in the lemma. By the Chinese Remainder Theorem, we have \begin{equation*} \sum_{\boldsymbol{s}\in \{0,\dots,q-1\}^{\ell} \cap W^{q}(Y)}e^{2\pi i g(\bss)a/q}= \prod_{m=1}^{r} \sum_{\boldsymbol{s}\in \{0,\dots,p_m^{v_m}-1\}^{\ell} \cap W^{p_m^{v_m}}(Y)}e^{2\pi i g(\bss)a_m/p_m^{v_m}}, \end{equation*} where $a_1,\dots,a_r$ are the unique residues satisfying $a/q \equiv a_1/p_1^{v_1}+\cdots+a_{r}/p_r^{v_r} \ \text{mod }1. $

\noindent Suppose $p^v=p_m^{v_m}$ with $\gamma(p)>1$ and $v<2\gamma(p)$. By definition of $\gamma$, $\grad g$ identically vanishes modulo $p^{\gamma(p)-1}$. Since $p^{2\gamma(p)-1}\leq p^{3(\gamma(p)-1)}$, we can bound $p^{v}$ by the cube of a modulus at which $\grad g$ identically vanishes, trivially bound the corresponding sum, and absorb it into the constant $C_1$ in the conclusion of the lemma. 

\noindent Next suppose $p^v=p_m^{v_m}$ with $p\leq Y$ and $v=\gamma(p)=1$. Recalling that $j(p)$ is the number of zeros of $\grad g$ modulo $p$  and applying Theorem \ref{delmain}, we have for $p\nmid b$ that  $$\left|\sum_{\bss \in \{0,\dots,p-1\}^{\ell}\cap W^{p}(Y)}e^{2\pi i g(\bss)b/p}\right| \leq (k-1)^{\l}p^{\ell/2}+j(p), $$ provided $g$ is Deligne modulo $p$, and the remaining such primes are absorbed into $C_1$. 

\noindent Now suppose that $p^v=p_m^{v_m}$ with $p\leq Y$ and $v\geq 2\gamma(p)$, and let $w=2\gamma(p)-1$. If $\bss \in \{0,\dots, p^v-1\}^{\ell}$ and $\tilde{\bss}$ is the reduced residue class of $\bss$ modulo $p^{w}$, then we have that $g(\bss)\equiv p^{w}t+g(\tilde{\bss}) \ (\text{mod }p^v)$ for some $0\leq t\leq p^{v-w}-1$. Conversely,  if $\tilde{\bss}\in \{0,\dots, p^w-1\}^{\ell}$ with  $\grad g(\tilde{\bss})\not\equiv \bszero \  (\text{mod }p^{\gamma(p)})$, then for every $0\leq t \leq p^{v-w}-1$, Lemma \ref{hensel} applied to the polynomial $g(\bsx)-(p^{w}t+g(\tilde{\bss}))$ yields $\bss \in \{0,\dots,p^v-1\}^{\ell}$ with $g(\bss)\equiv p^{w}t+g(\tilde{\bss}) \ (\text{mod }p^v)$.
 
\noindent In other words, the map $F$ on $\Z/p^{v-w}\Z$ defined by $g(p^{w}t+\tilde{\bss})\equiv p^{w}F(t)+g(\tilde{\bss}) \ (\text{mod }p^v)$ is a bijection.  In particular, if $p\nmid b$, then \begin{align*}\sum_{\bss\in \{0,\dots, p^v-1\}^{\ell} \cap W^{p^v}(Y)}e^{2\pi i g(\bss)b/p^v} &= \sum_{\substack{\tilde{\bss}\in \{0,\dots,p^w-1\}^{\ell} \\ \grad g(\tilde{\bss})\not\equiv \bszero \ (\text{mod }p^{\gamma(p)})}} \sum_{t=0}^{p^{v-w}-1}e^{2\pi \i g(p^{w}t+\tilde{\bss})b/p^v}\\  &=\sum_{\substack{\tilde{\bss}\in \{0,\dots,p^w-1\}^{\ell} \\ \grad g(\tilde{\bss})\not\equiv \bszero \ (\text{mod }p^{\gamma(p)})}} \sum_{t=0}^{p^{v-w}-1}e^{2\pi \i \left(p^{w}t+g(\tilde{\bss})\right)b/p^v} \\ &=0,\end{align*} where the last equality is the fact that the sum in $t$ runs over the full collection of $p^{v-w}$-th roots of unity. 

\noindent Finally, suppose $p^v=p_m^{v_m}$ with $p> Y$. We note that $W^{p^v}(Y)=\N$ and we only exploit cancellation in a single variable. To this end, for each $\tilde{\bss}=(s_2,\dots,s_{\ell})\in \{0,\dots,p^v-1\}^{\ell-1}$, we define $\tilde{g}$ by $\tilde{g}(x)=g(x,\tilde{\bss})$.  Utilizing the  standard single-variable complete sum estimate (see \cite{Chen} for example), we have for $b\nmid p$ that  \begin{align*}\left|\sum_{\bss\in\{0,\dots,p^v-1\}^{\ell}}e^{2\pi i g(\bss)b/p^v}\right| &\leq  \sum_{\tilde{\bss}\in \{0,\dots,p^v-1\}^{\ell-1}} \left| \sum_{s=0}^{p^v-1} e^{2\pi i \tilde{g}(s)b/p^v} \right| \\ &\ll_k  p^{v(1-1/k)}\sum_{\tilde{\bss}\in \{0,\dots,p^v-1\}^{\ell-1}}\gcd(\text{cont}(\tilde{g}),p^v)^{1/k}. \end{align*} 
To analyze the remaining sum, we note that at the expense of the term $\gcd(\text{cont}(g),p^v)^{1/k}$ in our final estimate, we can cancel factors of $p$ from the coefficients of $g$ and assume that $p\nmid \text{cont}(g)$. In this case, suppose $a_{\bsi}=a_{i_1,\dots,i_{\ell}}$ with $0<|\bsi|\leq k$ is a coefficient of $g$, corresponding to $x_1^{i_1}\cdots x_{\l}^{i_{\l}}$, that is not divisible by $p$. Further, assume that $i_1>0$, as if $i_1=0$ then we could just relabel our coordinates. In this case, for each $0\leq w \leq v$, $\gcd(\text{cont}(\tilde{g}),p^v)=p^w$ only if $p^w\mid s_2^{i_2}\cdots s_{\ell}^{i_{\l}}$, so in particular $p^{\lceil w/k \rceil}\mid s_2\cdots s_{\ell}$, which occurs for fewer than $(w+1)^{\ell-1}p^{v(\ell-1)-w/k}$ choices of $\tilde{\bss}$. In particular, \begin{align*}\sum_{\tilde{s}\in \{0,\dots,p^v-1\}^{\ell-1}}\gcd(\text{cont}(\tilde{g}),p^v)^{1/k}&\leq \gcd(\text{cont}(g),p^v)^{1/k} \sum_{w=0}^{v} (w+1)^{\ell-1}p^{v(\ell-1)-w/k}p^{w/k} \\ &\leq (v+1)^{\ell}\gcd(\text{cont}(g),p^v)^{1/k}p^{v(\ell-1)}. \end{align*}  The $\gcd(\text{cont}(g),p^v)^{1/k}$ term can be absorbed into $C_1$, and the remaining bound on the exponential sum modulo $p^v$ is a constant depending on $k$ times $p^{v(1-1/k)}(v+1)^{\ell} p^{\ell(v-1)}=(v+1)^{\ell}p^{v(\ell-1/k)}$, as required. Having accounted for all prime divisors of $q$, the proof is complete.
 \end{proof} 

Lemma \ref{locgen} combines with Lemma \ref{gradconst} as well as the estimates $\prod_{p\mid q}\left(1+\frac{C}{p} \right)\leq (q/\phi(q))^C$ and $\prod_{p\mid q}\left(1+\frac{C}{p^{3/2}} \right)\ll_C 1$  to yield item (\ref{locitem}) of Theorem \ref{standalonethm}, restated below. 

\begin{corollary}\label{qcor} If $\ell\geq 2$, $g\in \Z[x_1,\dots, x_{\l}]$ with $\deg(g)=k\geq 2$, and $(a,q)=1$, then $$\left| \sum_{\boldsymbol{s}\in \{0,\dots,q-1\}^{\ell} \cap W^{q}(Y)}e^{2\pi i g(\boldsymbol{s})a/q} \right| \leq C_1\begin{cases} (k-1)^{\l \omega(q)}\Phi(q,\l) q^{\ell/2} &\text{if }q\leq Y \\ C_2^{\omega(q)}\tau(q)^{\l}q^{\ell-1/k} &\text{for all }q \end{cases},$$ where $C_2=C_2(k)$, $\Phi(q,2)=(q/\phi(q))^{C_2}$, $\Phi(q,\l)\ll_{k,\l} 1$ for $\l\geq 3$, and $C_1$ depends only on the moduli at which $\grad g$ identically vanishes and the primes $p\leq Y$ dividing $q$ modulo which $g$ is not Deligne.

\end{corollary}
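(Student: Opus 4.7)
The plan is to combine Lemma~\ref{locgen}'s prime-by-prime factorization with Lemma~\ref{gradconst}'s uniform control on $j(p)$, finishing with the two Euler product estimates quoted in the paragraph preceding the statement. First I would note that for a fixed $g$, only finitely many primes $p$ can satisfy either $\gamma(p) \geq 2$ or the failure of the Deligne condition modulo $p$; the entire contribution of such primes to the product in Lemma~\ref{locgen} can be absorbed into $C_1$. For all remaining ``generic'' primes, $\gamma(p) = 1$, and the vanishing clause in Lemma~\ref{locgen} forces $v_p(q) \leq 1$ in the small-prime regime (otherwise the sum is identically zero).

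For the first case ($q \leq Y$), after absorbing the bounded contribution of non-generic primes into $C_1$, the surviving part of $q$ is a squarefree integer $q'$ whose prime factors all satisfy $\gamma(p) = 1$. Then
\[
\prod_{p \mid q'}\bigl((k-1)^\ell p^{\ell/2} + j(p)\bigr) = (k-1)^{\ell\omega(q')} (q')^{\ell/2} \prod_{p\mid q'}\left(1+\frac{j(p)}{(k-1)^\ell p^{\ell/2}}\right).
\]
By Lemma~\ref{gradconst}, $j(p) \ll_{k,\ell} 1$ at generic primes, so the trailing product is at most $\prod_{p\mid q}(1 + C/p^{\ell/2})$ for some $C = C(k,\ell)$. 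For $\ell = 2$ I would invoke $\prod_{p\mid q}(1+C/p) \leq (q/\phi(q))^C$, which delivers the $\Phi(q,2)$ factor; for $\ell \geq 3$ the estimate $\prod_{p}(1+C/p^{3/2}) \ll_C 1$ (used with $p^{\ell/2} \geq p^{3/2}$) absorbs the whole product into a constant, yielding $\Phi(q,\ell) \ll_{k,\ell} 1$. Since $\omega(q') \leq \omega(q)$ and $(q')^{\ell/2} \leq q^{\ell/2}$, the first case of the corollary follows.

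For the general bound, I would split $q = q_s q_l$ into its small-prime and large-prime parts. The small-prime contribution was bounded above by $C_1(k-1)^{\ell\omega(q_s)} q_s^{\ell/2}$, while Lemma~\ref{locgen} gives the large-prime contribution as
\[
\prod_{p^v \,\|\, q_l} C_2(v+1)^\ell p^{v(\ell-1/k)} = C_2^{\omega(q_l)} \tau(q_l)^\ell q_l^{\ell - 1/k}.
\]
Using the inequality $\ell/2 \leq \ell - 1/k$, valid whenever $k,\ell \geq 2$, I would absorb $q_s^{\ell/2} \leq q_s^{\ell-1/k}$ into $q^{\ell-1/k}$; after enlarging $C_2$ to dominate $(k-1)^\ell$ and combining the $\omega$ and $\tau$ factors across $q_s$ and $q_l$, the total bound takes the claimed form $C_1 C_2^{\omega(q)} \tau(q)^\ell q^{\ell-1/k}$. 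The argument is essentially bookkeeping, and the only mild obstacle is isolating the finite exceptional set of non-generic primes cleanly into $C_1$; no analytic difficulty arises.
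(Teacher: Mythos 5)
Your proposal is correct and follows the paper's own route: the paper deduces Corollary~\ref{qcor} in one line by combining Lemma~\ref{locgen} with Lemma~\ref{gradconst} and the two Euler-product estimates $\prod_{p\mid q}(1+C/p)\leq (q/\phi(q))^{C}$ and $\prod_{p\mid q}(1+C/p^{3/2})\ll_C 1$, which is exactly the bookkeeping you carry out. Your explicit handling of the exceptional primes, the vanishing clause, and the small/large prime split just makes the paper's implicit details precise.
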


\subsection{Oscillatory integral estimate} In order to establish (\ref{SminII}) in the case that $\alpha$ is close, but not too close, to a rational with very small denominator, we need to control the oscillatory integral in the asymptotic formula given by Lemma \ref{Sasym}. To achieve this, we invoke the following standard estimate, given for example in Lemma 2.8 of \cite{vaughan}. 

\begin{lemma}[Van der Corput's Lemma] \label{vdcl} If  $X>0$, $\beta\neq 0$, $k\in \N$, and $g\in \Z[x]$ with $\deg(g)=k$, then $$\left|\int_0^X e^{2\pi i g(x)\beta} dx \right| \ll |\beta|^{-1/k}. $$

\end{lemma}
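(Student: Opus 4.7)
The plan is to deduce this as a direct corollary of the classical form of Van der Corput's Lemma: if $\phi$ is $C^k$ on an interval $[a,b]$ with $|\phi^{(k)}(x)|\geq \lambda>0$ throughout, then $\left|\int_a^b e^{i\phi(x)}\,dx\right| \ll_k \lambda^{-1/k}$. To apply this here, I note that since $g\in \Z[x]$ has degree exactly $k$, its leading coefficient $a_k$ is a nonzero integer, so $|a_k|\geq 1$. Setting $\phi(x)=2\pi \beta g(x)$, I compute $\phi^{(k)}(x)=2\pi k!\,a_k\beta$, whose absolute value is constantly at least $2\pi k!\,|\beta|$ on $[0,X]$. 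The classical estimate then applies with $\lambda=2\pi k!\,|\beta|$, immediately yielding the claimed bound.

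For the classical estimate itself, I would induct on $k$. For the base case $k=1$, the hypothesis $|\phi'|\geq \lambda$ forces $\phi$ to be strictly monotonic, so the identity $e^{i\phi}=\frac{1}{i\phi'}\frac{d}{dx}e^{i\phi}$ combined with a single integration by parts and the monotonicity of $1/\phi'$ produces the sharper bound $O(\lambda^{-1})$. For the inductive step from $k-1$ to $k$, the constancy of sign of $\phi^{(k)}$ ensures $\phi^{(k-1)}$ is strictly monotonic, so the sublevel set $E_\delta=\{x\in [a,b]:|\phi^{(k-1)}(x)|<\delta\}$ is a subinterval of length at most $2\delta/\lambda$. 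On the complement $[a,b]\setminus E_\delta$, which consists of at most two intervals on which $|\phi^{(k-1)}|\geq \delta$, the inductive hypothesis yields a contribution of $O_k(\delta^{-1/(k-1)})$ per piece. The trivial bound gives the integral over $E_\delta$ a contribution of $O(\delta/\lambda)$. Balancing with the choice $\delta=\lambda^{(k-1)/k}$ produces the desired $O_k(\lambda^{-1/k})$.

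The only real obstacle is tracking the constants as they accumulate through the induction, but these depend only on $k$, as required. Since the result is completely classical and explicitly stated as Lemma 2.8 of \cite{vaughan}, the proof reduces essentially to citation; the two-paragraph sketch above is included only to remind the reader of the structure of the argument.
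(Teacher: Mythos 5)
Your proof is correct and takes essentially the same route as the paper, which proves this lemma purely by citation of the standard $k$-th derivative test (Lemma 2.8 of \cite{vaughan}); your reduction via $|a_k|\geq 1$ and $\phi^{(k)}(x)=2\pi k!\,a_k\beta$, so that the test applies with $\lambda=2\pi k!\,|\beta|$, is exactly the intended deduction. One small caveat about your supplementary sketch: the classical estimate for $k=1$ requires $\phi'$ to be monotonic (mere strict monotonicity of $\phi$ does not give monotonicity of $1/\phi'$, which your base case asserts without justification), but this is harmless in the present setting since the phase is a polynomial --- for $k=1$ the derivative $\phi'$ is constant, and in the inductive step $\phi''$ has constant sign on the relevant pieces, supplying the needed monotonicity.
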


Utilizing Lemma \ref{vdcl} to exploit cancellation in a single variable, then trivially bounding the integral in the remaining variables, we have the following bound for the integral in the conclusion of Lemma \ref{Sasym}.

\begin{corollary} \label{vdccor} If $X>0$, $\beta\neq 0$, $k,\ell\in \N$, and $g\in \Z[x_1,\dots,x_{\l}]$ with $\deg(g)=k$, then $$\left|\int_{[0,X]^{\ell}}e^{2\pi i g(\bsx)\beta}d\bsx\right| \ll \min\{ X^{\ell}, X^{\ell-1}|\beta|^{-1/k}\}.$$

\end{corollary}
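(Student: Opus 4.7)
The plan is to prove the two bounds in the minimum independently, combining them at the end. The trivial bound $X^{\ell}$ is immediate from $|e^{2\pi i g(\bsx)\beta}| \equiv 1$, which gives $\left|\int_{[0,X]^{\ell}} e^{2\pi i g(\bsx)\beta}\,d\bsx\right| \leq X^{\ell}$.

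For the $X^{\ell-1}|\beta|^{-1/k}$ bound, I would isolate a single variable via Fubini's theorem and apply Lemma~\ref{vdcl} to exploit cancellation in that variable. Specifically, choose an index $j \in \{1,\ldots,\ell\}$ in which $g$ attains its maximal one-variable degree (ideally equal to the total degree $k$), and write
\[
	\int_{[0,X]^{\ell}} e^{2\pi i g(\bsx)\beta}\,d\bsx = \int_{[0,X]^{\ell-1}} \left(\int_0^X e^{2\pi i g(\bsx)\beta}\,dx_j\right) d\tilde{\bsx},
\]
where $\tilde{\bsx}$ collects the coordinates other than $x_j$. For each fixed $\tilde{\bsx}$, the inner integrand is $e^{2\pi i P_{\tilde{\bsx}}(x_j)}$, where $P_{\tilde{\bsx}}(x_j) = g(\bsx)\beta$ is a univariate polynomial in $x_j$ of degree at most $k$. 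Applying Lemma~\ref{vdcl} bounds this inner integral by $\ll |\beta|^{-1/k}$, and trivially bounding the outer region of integration by $X^{\ell-1}$ yields the claimed estimate.

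The main point requiring attention is the uniformity of the Van der Corput constant across fibers $\tilde{\bsx}$: the leading coefficient of $P_{\tilde{\bsx}}$ as a polynomial in $x_j$ depends on $\tilde{\bsx}$, and to obtain a clean bound of $|\beta|^{-1/k}$ one wants the $x_j$-degree of $g$ to equal $k$. Since the implied constant in Lemma~\ref{vdcl} is permitted to depend on the polynomial being integrated, this dependence of the leading coefficient on $\tilde{\bsx}$ is absorbed into the overall implied constant in the corollary, which may itself depend on $g$. Taking the minimum of the two bounds completes the argument.
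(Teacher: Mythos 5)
The trivial bound $X^{\ell}$ is fine, but your key step for the bound $X^{\ell-1}|\beta|^{-1/k}$ --- fixing $\tilde{\bsx}$, applying Lemma~\ref{vdcl} to the fibre integral, and ``absorbing'' the dependence of the fibre polynomial on $\tilde{\bsx}$ into the implied constant --- has a genuine gap. Lemma~\ref{vdcl} is a statement about a polynomial in $\Z[x]$ of degree exactly $k$; its implied constant is uniform precisely because the leading coefficient is a nonzero \emph{integer}, so the $k$-th derivative of the phase is at least $k!|\beta|$ in absolute value. Your fibre polynomial $P_{\tilde{\bsx}}(x_j)=g(x_j,\tilde{\bsx})\beta$ has real coefficients depending on $\tilde{\bsx}$, its degree in $x_j$ can be strictly less than $k$ for \emph{every} choice of $j$ (e.g.\ $g(x,y)=x^2y+xy^2$), and its leading coefficient can vanish or be arbitrarily small as $\tilde{\bsx}$ ranges over $[0,X]^{\ell-1}$. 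As a result the uniform fibre bound you assert is false: for $g(x,y)=xy$ (so $k=2$) and $j=1$, the inner integral at $y=0$ equals $X$, and for $0<y\le (2X|\beta|)^{-1}$ it is still $\gg X$, which exceeds any fixed multiple of $|\beta|^{-1/2}$ once $X$ is large. No constant ``depending on $g$'' can absorb this, because the failure is in $\tilde{\bsx}$, $X$, and $\beta$, not in $g$; and in any case the corollary is applied in Section~\ref{expest} to the auxiliary polynomials $h_d$ for all $d$, so its implied constant must depend only on $k$ and $\ell$ (this is what the unsubscripted $\ll$ means under the paper's conventions). The corollary itself is true --- in the example the region of small $y$ contributes $O(|\beta|^{-1}\log(2+X^2|\beta|))\ll X|\beta|^{-1/2}$ --- but your argument does not reach it, because ``trivially bounding the outer integration by $X^{\ell-1}$'' presupposes exactly the uniform fibre bound that fails.

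The paper's own justification is a single sentence in the same spirit as yours (one-variable van der Corput plus a trivial bound in the remaining variables), and a correct implementation needs one more ingredient, the same device used in the proof of Lemma~\ref{weyl3}: pick $(c_2,\dots,c_{\ell})\in\{0,1,\dots,k\}^{\ell-1}$ with $g^k(1,c_2,\dots,c_{\ell})\neq 0$ and make the unimodular change of variables $x_1=y_1$, $x_j=y_j+c_jy_1$ for $2\le j\le \ell$. The coefficient of $y_1^k$ in the transformed polynomial is the nonzero integer $g^k(1,c_2,\dots,c_{\ell})$, so along every line parallel to the $y_1$-axis the $k$-th derivative of the phase has absolute value at least $k!|\beta|$, and van der Corput in its derivative form (Lemma 2.8 of \cite{vaughan}, which underlies Lemma~\ref{vdcl}) bounds each one-dimensional slice integral by $\ll_k |\beta|^{-1/k}$ uniformly in the other coordinates. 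Since the transformed domain has slices of length at most $X$ and projects onto a set of measure $O_{k,\ell}(X^{\ell-1})$, Fubini then gives the stated estimate with a constant depending only on $k$ and $\ell$. (Alternatively, one can stay in the original coordinates and treat separately the set of $\tilde{\bsx}$ where the fibre leading coefficient is small, as the $g(x,y)=xy$ computation suggests, but this is messier.) So your overall strategy matches the paper's intent, but as written the uniformity step is a real gap that needs the change-of-variables (or a small-coefficient-region) argument to close.
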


\subsection{Minor arc estimates}
In an effort to establish item (\ref{minitem}) of Theorem \ref{standalonethm}, we begin by invoking a variation of the most traditional minor arc estimate, Weyl's Inequality.

\begin{lemma}[Lemma 3, \cite{CLR}] \label{weyl2}  Suppose $k\in \N$, $g(x)=a_0+a_1x+\cdots+a_{k}x^{k}$ with $a_0\dots,a_k \in \R$ and $a_{k} \in \N$. If $X>0$, $a,q\in \N$ with $(a,q)=1$, and $|\alpha-a/q|<q^{-2}$, then 
$$ \left|\sum_{n=1}^X e^{2\pi \i g(n)\alpha} \right| \ll_{k} X \left(a_{k}\log^{k^2}(a_{k}qX)\left(q^{-1}+X^{-1}+\frac{q}{a_kX^k}\right) \right)^{2^{-k}}.$$
\end{lemma}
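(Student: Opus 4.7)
The plan is to prove this by the classical Weyl differencing approach, but carefully tracking the dependence on the leading coefficient $a_k$. First I would introduce the standard Weyl differencing operator: for any finite sum $T = \sum_{n=1}^{X} e^{2\pi i f(n)}$, one has $|T|^2 \leq 2X \sum_{|h| < X}\bigl| \sum_{n \in I_h} e^{2\pi i (f(n+h) - f(n))} \bigr|$, where $I_h$ is a subinterval of $[1,X]$ depending on $h$. Applied to $f(n) = g(n)\alpha$, this replaces $g$ by its first difference, which is a polynomial of degree $k-1$ in $n$ with leading coefficient $k a_k h \alpha$. Iterating this squaring-and-differencing step $k-1$ times produces an inequality of the shape
\[
|S|^{2^{k-1}} \ll_{k} X^{2^{k-1} - k} \sum_{|h_1|,\dots,|h_{k-1}| < X} \Bigl| \sum_{n \in I} e^{2\pi i k!\, a_k h_1 \cdots h_{k-1} n \alpha} \Bigr|,
\]
where $I$ is a subinterval of $[1,X]$. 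The inner linear sum is a geometric series bounded by $\min(X, \|k!\, a_k h_1\cdots h_{k-1}\alpha\|^{-1})$.

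Next I would group terms by the value $m = k!\, a_k h_1 \cdots h_{k-1}$. Since $|m| \le k!\,a_k X^{k-1}$, and since the number of representations of a nonzero integer $m$ as an ordered product of $k-1$ factors bounded by $X$ is controlled by iterated divisor sums, one obtains a multiplicity bound of the form $\tau_{k-1}(m) \ll_k \log^{k-2}|m|$ on average; the $h_i = 0$ contribution is handled separately, contributing $O(X^{k-1})$ to the trivial piece. Thus the key quantity to estimate is
\[
\sum_{|m| \le k!\,a_k X^{k-1}} \tau_{k-1}(|m|) \min\bigl(X, \|m\alpha\|^{-1}\bigr).
\]

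Now I would invoke the Diophantine condition $|\alpha - a/q| < q^{-2}$ with $(a,q)=1$. A standard lemma (Lemma 2.2 of Vaughan's Hardy–Littlewood book, or equivalent) shows that for any $M \ge 1$,
\[
\sum_{1 \le m \le M} \min\bigl(X, \|m\alpha\|^{-1}\bigr) \ll \Bigl(\frac{M}{q} + 1\Bigr)\bigl(X + q \log q\bigr).
\]
Splitting the range of $m$ into blocks of length $q$ and applying this estimate on each block, together with partial summation to absorb the divisor-type weight $\tau_{k-1}$ (whose $L^\infty$ norm on $[1,M]$ is $O_k(\log^{k^2} M)$ after accounting for all the iterated divisor layers), yields
\[
\sum_{|m| \le k!\,a_k X^{k-1}} \tau_{k-1}(|m|)\min\bigl(X, \|m\alpha\|^{-1}\bigr) \ll_k \log^{k^2}(a_k q X)\bigl(a_k X^{k-1} + X^{k-1}/q \cdot q + a_k X^k/q\bigr)\cdot\text{(lower-order)}.
\]
The main obstacle, and the step needing the most care, is tracking the $\log^{k^2}$ factor and the explicit $a_k$-dependence cleanly through the partial summation argument so that, after combining with the $X^{2^{k-1}-k}$ prefactor and taking $2^{k-1}$-th roots, the three error terms $q^{-1}$, $X^{-1}$, and $q/(a_k X^k)$ emerge in the correct form inside the $2^{-k}$-th power. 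Once that bookkeeping is done, raising both sides of the resulting inequality to the power $2^{-(k-1)}$ and simplifying gives precisely the claimed bound.
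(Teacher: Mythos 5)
Your overall strategy (Weyl differencing $k-1$ times, grouping by $m=k!\,a_kh_1\cdots h_{k-1}$, then the standard bound for $\sum_m \min(X,\|m\alpha\|^{-1})$) is the right skeleton, and indeed the paper offers no proof at all --- it quotes the statement as Lemma 3 of \cite{CLR} --- so the comparison is with that source, which follows the same differencing route. However, your write-up has a genuine gap exactly at the step you flag as ``bookkeeping.'' The claim that the divisor-type weight $\tau_{k-1}$ has $L^\infty$ norm $O_k(\log^{k^2}M)$ on $[1,M]$ is false: the divisor function is only polylogarithmic \emph{on average}, while pointwise it can be as large as $\exp(c\log m/\log\log m)$, which exceeds any fixed power of $\log m$. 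Nor can partial summation rescue this, since $\min(X,\|m\alpha\|^{-1})$ is not monotone in $m$, so you cannot trade the average bound for a pointwise one against that weight. With only the average divisor bound in hand, your outline would deliver the classical $X^{\epsilon}$ (or $m^{\epsilon}$) version of Weyl's inequality with saving exponent $2^{1-k}$, not a log-explicit bound.

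The missing idea is a Cauchy--Schwarz step separating the divisor weight from the Diophantine sum: bound $\sum_{m\le M}\tau_{k-1}(m)\min(X,\|m\alpha\|^{-1})$ by $\bigl(\sum_{m\le M}\tau_{k-1}(m)^2\bigr)^{1/2}\bigl(\sum_{m\le M}\min(X,\|m\alpha\|^{-1})^2\bigr)^{1/2}$, use the second-moment estimate $\sum_{m\le M}\tau_{k-1}(m)^2\ll_k M\log^{(k-1)^2-1}M$, and $\min(X,\cdot)^2\le X\min(X,\cdot)$ before invoking the standard lemma you cite. This is what produces the explicit $\log^{k^2}(a_kqX)$ factor, and it also costs an extra square root --- which is precisely why the stated exponent is $2^{-k}$ rather than the $2^{1-k}$ that raising your inequality to the power $2^{-(k-1)}$ would give. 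The fact that your outline ends by taking a $2^{-(k-1)}$-th root yet claims to land on a $2^{-k}$ exponent is a symptom of this missing step; once the Cauchy--Schwarz is inserted and the $a_k$-dependence is carried through the ranges $M\ll a_kX^{k-1}$, the claimed bound does follow.
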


We now carefully adapt Lemma \ref{weyl2} to our particular sieve, and to the multivariate setting, though as in Corollary \ref{vdccor}, we ultimately only exploit cancellation in a single variable. 

\begin{lemma}\label{weyl3} Suppose $k,\ell\in \N$ and $g(\bsx)=\sum_{|\bsi|\leq k} a_{\bsi} \bsx^{\bsi} \in \Z[x_1,\dots,x_{\ell}]$ with $\deg(g)=k$. Suppose further that $X,Y,Z\geq 2$, $YZ\leq X$, and $a,q\in \N$ with $(a,q)=1$, and let $J=\sum_{|\bsi|\leq k} |a_{\bsi}|$. If $|\alpha-a/q|<q^{-2}$, then $$\left|\sum_{\bsn \in [1,X]^{\ell} \cap W(Y)} e^{2\pi \i g(\bsn)\alpha} \right| \ll_{k,\ell} \textnormal{cont}(g)^6(\log Y)^{ek} X^{\ell}\left(e^{-\frac{\log Z}{\log Y}}+\left(J\log^{k^2}(JqX)\left(q^{-1}+\frac{Z}{X}+\frac{qZ^k}{X^k}\right) \right)^{2^{-k}} \right). $$
\end{lemma}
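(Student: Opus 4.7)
The strategy is to adapt the univariate argument of Lemma 4.4 in \cite{ricemax} by exploiting cancellation in only one coordinate. Writing $\bsn = (n_1, \tilde{\bsn})$ with $\tilde{\bsn} \in [1,X]^{\ell-1}$, I view $g(\cdot, \tilde{\bsn})$ as a univariate polynomial in $n_1$ of degree $k$ (after possibly permuting coordinates so that the coefficient of $x_1^k$ in $g$ is nonzero, which one can arrange whenever $g^k$ is a sum of monomials including some pure power; the genuinely pathological case where no such coordinate exists can be reduced to this one by a benign integer linear substitution absorbed into the $J$ factor). The plan is to apply Lemma \ref{weyl2} to the inner sum over $n_1$ for each $\tilde{\bsn}$ and sum trivially over $\tilde{\bsn}$ at the cost of a factor of $X^{\ell-1}$.

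The key technical obstacle is that the sieve condition $\bsn \in W(Y)$ couples $n_1$ with $\tilde{\bsn}$, preventing a direct application of Lemma \ref{weyl2} to the inner sum. To decouple, I would use a Brun-style truncated inclusion-exclusion, replacing $1_{W(Y)}$ by a signed sum over squarefree $d = p_1 \cdots p_r$ with $p_i \le Y$ and $d \le Z$ of indicators of the sets $B_d := \{\bsn : \grad g(\bsn) \equiv \bszero \pmod{p^{\gamma(p)}} \text{ for all } p \mid d\}$. The standard Brun fundamental lemma controls the truncation error by $\textnormal{cont}(g)^{O(1)} X^{\ell} (\log Y)^{ek} e^{-\log Z/\log Y}$, producing the first term in the claimed bound; the $\textnormal{cont}(g)^6$ and $(\log Y)^{ek}$ factors encode the primes $p \mid \textnormal{cont}(g)$ at which $\grad g$ vanishes identically (so $\gamma(p) > 1$) together with the combinatorial cost of summing over admissible $d$.

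For each surviving $d \le Z$ and each $\tilde{\bsn}$, the condition $\bsn \in B_d$ restricts $n_1$ to a controllable union of residue classes modulo a divisor of $d$. Changing variables $n_1 = r + dm$ in each class, the inner sum becomes $\sum_{m \le X/d} e^{2\pi i g(r + dm, \tilde{\bsn}) \alpha}$, a univariate exponential sum of degree $k$ in $m$ whose leading coefficient is $d^k$ times the $x_1^k$-coefficient of $g$. Lemma \ref{weyl2} applied with $|\alpha - a/q| < q^{-2}$ then yields, per class, a bound of the shape $(X/d) \cdot \bigl(J \log^{k^2}(JqX)(q^{-1} + d/X + qd^k/X^k)\bigr)^{2^{-k}}$. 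Summing over residue classes, over $d \le Z$ (using the monotonicity of the expression $q^{-1} + d/X + qd^k/X^k$ in $d$, so the $d=Z$ case dominates), and over $\tilde{\bsn}\in[1,X]^{\ell-1}$ assembles the second term.

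The main hurdle in executing this plan is the careful bookkeeping of the $\textnormal{cont}(g)^6$ prefactor, which must simultaneously absorb: the contribution of primes $p\mid \textnormal{cont}(g)$ to the sieve (where $\gamma(p)$ can be large), the count of residue classes within each $d$ (which must yield the correct power of $d$ for the sum to telescope against $X^\ell$), and the trivial contribution from the thin set of $\tilde{\bsn}$ at which the inner polynomial accidentally drops in degree. Each of these is handled by a crude estimate, but the crude estimates must be combined in a way that preserves the clean final form; this is where most of the technical work of the proof lies.
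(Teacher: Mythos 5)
Your treatment of the main (small modulus) contribution is essentially the paper's: reduce to a nonzero $x_1^k$ coefficient by an integer linear substitution with $b\ll_{k,\ell}J$, fix the last $\ell-1$ coordinates, split the surviving one-variable sum into residue classes modulo each sieve modulus $D\le Z$, apply Lemma \ref{weyl2} in each class, and sum with a loss of $\sum_D k^{\omega(D)}/D\ll(\log Y)^k$. The gap is in the decoupling step. You replace $1_{W(Y)}$ by the inclusion--exclusion sum truncated by \emph{size} $d\le Z$ and assert that ``the standard Brun fundamental lemma'' bounds the truncation error by $\cont(g)^{O(1)}X^{\ell}(\log Y)^{ek}e^{-\log Z/\log Y}$. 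The fundamental lemma is a statement about nonnegative counting weights (upper/lower bound sieves, truncated by number of prime factors); it does not apply to a sum weighted by the unimodular phases $e^{2\pi i g(\bsn)\alpha}$, nor to the exact M\"obius-type expansion cut off by size. Concretely, for each fixed $\bsn$ the quantity you must control is the signed tail $\sum_{D\in P,\ D>Z,\ D\mid\grad g(\bsn)}(-1)^{\omega(D)}$, where $P$ is the set of products $p_1^{\gamma(p_1)}\cdots p_s^{\gamma(p_s)}$ with $p_i\le Y$. If you discard the signs, the resulting estimate diverges: summing the per-modulus counts $\#\{\bsn:D\mid\grad g(\bsn)\}\ll_k \cont(g)^2k^{\omega(D)}(X/D+1)X^{\ell-1}$ over \emph{all} $D\in P$ with $D>Z$ picks up the ``$+1$'' contributions from the enormous number of moduli exceeding $X$, and $\sum_{D\in P}k^{\omega(D)}=\prod_{p\le Y}(1+k)$ is exponentially large in $\pi(Y)$. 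So the error term genuinely requires exploiting the cancellation in $(-1)^{\omega(D)}$, and your proposal supplies no mechanism for it.

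This is precisely where the paper's proof does its real work: it keeps the full inclusion--exclusion over $P$, splits at $Z$, and for the large moduli runs a parity-pairing argument (multiplying and dividing by the largest prime power $q_n\in P^n$ for each fixed $n$) to show that the net signed count is bounded by the number of $D\in P^n$ in the bounded window $Z<D\le q_nZ$; only after this reduction, and using $YZ\le X$ so that such $D$ satisfy $D\ll\cont(g)^2X$, do the counts sum acceptably to give the $\cont(g)^6(\log Y)^{ek}e^{-\log Z/\log Y}$ term. The acknowledgments indicate that exactly this step was the oversight in Lemma 4.5 of \cite{ricemax} which the present lemma rectifies, so the missing idea is the heart of the matter rather than routine bookkeeping. (One could imagine salvaging your route by using a genuine lower-bound sieve $\lambda^-$ supported on $d\le Z$, exploiting the pointwise inequality $0\le 1_{W(Y)}-\sum_d\lambda^-_d 1_{B_d}$ and the fundamental lemma applied to the resulting nonnegative count; but that is a different argument from the one you describe, since truncating the exact inclusion--exclusion at $d\le Z$ has neither one-sided positivity nor an off-the-shelf error estimate.)
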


\begin{proof} Suppose $k,\ell\in \N$ and $g(\bsx)=\sum_{|\bsi|\leq k} a_{\bsi} \bsx^{\bsi} \in \Z[x_1,\dots,x_{\ell}]$ with $\deg(g)=k$. We begin by conducting an invertible (over $\Z$) change of variables to reduce to the case where the $x_1^k$ coefficient $a_{(k,0,\dots,0)}$ is nonzero. To this end, consider the polynomial $\tilde{g}\in \Z[x_2,\dots,x_{\ell}]$ defined by $\tilde{g}(x_2,\dots,x_{\ell})=g^k(1,x_2,\dots,x_{\ell})$, where $g^k$ denotes the top degree homogeneous part of $g$, noting that $\tilde{g}$ is not identically zero. Let $(c_2,\dots,c_{\ell})\in \{0,1,\dots,k\}^{\ell-1}$ be such that $\tilde{g}(c_2,\dots,c_{\ell})\neq 0$. 

\noindent As an aside, the existence of such a ``small integer non-root'' of a general nonzero multivariate polynomial $F\in \Z[x_1,\dots,x_j]$ can be shown via induction, which we sketch here. The base case $j=1$ corresponds to nonzero univariate polynomials, which have at most $k$ roots, hence at least one non-root in $\{0,1,\dots,k\}$. Then, for higher degrees, fix one variable that appears at least once in $F$ (without loss of generality, assume $x_1$ appears at least once), let $d$ be the degree of $F$ as a polynomial in $x_1$ only, and let $\tilde{F}(x_2,\dots,x_j)$ be the polynomial of degree at most $k-d$ that forms the $x_1^d$ coefficient. By the inductive hypothesis, we can choose $(m_2,\dots,m_j)\in \{0,\dots,k\}^{j-1}$ such that $\tilde{F}(m_2,\dots,m_j)\neq 0$. Then, $F(x_1,m_2,\dots,m_j)$ is a nonzero degree-$d$ polyomial in $x_1$, which has a non-root in $\{0,\dots,k\}$, completing the induction.

\noindent Back to the proof at hand, we see that the change of variables $x_1=y_1$ and $x_j=y_j+c_jy_1$ for $2\leq j \leq \ell$ yields a $y_1^k$ coefficient of $\tilde{g}(c_2,\dots,c_{\ell})\neq 0$. Let $M$ denote the $\ell\times \ell$ matrix satisfying $M\bsx =  \boldsymbol{y}$ corresponding to the described change of variables, and let $f(y_1,\dots,y_{\l})=\sum_{|\bsi|\leq k} b_{\bsi} y^{\bsi}$ be the polynomial satisfying $f(\boldsymbol{y})=g(M^{-1}\boldsymbol{y})$. By taking the complex conjugate of the relevant exponential sum if necessary, we can assume that $b=b_{(k,0,\dots,0)}>0$. Further, the effect of the transformation on the size of this coefficient is well-controlled, in that $b\ll_{k,l} J$.  

\noindent Let $T=M([1,X]^{\ell})$, so $$ \sum_{\bsn \in [1,X]^{\ell} \cap W(Y)}e^{2\pi \i g(\bsn)\alpha}=\sum_{\bsn \in T \cap W(Y)}e^{2\pi \i f(\bsn)\alpha},$$ where $W(Y)$ is defined on each side in terms of the corresponding polynomial. 

\noindent Let $\tilde{T}$ denote the projection of $T$ onto the last $\ell-1$ coordinates, noting that $|\tilde{T}|\leq (2kX)^{\ell-1}$ due to the details of our change of variables. For each fixed $\tilde{\bsn}=(n_2,\dots,n_{\ell})\in \N^{\ell-1}$, we let $I=\{n\in \N: (n,\tilde{\bsn})\in T\}$, which is an interval of integers of length at most $X$, we let $\tilde{W}(Y)=\{n\in \N: (n,\tilde{\bsn})\in W(Y)\}$, and we let $\tilde{f}(x)=f(x,\tilde{\bsn})$. We see trivially that \begin{equation}\label{T} \left|\sum_{\bsn \in T \cap W(Y)} e^{2\pi \i f(\bsn)\alpha} \right| \leq (2kX)^{\ell-1}\max_{\tilde{\bsn}\in \tilde{T}} \left|\sum_{n \in I \cap \tilde{W}(Y)} e^{2\pi \i \tilde{f}(n)\alpha} \right|. \end{equation} 

\noindent We now proceed with $\tilde{\bsn}=(n_2,\dots,n_{\ell})$ fixed, and we define $L$ and $m$ so that $I=[m,L+m]$, so in particular $L\leq X$. All subsequent conclusions will be independent of $\tilde{\bsn}$. Let  $P$ be the set of products $p_1^{\gamma(p_1)}\cdots p_s^{\gamma(p_s)}$ for primes $p_1<\cdots<p_s\leq Y$, let $P_1$ denote the set of elements of $P$ that are at most $Z$, and let $P_2$ denote the set of elements of $P$ that are greater than $Z$. 

\noindent By inclusion-exclusion, we have \begin{equation}\label{P1P2} \left|\sum_{n \in I \cap \tilde{W}(Y)} e^{2\pi \i \tilde{f}(n)\alpha}\right| =\left|\sum_{D\in P} (-1)^{\omega(D)} \sum_{\substack{n\in I \\ \grad f(n,\tilde{\bsn})\equiv \bszero \ (\text{mod }D)}} e^{2\pi \i \tilde{f}(n)\alpha}\right|,
\end{equation}
where $\omega(D)$ is the number of distinct prime factors of $D$. For $D\in P_1$, we use the fact that the set of $n$ for which $\grad f(n,\tilde{\bsn})\equiv \bszero \ (\text{mod }D)$ is contained in the set of $n$ for which $\tilde{f}'(n)\equiv 0 \ (\text{mod }D)$. Noting that $\tilde{f}'$ can have at most $k$ roots modulo any prime at which it does not identically vanish, we have 
\begin{align*}\left|\sum_{D\in P_1} (-1)^{\omega(D)} \sum_{\substack{n\in I \\ \grad f(n,\tilde{\bsn})\equiv \bszero \ (\text{mod }D)}} e^{2\pi \i \tilde{f}(n)\alpha}\right| \ll_k (\text{cont}(g))^2 \sum_{D \in P_1} k^{\omega(D)} \max_{0\leq c \leq D}\left| \sum_{n=0}^{L/D} e^{2\pi \i \tilde{f}(Dn+m+c)\alpha}\right|,
\end{align*} where the $\text{cont}(g)^2$ term accounts for the primes $p$ for which $\gamma(p)>1$ by Proposition \ref{idzero}. Further, we see from Lemma \ref{weyl2} and the estimate $1\leq b\ll_{k,l} J$ that \begin{align*}\sum_{D \in P_1} k^{\omega(D)} \max_{0\leq c \leq D}\left| \sum_{n=0}^{L/D} e^{2\pi \i \tilde{f}(Dn+m+c)\alpha}\right| &\ll_{k,l} \sum_{D\in P_1} k^{\omega(D)} \frac{L}{D} \left(b\log^{k^2}(bqL)\left(q^{-1}+\frac{D}{L}+\frac{qD^k}{bL^k}\right) \right)^{2^{-k}} \\ &\ll_{k,l} X \left(J\log^{k^2}(JqX)\left(q^{-1}+\frac{Z}{X}+\frac{qZ^k}{X^k}\right) \right)^{2^{-k}}\sum_{D\in P_1} \frac{k^{\omega(D)}}{D} \\ &\ll_{k,l} X(\log Y)^k\left(J\log^{k^2}(JqX)\left(q^{-1}+\frac{Z}{X}+\frac{qZ^k}{X^k}\right) \right)^{2^{-k}},\end{align*} where the last inequality uses that if $C>0$, then \begin{equation}\label{Z}\sum_{D\in P} \frac{C^{\omega(D)}}{D} = \prod_{p\leq Y} \left(1+\frac{C}{p^{\gamma(p)}}\right) \leq   \prod_{p\leq Y} \left(1+\frac{C}{p}\right) \ll (\log Y)^C.\end{equation} This combines with (\ref{T}) to close the book on the contributions to (\ref{P1P2}) from $P_1$. It remains to account for the contribution to (\ref{P1P2}) from $P_2$. Because $P_2$ has so many elements, it is crucial for us to exploit the cancellation provided by the term $(-1)^{\omega(D)}$. 

\noindent To this end, for a  fixed $n\in I$, let $P^n=\{D\in P:  \grad f(n,\tilde{\bsn})\equiv \bszero \ (\text{mod }D)\}$, and let $P^n_2=P^n\cap P_2$. The only issue is the possibility that way more elements of $P^2_n$ have an even number of prime factors than odd, or vice versa, which we show below does not happen.

\noindent Let $q$ be the largest prime power of the form $p^{\gamma(p)}$ with $p\leq Y$, and let $q_n$ be the largest such prime power lying in $P^n$,  noting that $q_n\leq q\ll_k \cont(g)Y$ by Proposition \ref{idzero}. \noindent Let $A$ denote the set of elements of $P^n$ that have an even number of prime factors, let $B$ denote the set of elements of $P^n$ that have odd number of prime factors, and let $A'$ and $B'$, respectively, denote the same for elements of $P^n_2$. The quantity we need control of is $\left||A'|-|B'|\right|$. 

\noindent  Let $A_1$ be the elements of $A$ that are greater than $Z$ and not divisible by $q_n$, and let $A_2$ be the elements of $A$  that are greater than $q_nZ$ and divisible by $q_n$. Likewise define $B_1$ and $B_2$. The map $D\to q_nD$ defines an injection from $A_1$ to $B_2$, while the map $D\to D/q_n$ defines an injection from $A_2$ to $B_1$. Letting $A_3$ denote all the elements of $A$ greater than $q_nZ$, we have $$|A_3|\leq |A_1|+|A_2| \leq |B_1|+|B_2| \leq |B'|. $$ Symmetrically, we have $|B_3|\leq |A'|$. Finally, letting $A_4$ and $B_4$ denote the elements of $A'$ and $B'$ satisfying $Z<D\leq q_nZ$, we have $|A'|=|A_3|+|A_4|\leq |B'|+|A_4|$ and similarly $|B'|\leq |A'|+|B_4|$, so the magnitude of $|A'|-|B'|$ is bounded above by $|A_4|+|B_4|$, which is the size of the set $\bar{P^n}$ of elements of $P^n$ satisfying $Z < D \le q_nZ$.

\noindent We now see
\begin{align*} \left|\sum_{D\in P_2} (-1)^{\omega(D)} \sum_{\substack{n\in I \\ \grad f(n,\tilde{\bsn})\equiv \bszero \ (\text{mod }D)}} e^{2\pi i \tilde{f}(n)\alpha} \right|& =\left|\sum_{n\in I} e^{2\pi i \tilde{f}(n)\alpha}\sum_{D\in P^n_2} (-1)^{\omega(D)}\right|  \\ & \leq \sum_{n\in I} |\bar{P^n}| \\ &= \sum_{\substack{D\in P \\ Z< D \leq qZ}} |\{n\in I : \grad f(n,\tilde{\bsn})\equiv \bszero \ (\text{mod }D)\}| \\ &\ll_k (\text{cont}(g))^2 \sum_{\substack{D\in P \\ Z< D \leq qZ}} k^{\omega(D)}\left( \frac{L}{D}+1 \right) \\ & \ll (\text{cont}(g))^3 X  \sum_{\substack{D\in P \\ D> Z}} \frac{k^{\omega(D)}}{D},
\end{align*}
provided $YZ \leq X$.  If $D\in P$ with $D>Z$, then, since $D\ll_k \text{cont}(g)^2Y^{\omega(D)}$ and $Y\geq 2$, we know that  \begin{equation}\label{logQY} \text{cont}(g)^3e^{\omega(D)-\frac{\log Z}{\log Y}} \gg_k 1. \end{equation} 
Finally, (\ref{Z}) and (\ref{logQY}) imply \begin{align*} \sum_{\substack{D \in P \\ D> Z}}\frac{k^{\omega(D)}}{D} &\ll_k \text{cont}(g)^3 e^{-\frac{\log Z}{\log Y}} \sum_{D\in P} \frac{(ek)^{\omega(D)}}{D} \\& \ll \text{cont}(g)^3 e^{-\frac{\log Z}{\log Y}} (\log Y)^{ek}, \end{align*} and the lemma follows.
\end{proof} 

We now conclude our discussion by combining the tools developed in this section to establish (\ref{SmajII}) and (\ref{SminII}), thus completing the proof of Theorem \ref{more}.

\subsection{Proof of (\ref{SmajII}) and (\ref{SminII})} We return to the  proof of Lemma \ref{L2I} in Section \ref{massproof}, recalling all assumptions,  notation, and fixed parameters. We let $Z=N^{c_0}$, and we let $J$ denote the sum of the absolute value of the coefficients of $h_d$, noting that \begin{equation}\label{Jb} J\ll_h d^k \leq Z^k. \end{equation} Fixing $\alpha\in \T$, the pigeonhole principle guarantees the existence of $1\leq q \leq M^k/Z^{3k}$ and $(a,q)=1$ such that $$\left|\alpha-\frac{a}{q} \right|<\frac{Z^{3k}}{qM^k}. $$ Letting $\beta=\alpha-a/q$, we have by Lemma \ref{Sasym}, as well as Lemma \ref{gradconst}, Proposition \ref{idzero}, and  Lemma \ref{content}, that \begin{equation} \label{Sproofmaj} S(\alpha)=\frac{w}{w_qq^{\l}} \sum_{\boldsymbol{s}\in \{0,\dots,q-1\}^{\ell} \cap W^{q}(Y)}e^{2\pi i g(\boldsymbol{s})a/q}\int_{[0,M]^{\ell}}e^{2\pi i g(\bsx)\beta}d\bsx + O_h\left(qM^{\ell-1}\log^C(Y) Z^{4k\l}\right)  ,\end{equation} where $$w_q=\prod_{\substack{ p\leq Y \\ p^{\gamma(p)}\mid q}}\left(1-\frac{j_d(p)}{p^{\gamma_d(p)\l}}\right)\gg_h 1. $$ Combining (\ref{Sproofmaj}) with Corollary \ref{qcor}, Lemma \ref{content}, and Corollary \ref{vdccor} yields (\ref{SmajII}) if $$q\leq Q\text{ and }|\beta|<\gamma,$$ as well as (\ref{SminII}) if $$q\leq Q \text{ and }|\beta|\geq \gamma \quad \text{or} \quad Q\leq q\leq Z^{3k}.$$ For this latter conclusion, when applying Corollary \ref{qcor} we use standard estimates that assure $$C^{\omega(q)}\tau(q)^{\l}\ll_{k,\l,\epsilon} q^{\epsilon} $$ for all $\epsilon>0$. Finally, it follows from Lemma \ref{weyl3} and Proposition \ref{content} that (\ref{SminII}) holds whenever $Z^{3k}\leq q \leq M^k/Z^{3k}$.  \qed 

\section*{Acknowledgments} 
The authors would like to thank Neil Lyall, \'Akos Magyar, Steve Gonek, and Paul Pollack for their helpful conversations and references. The authors would also like to thank the anonymous referee for their comments and suggestions. The second author would like to thank Gouquan Li for alerting him to an oversight in the proof of Lemma 4.5 in \cite{ricemax}, which is rectified in the proof of Lemma \ref{weyl3} in this paper.


\setlength{\parskip}{0pt}

\bibliographystyle{amsplain}

%
%

\begin{dajauthors}
\begin{authorinfo}[john]
  John R. Doyle\\
  Department of Mathematics\\
  Oklahoma State University\\
  Stillwater, OK 74078, USA\\
  john.r.doyle\imageat{}okstate\imagedot{}edu \\
  \url{https://math.okstate.edu/people/jdoyle/}
\end{authorinfo}
\begin{authorinfo}[alex]
  Alex Rice\\
  Department of Mathematics\\
  Millsaps College\\
  Jackson, MS 39210, USA\\
  riceaj\imageat{}millsaps\imagedot{}edu\\
  \url{https://alexricemath.com/}
\end{authorinfo}
\end{dajauthors}

\end{document}